\documentclass[12pt]{amsart}
\usepackage{mathrsfs}
\usepackage{amsmath}
\usepackage{amsfonts}
\usepackage{amssymb}
\usepackage[all]{xy}           
\usepackage{bm}
\usepackage{bbding}
\usepackage{txfonts}
\usepackage{amscd}
\usepackage{xspace}
\usepackage[shortlabels]{enumitem}
\usepackage{ifpdf}

\ifpdf
  \usepackage[colorlinks,final,backref=page,hyperindex]{hyperref}
\else
  \usepackage[colorlinks,final,backref=page,hyperindex,hypertex]{hyperref}
\fi
\usepackage{tikz}
\usepackage[active]{srcltx}

\topmargin -.8cm \textheight 22.8cm \oddsidemargin 0cm \evensidemargin -0cm \textwidth 16.3cm

\makeatletter

\newtheorem{thm}{Theorem}[section]
\newtheorem{lem}[thm]{Lemma}
\newtheorem{cor}[thm]{Corollary}
\newtheorem{pro}[thm]{Proposition}
\newtheorem{ex}[thm]{Example}
\theoremstyle{definition}
\newtheorem{rmk}[thm]{Remark}
\newtheorem{defi}[thm]{Definition}

\newcommand{\nc}{\newcommand}
\newcommand{\delete}[1]{}

\nc{\mlabel}[1]{\label{#1}}  
\nc{\mcite}[1]{\cite{#1}}  
\nc{\mref}[1]{\ref{#1}}  
\nc{\mbibitem}[1]{\bibitem{#1}} 

\delete{
\nc{\mlabel}[1]{\label{#1}{\hfill \hspace{1cm}{\bf{{\ }\hfill(#1)}}}}
\nc{\mcite}[1]{\cite{#1}{{\em{{\ }(#1)}}}}  
\nc{\mref}[1]{\ref{#1}{{\em{{\ }(#1)}}}}  
\nc{\mbibitem}[1]{\bibitem[\em #1]{#1}} 
}

\setlength{\baselineskip}{1.8\baselineskip}

\newcommand {\emptycomment}[1]{}

\nc{\oprn}{\theta}
\nc{\Oprn}{\Theta}

\nc{\calo}{\mathcal{O}}
\nc{\oop}{$\mathcal{O}$-operator\xspace}
\nc{\oops}{$\mathcal{O}$-operators\xspace}
\nc{\mrho}{{\bm{\varrho}}}
\nc{\emk}{\mathbf{K}}
\nc{\invlim}{\displaystyle{\lim_{\longleftarrow}}\,}
\nc{\ot}{\otimes}

\newcommand{\lon }{\,\rightarrow\,}
\newcommand{\be }{\begin{equation}}
\newcommand{\ee }{\end{equation}}

\newcommand{\g}{\mathfrak g}
\newcommand{\h}{\mathfrak h}

\newcommand{\huaL}{\mathcal{L}}
\newcommand{\huaR}{\mathcal{R}}

\newcommand{\huaG}{\mathcal{G}}

\newcommand{\huaX}{\mathcal{X}}
\newcommand{\huaY}{\mathcal{Y}}

\newcommand{\huaD}{\mathcal{D}}

\newcommand{\huaO}{{\mathcal{O}}}

\newcommand{\frkg}{\mathfrak g}

\newcommand{\frkL}{\mathfrak L}

\newcommand{\frkR}{\mathfrak R}

\newcommand{\half}{\frac{1}{2}}

\newcommand{\Courant}[1]{\left\llbracket  #1\right\rrbracket }

\newcommand{\Id}{{\rm{Id}}}

\newcommand{\br}[1]{   [ \cdot,    \cdot  ]   }

\newcommand{\Hom}{\mathrm{Hom}}

\newcommand{\gl}{\mathfrak {gl}}

\newcommand{\de}{\mathrm{d}}

\nc{\CV}{\mathbf{C}}

\begin{document}

\title{Twisting of Lie triple systems, $L_\infty$-algebras, and (generalized) matched pairs}

\author{Jia Zhao}
\address{Jia Zhao, School of Mathematics and Statistics, Nantong University, Nantng 226019, Jiangsu, China}
\email{zhaojia@ntu.edu.cn}

\author{Haobo Xia*}
\address{Haobo Xia (corresponding author), Department of Mathematics, Jilin University, Changchun 130012, Jilin, China}
\email{xiahb21@mails.jlu.edu.cn}

\date{\today}

\begin{abstract}
In this paper, we introduce notions of  (proto-, quasi-)twilled Lie triple systems and give their equivalent descriptions using the controlling algebra and bidegree convention. Then we construct an $L_\infty$-algebra via a twilled Lie triple system. Besides, we establish the twisting theory of Lie triple systems and then characterize the twisting as a Maurer-Cartan element in the constructed $L_\infty$-algebra. Finally, we clarify the relationship between twilled  Lie triple systems and matched pairs and clarify the relationship between twilled Lie triple systems and relative Rota-Baxter operators respectively so that we obtain the relationship between matched pairs of Lie triple systems and relative Rota-Baxter operators.
\end{abstract}


\keywords{twilled Lie triple system, $L_\infty$-algebra, twisting, matched pair, realtive Rota-Baxter operator}

\maketitle

\vspace{-1.1cm}

\tableofcontents

\allowdisplaybreaks

 \section{Introduction}
Lie triple systems can be traced back to Cartan's study on symmetric spaces \cite{Cartan}. It was abstracted as an algebraic object and was named as Lie triple systems by Jacobson \cite{Jacobson}, and then representation of Lie triple systems was defined by Hodge and Parshall \cite{Hodge}. Indeed, Leibniz algebras and Nambu algebras are related with Lie triple systems closely and Lie triple systems play an important role in Lie theory. The relationship between Lie triple systems and other algebraic structures is reflected in the following two aspects: on the one hand, a Lie triple system is a special Nambu algebra; on the other hand, the space of fundamental objects of Lie triple systems is endowed with a Leibniz algebraic structure. Due to the importance of Lie triple systems, Lister established a structure theory of Lie triple systems \cite{Lister}. Moreover, applications of Lie triple systems on numerical analysis of differential equations were studied in \cite{Kaas} and that $T^*$-extension of Lie triple systems is compatible with nilpotency and solvability was examined in \cite{Deng}.

Motivated by studies on (quasi-)Lie bialgebras and (quasi-)Hopf algebras, Drinfeld introduced an operation called the twisting in \cite{Drinfeld}. One provides a method to study Manin triples via the twisting operations. Twisting operations play an important role in the context of bialgebra theory and Poisson geometry, see \cite{Kosmann1,Kosmann2,Roytenberg1,Roytenberg2} for more details. It is well known that a graded commutative algebra $\wedge^\bullet (V\oplus V^*)$ is equipped with a Poisson bracket $\{\cdot,\cdot\}$ defined to be $\{v,v'\}=\{\epsilon,\epsilon'\}=0$ and $\{v,\epsilon\}=\langle v,\epsilon\rangle$ for any $v,v'\in V$ and $\epsilon,\epsilon'\in V^*$. Then a Lie algebra structure on $V\oplus V^*$ is an element $\Theta$ in $\wedge^3 (V\oplus V^*)$ such that $\{\Theta,\Theta\}=0$. Moreover, the structure $\Theta$ has a close connection with Lie bialgebra structures. A Lie bialgebra structure is  pair $(\mu,\nu)$, such that $\Theta:=\mu+\nu$ is a Lie algebra structure on $V\oplus V^*$, where $\mu\in (\wedge^2V^*)\ot V$ and $\nu\in V^*\ot(\wedge^2V)$. If $(\mu,\nu)$ forms a Lie bialgebra structure on $V$, then $(V\oplus V^*,\mu+\nu)$ is called a Drinfeld double. Suppose that $r\in \wedge^2V$, then by definition, the twisting of a structure $\Theta$ by $r$ is a transformation
$$\Theta^r:=\exp(X_r)(\Theta),$$
where $X_r$ is a Hamiltonian vector field $X_r:=\{\cdot,r\}$. The relationship between twisting and the classical Yang-Baxter equation is reinforced in the sequel. Let $(\mu,\nu=0)$ be a Lie bialgebra on the vector space $V$, then the Drinfeld double is the space $V\oplus V^*$ with the structure $\Theta:=\mu$. If $r$ is a solution to the classical Yang-Baxter equation
$$[r,r]=0,$$
then a pair $(\mu,\{\mu,r\})$ is a Lie bialgebra structure and the double $\mu+\{\mu,r\}=\Theta^r$, where $[r,r]:=\{\{\mu,r\},r\}$. Generally, Lie 2-(co-, bi-)algebra structures are characterized as Maurer-Cartan elements in a graded Poisson algebra $(S^\bullet(V[2]\oplus V^*[1]),\{\cdot,\cdot\})$ in \cite{CSX}.

Thanks to our study on Lie-Yamaguti bialgebra theory in \cite{ZQ}, we obtain the bialgebra theory of Lie triple systems naturally. In order to establish a comprehensive theoretical system for Lie triple system bialgebra, the aim of this paper is to build the twisting theoty of Lie triple systems. We introduce the notions of (proto-, quasi-)twilled Lie triple systems and then we construct an $L_\infty$-algebra  from a twilled Lie triple system. Consequently, we characterize the twisting as a Maurer-Cartan element of the constructed $L_\infty$-algebra. Finally, we introduce notions of matched pairs and generalized matched pairs of Lie triple systems to clarify the relationship between (generalized) matched pairs and relative Rota-Baxter operators. Similar to the case of $3$-Lie algebras, the reason why a solution to the classical Yang-Baxter equation (treated as a special relative Rota-Baxter operator) can {\em not} give rise to a double Lie triple system bialgebra is that a relative Rota-Baxter operator does {\em not} corresponds to a usual matched pair of Lie triple systems, but corresponds to a {\em generalized matched pair}. Relations among twilled Lie triple systems (twilled LTSs for short in the diagram), relative Rota-Baxter operators (relative RB-operators for short in the diagram) and matched pairs of Lie triple systems can be shown in the following diagram:
\[
\xymatrix{
\fbox{mathched pairs} \ar[r]^{\text{Theorem \ref{strict}}} &\fbox{strict twilled LTS s}\ar[l] & \\
\fbox{generalized matched pairs} \ar[r]^{\quad\qquad\text{Theorem \ref{usual}}} & \quad\fbox{twilled LTS s} \ar[l] \ar[r]^{\text{Theorem \ref{Otwisting}}} & \qquad\fbox{relative RB-operators} \ar[l]
}
\]
The conclusions shown in the above diagram indicate that generalized matched pairs of Lie triple systems may lead to generalized bialgebra theory for Lie triple systems and we expect new studies in this direction. Besides, one can read \cite{HST,T.S2,Uh1} for more details about twisting theory of associative algebras, Leibniz algebras and $3$-Lie algebras, and see \cite{BCM2,B.G.S} for bialgebra theories for left-symmetric algebras and 3-Lie algebras.

 In this paper, all the vector spaces are over $\mathbb{K}$, a field of characteristic $0$.

\section{Preliminaries: Lie triple systems and their controlling algebras}
In this section, we first recall some basic notions such as Lie triple systems, representations and their controlling algebras.

\begin{defi}\cite{Jacobson}\label{LY}
A {\bf Lie triple system} is a vector space $\g$, together with a trilinear bracket $\Courant{\cdot,\cdot,\cdot}:\wedge^2\g \otimes  \mathfrak{g} \to \mathfrak{g} $ such that the following equations are satisfied for all $x,y,z,w,t \in \g$,
\begin{eqnarray}
~ &&\label{LY1}\Courant{x,y,z}+\Courant{y,z,x}+\Courant{z,x,y}=0,\\
~ &&\Courant{x,y,\Courant{z,w,t}}=\Courant{\Courant{x,y,z},w,t}+\Courant{z,\Courant{x,y,w},t}+\Courant{z,w,\Courant{x,y,t}}.\label{fundamental}
\end{eqnarray}
Here, Eq. \eqref{fundamental} is called the {\bf fundamental identity}. We denote a Lie triple system by a pair $(\g,\Courant{\cdot,\cdot,\cdot})$.
\end{defi}

\begin{rmk}
A Lie triple system structure induces a Leibniz algebra structure on its tensor space. Let $(\g,\Courant{\cdot,\cdot,\cdot})$ be a Lie triple system. Define an operation $[\cdot,\cdot]_{\mathsf F}$ on $\ot^2\g$ to be
$$[\huaX,\huaY]_{\mathsf F}:=\Courant{x_1,x_2,y_1}\ot y_2+y_1\ot\Courant{x_1,x_2,y_2},\quad \forall \huaX=x_1\ot x_2,~\huaY=y_1\ot y_2\in \ot^2\g,$$
then $(\ot^2\g,[\cdot,\cdot]_{\mathsf F})$ is a Leibniz algebra. Elements in $\ot^2\g$ are called the {\bf fundamental objects}.
\end{rmk}

\begin{ex}
Let $(\frkg,[\cdot,\cdot])$ be a Lie algebra. We define $\Courant{\cdot,\cdot,\cdot
 }:\wedge^2\g\otimes \g\lon \g$ by  $$\Courant{x,y,z}:=[[x,y],z],\quad \forall x,y, z \in \mathfrak{g}.$$  Then $(\g,\Courant{\cdot,\cdot,\cdot})$ becomes a Lie triple system naturally.
\end{ex}

\begin{defi}\cite{Yamaguti3}
Let $(\g,\Courant{\cdot,\cdot,\cdot})$ be a Lie triple system and $V$ a vector space. A {\bf representation of $\g$ on $V$} consists of a bilinear map $\rho:\otimes^2 \g \to \gl(V)$ such that for all $x,y,z,w \in \g$,
\begin{eqnarray}
~&&\label{RYT4}\rho(z,w)\rho(x,y)-\rho(y,w)\rho(x,z)-\rho(x,\Courant{y,z,w})+D_\rho(y,z)\rho(x,w)=0,\\
~&&\label{RLY5}\rho(\Courant{x,y,z},w)+\rho(z,\Courant{x,y,w})=[D_\rho(x,y),\rho(z,w)],
\end{eqnarray}
where the bilinear map $D_\rho:\otimes^2\g \to \gl(V)$ is given by
\begin{eqnarray}\label{rep}
D_\rho(x,y):=\rho(y,x)-\rho(x,y), \quad \forall x,y \in \g.
\end{eqnarray}
It is obvious that $D_\rho$ is skew-symmetric and we write $D$ instead of $D_\rho$ without ambiguities in the sequel. We denote a representation of $\g$ on $V$ by $(V;\rho)$.
\end{defi}
\emptycomment{
\begin{rmk}
By \eqref{RLY5} and \eqref{rep}, one can by a direct computation deduce that
\begin{eqnarray}
[D(x,y),D(z,w)]=D(\Courant{x,y,z},w)+D(z,\Courant{x,y,w}), \quad \forall x,y,z,w \in \g.\label{rep3}
\end{eqnarray}
\end{rmk}
}

\begin{ex}\label{ad}
Let $(\g,[\cdot,\cdot,\cdot])$ be a Lie triple system. We define $\frkR :\otimes^2\g \to \gl(\g)$ by $(x,y) \mapsto \frkR_{x,y}$ , where $\frkR_{x,y}z=\Courant{z,x,y}$ for all $z \in \g$. Then $(\g;\frkR)$ forms a representation of $\g$ on itself, called the {\bf adjoint representation}. By \eqref{LY1}, $\frkL\triangleq D_\frkR=\frkR_{y,x}-\frkR_{x,y}$ is given by for all $x,y \in \g$,
\begin{eqnarray*}
\frkL_{x,y}z=\Courant{x,y,z}, \quad \forall z \in \g.\label{lef}
\end{eqnarray*}
\end{ex}

By a direct calculation, we have the following
\begin{pro}\label{semidirect}
Let $(\g,\Courant{\cdot,\cdot,\cdot})$ be a Lie triple system and $V$ a vector space. Let $\rho:\otimes^2 \g \to \gl(V)$ be a linear map. Then $(V;\rho)$ is a representation of $(\g,\Courant{\cdot,\cdot,\cdot})$ if and only if there is a Lie triple system structure $\Courant{\cdot,\cdot,\cdot}_{\rho}$ on the direct sum $\g \oplus V$ which is defined by for all $x,y,z \in \g, u,v,w \in V$,
\begin{eqnarray*}
~\Courant{x+u,y+v,z+w}_{\rho}&=&\Courant{x,y,z}+D(x,y)w+\rho(y,z)u-\rho(x,z)v,
\end{eqnarray*}
where $D$ is given by \eqref{rep}.
This Lie triple system $(\g \oplus V,\Courant{\cdot,\cdot,\cdot}_{\mu})$ is called the {\bf semidirect product Lie triple system}, denoted by $\g \ltimes_{\rho} V$, or simply by $\g \ltimes V$.
\end{pro}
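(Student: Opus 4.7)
The plan is to verify both axioms of a Lie triple system for the candidate bracket $\Courant{\cdot,\cdot,\cdot}_\rho$ and to observe that each axiom, when decomposed according to whether the inputs lie in $\g$ or in $V$, is equivalent to a combination of the defining axioms of $\g$ and the representation axioms \eqref{RYT4} and \eqref{RLY5}. Since everything in sight is trilinear, both directions of the equivalence will follow from the same computation; the proof is really one calculation read in two ways.

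First I would record the obvious fact that $\Courant{\cdot,\cdot,\cdot}_\rho$ is skew-symmetric in its first two arguments: the $\g$-component is skew by assumption, and the $V$-component is skew because $D(x,y)=\rho(y,x)-\rho(x,y)$ is skew in $(x,y)$ while the terms $\rho(y,z)u$ and $-\rho(x,z)v$ exchange sign under $(x,u)\leftrightarrow(y,v)$. Next I would check identity \eqref{LY1} for $\Courant{\cdot,\cdot,\cdot}_\rho$; the $\g$-component is just \eqref{LY1} for $\g$, and the $V$-component of $\Courant{x+u,y+v,z+w}_\rho+\Courant{y+v,z+w,x+u}_\rho+\Courant{z+w,x+u,y+v}_\rho$ reduces, via the identity $D(x,y)=\rho(y,x)-\rho(x,y)$, to a trivial cancellation of the six $\rho$-terms attached to each of $u$, $v$, $w$. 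Thus \eqref{LY1} for $\Courant{\cdot,\cdot,\cdot}_\rho$ is automatic and imposes no extra condition on $\rho$.

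The core of the argument is the fundamental identity \eqref{fundamental} applied to inputs $X_i=x_i+u_i$ with $x_i\in\g$, $u_i\in V$, $i=1,\dots,5$. Here I would exploit the key observation that in the semidirect bracket, the $V$-summand of the output is linear in the $V$-parts of the inputs, so when two or more inputs lie in $V$ the bracket vanishes. By multilinearity it therefore suffices to treat the fundamental identity case by case according to how many of $X_1,\dots,X_5$ lie in $V$: the all-in-$\g$ case is \eqref{fundamental} for $\g$; the cases with two or more inputs in $V$ are trivially $0=0$; and the five cases where exactly one input lies in $V$ produce exactly the representation axioms. Specifically, placing the $V$-input in the last slot $X_5=u\in V$ yields \eqref{RLY5} for the pair $((x_1,x_2),(x_3,x_4))$ acting on $u$, while placing it in the third slot $X_3=u\in V$ (and rearranging via skew-symmetry of the first two arguments, once the $X_4$-or-$X_5$ slot has been fixed) yields \eqref{RYT4}. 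The remaining three placements reduce to these two by the already-verified skew-symmetry and identity \eqref{LY1}.

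The only real bookkeeping obstacle is in the last paragraph: matching each of the five placements to the two representation axioms. This is resolved by noting that \eqref{RLY5} is the natural shape of the identity when the $V$-input is acted on as the last entry of the outer bracket (so $D$ appears), whereas \eqref{RYT4} is the shape when the $V$-input is acted on as the first entry (so only $\rho$'s appear). Granting that identification, the proof concludes: the ``only if'' direction is the observation that \eqref{RYT4}--\eqref{RLY5} hold by hypothesis, so the fundamental identity for $\Courant{\cdot,\cdot,\cdot}_\rho$ holds, and combined with the preceding two paragraphs $(\g\oplus V,\Courant{\cdot,\cdot,\cdot}_\rho)$ is a Lie triple system; the ``if'' direction is obtained by specializing the $V$-component of the fundamental identity to the relevant five placements and reading off \eqref{RYT4} and \eqref{RLY5}.
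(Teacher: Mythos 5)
Your proposal is correct in structure and is precisely the ``direct calculation'' that the paper leaves unwritten (the paper offers no proof beyond that phrase), so there is no alternative route to compare against: reduce the fundamental identity by multilinearity to the cases of zero or one $V$-input, the former giving \eqref{fundamental} for $\g$ and the latter giving the representation axioms. One caveat on your bookkeeping, though: carrying out the five placements explicitly, the $V$-input in slot $1$ or $2$ (replacing $x$ or $y$) is what produces \eqref{RYT4}; the $V$-input in slot $3$ or $4$ produces \eqref{RLY5} verbatim; and the $V$-input in slot $5$ produces only $[D(x,y),D(z,w)]=D(\Courant{x,y,z},w)+D(z,\Courant{x,y,w})$, which is a consequence of \eqref{RLY5} and \eqref{rep} rather than \eqref{RLY5} itself. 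So your assignment (slot $5\mapsto$\eqref{RLY5}, slot $3\mapsto$\eqref{RYT4}) is off, and the claim that the remaining placements reduce to those two ``via \eqref{LY1}'' is not how it actually works out; but since both axioms are still recovered from the five cases collectively, the equivalence --- and hence the proof --- goes through.
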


\emptycomment{
Let $(V;\rho)$ be a representation of a Lie triple system $(\g,\Courant{\cdot,\cdot,\cdot})$. The $n+1$-cochain consists of the elements $f$ in
\begin{eqnarray*}
C^{n}(\g;V):=\Hom(\otimes^{2n+1} \g,V)=\Hom(\underbrace{(\ot^2\g)\ot\cdots\ot(\ot^2\g))}_n\ot\g,V), \quad n \geqslant 0.
\end{eqnarray*}
If $n\geqslant 1,$ $f \in C^{n}(\g, V)$ satisfies
\begin{eqnarray}
f(\huaX_1,\cdots ,\huaX_{n-1},x_n,y_n,z)+f(\huaX_1,\cdots,\huaX_{n-1} ,y_n,z,x_n)+f(\huaX_1,\cdots,\huaX_{n-1}, z,x_n,y_n)=0,
\end{eqnarray}
for all $\huaX_i=x_i\ot y_i\in\ot^2\g, \quad 1 \leqslant i \leqslant n.$

For any $f \in C^n(\g, V),~ (n\geqslant 0)$, the coboundary map $\de :C^n(\g;V)\to C^{n+1}(\g;V)$ is defined by
\begin{eqnarray*}
~ &&(\de f)(\huaX_1,\cdots,\huaX_n,z)\\
~ &=&(-1)^n\big(\rho(y_{n},z)f(\huaX_1,\cdots,\huaX_{n-1},x_n)-\rho(x_n,z)f(\huaX_1,\cdots,\huaX_{n-1},y_n)\big)\\
~ &&+\sum_{i=1}^n (-1)^{i+1}D(\huaX_i)f(\huaX_1,\cdots,\hat{\huaX_{i}},\cdots,\huaX_n,z)\\
~ &&+\sum_{i=1}^n (-1)^{i}f(\huaX_1,\cdots,\hat{\huaX_{i}},\cdots,\huaX_n,\Courant{x_i,y_i,z})\\
~ &&+\sum_{j<k} (-1)^j f(\huaX_1,\cdots,\hat{\huaX_{j}},\cdots,\huaX_{k-1},[\huaX_j,\huaX_k]_F,\huaX_{k+1},\cdots,\huaX_n,z),
\end{eqnarray*}
for all $\huaX_i=x_i\ot y_i\in\ot^2\g,~1=1,2,\cdots,n$ and $z\in \g$. Yamaguti showed that $\de^2=0$ and thus $(\oplus_{n=1}^{\infty}(\g;V),\de)$ is a cochain complex.}

In the following, we recall the controlling algebras of Lie triple systems. Let $\g$ be a vector space and $C^*(\g,\g)=\oplus_{p\geqslant0}C^p(\g,\g)$, where $C^p(\g,\g)=\Hom(\otimes^{2p+1}\g,\g)$ and degrees of elements in $C^p(\g,\g)$ are assumed to be $p$. For all $P\in C^p(\g,\g)$ and $Q\in C^q(\g,\g)$, define a graded bracket (called the {\bf Nambu bracket}) $[\cdot,\cdot]_{\mathsf{N}}$
to be
$$[P,Q]_{\mathsf{N}}=P\circ Q-(-1)^{pq}Q\circ P,$$
where $P\circ Q\in C^{p+q}(\g,\g)$ is defined by
{\footnotesize
\begin{eqnarray}
~ &&\nonumber P\circ Q(\huaX_1,\cdots,\huaX_{p+q},x)\\
~ \nonumber&=&\sum_{k=1}^p (-1)^{(k-1)q}(-1)^\sigma\sum_{\sigma\in \mathbb{S}(k-1,q)}P\Big(\huaX_{\sigma(1)},\cdots,\huaX_{\sigma(k-1)},Q\big(\huaX_{\sigma(k)},\cdots,\huaX_{\sigma(k+q-1)},x_{k+q}\big)\otimes y_{k+q}, \huaX_{k+q+1},\cdots,\huaX_{p+q},x\Big)\\
~ &&+\sum_{k=1}^p (-1)^{(k-1)q}(-1)^\sigma\sum_{\sigma\in \mathbb{S}(k-1,q)}P\Big(\huaX_{\sigma(1)},\cdots,\huaX_{\sigma(k-1)},x_{k+q}\otimes Q\big(\huaX_{\sigma(k)},\cdots,\huaX_{\sigma(k+q-1)},y_{k+q}\big), \huaX_{k+q+1},\cdots,\huaX_{p+q},x\Big)\label{control}\\
~ &&\nonumber+\sum_{\sigma\in \mathbb{S}(p,q)}(-1)^\sigma P\Big(\huaX_{\sigma(1)},\cdots,\huaX_{\sigma(p)},Q\big(\huaX_{\sigma(p+1)},\cdots,\huaX_{\sigma(p+q)},x\big)\Big),
\end{eqnarray}}
for all $\huaX_i\in \otimes^2\g,~i=1,2,\cdots,p+q$ and $x\in \g$.

Then $(C^*(\g,\g),[\cdot,\cdot]_{\mathsf{N}})$ is a graded Lie algebra and its Maurer-Cartan elements corresponds to Nambu algebra structures \cite{Rot}. Let $C^*_{\mathsf{LTS}}(\g,\g)=\oplus_{p\geqslant0}C_{\mathsf{LTS}}^p(\g,\g)=\oplus_{p\geqslant0}\Hom_{\mathsf{LTS}}^p(\ot^{2p+1}\g,\g)$ be a graded subspace of $C^*(\g,\g)$ such that for any $P\in C^p_{\mathsf{LTS}}(\g,\g)$, $P$ satisfies
\begin{eqnarray*}
P(\huaX_1,\cdots,\huaX_{p-1},x,x,y)&=&0,\\
~P(\huaX_1,\cdots,\huaX_{p-1},x,y,z)+P(\huaX_1,\cdots,\huaX_{p-1},y,z,x)+P(\huaX_1,\cdots,\huaX_{p-1},z,x,y)&=&0.
\end{eqnarray*}
The corresponding Nambu bracket is denoted by $[\cdot,\cdot]_{\mathsf{LTS}}$, when the graded vector space is restricted  to $C^*_{\mathsf{LTS}}(\g,\g)=\oplus_{p\geqslant0}C_{\mathsf{LTS}}^p(\g,\g)$. Then $(C^*_{\mathsf{LTS}}(\g,\g)=\oplus_{p\geqslant0}C_{\mathsf{LTS}}^p(\g,\g),[\cdot,\cdot]_{\mathsf{LTS}})$ is a graded subalgebra and its Maurer-Cartan elements corresponds to Lie triple system structures \cite{XST}. In the present paper, we always consider the graded Lie algebra $(C^*_{\mathsf{LTS}}(\g,\g),[\cdot,\cdot]_{\mathsf{LTS}})$.

\section{(Proto-, quasi-)twilled Lie triple systems}
In this section, we introduce notions of (proto-, quasi-)twilled Lie triple systems, and use the graded Lie algebra bracket in the controlling algebra to give structures of (proto-, quasi-)twilled Lie triple systems. For this purpose, we have to examine the lift of a given linear map and its bidegree first.
\subsection{Lift and bidegree}
Let $\g_1$ and $\g_2$ be vector spaces, and elements in $\g_1$ are denoted by $x,y,z,x_i$ and those in $\g_2$ by $u,v,w,u_i$. We denote by $\g^{l,k}$ the direct sum of all $(l+k)$-tensor power of $\g_1$ and $\g_2$: $\otimes^{n}(\g_1\oplus\g_2)$, where $n=l+k$, and $l$ (resp. $k$) means the quantities of $\g_1$ (resp. $\g_2$). For example, $\g^{2,1}\subset \ot^3(\g_1\oplus\g_2)$ can be written as
$$\g^{2,1}=(\g_1\ot\g_1\ot\g_2)\oplus(\g_1\ot\g_2\ot\g_1)\oplus(\g_2\ot\g_1\ot\g_1).$$
Then $\displaystyle{\otimes^{n}(\g_1\oplus\g_2)=\oplus_{l+k=n}\g^{l,k}}$. For example,
$$\ot^3(\g_1\oplus\g_2)=\g^{3,0}\oplus\g^{2,1}\oplus\g^{1,2}\oplus\g^{0,3}.$$
By $\Hom$-functor, we have
\begin{eqnarray}
C^p_{\mathsf{LTS}}(\g_1\oplus\g_2,\g_1\oplus\g_2)=\bigoplus_{l+k=2p}C^p_{\mathsf{LTS}}(\g^{l,k},\g_1)\oplus\bigoplus_{l+k=2p}C^p_{\mathsf{LTS}}(\g^{l,k},\g_2).\label{decompose}
\end{eqnarray}
For a linear map $f\in C^p_{\mathsf{LTS}}(\g^{l,k},\g_1)$ (resp. $f\in C^p_{\mathsf{LTS}}(\g^{l,k},\g_2)$), $f$ naturally induces a linear map $\hat{f}\in C^p_{\mathsf{LTS}}(\g_1\oplus\g_2,\g_1\oplus\g_2)$ defined to be
\begin{eqnarray*}
\hat f:=
\begin{cases}
f,\quad \text{on }~~ \g^{l,k},\\
0,\quad \text{all other cases.}
\end{cases}
\end{eqnarray*}
The linear map $\hat f$ is called a {\bf lift} of $f$. For example, the lifts of linear maps $\alpha:\ot^3\g_1\longrightarrow\g_1$, $\beta:\ot^2\g_1\ot\g_2\longrightarrow\g_2$ and $\gamma:\g_2\ot(\ot^2\g_1)\longrightarrow\g_2$ are defined to be
\begin{eqnarray*}
\hat\alpha((x,u),(y,v),(z,w))&=&(\alpha(x,y,z),0),\\
\hat\beta((x,u),(y,v),(z,w))&=&(0,\beta(x,y,w)),\\
\hat\gamma((x,u),(y,v),(z,w))&=&(0,\gamma(u,y,z))
\end{eqnarray*}
respectively. A linear map $H:\g_2\longrightarrow\g_1$ induces its lift given by $\hat{H}(x,u)=(H(u),0)$. It is straightforward to see that $\hat H\circ \hat H=0$.

\vspace{3mm}
Now we give the notion of bidegree of linear maps in $C^p_{\mathsf{LTS}}(\g_1\oplus\g_2,\g_1\oplus\g_2)$ in the sequel.
\begin{defi}
Let $f\in C^p_{\mathsf{LTS}}(\g_1\oplus\g_2,\g_1\oplus\g_2)$ be a linear map and $l,k\in \mathbb Z$. Suppose that $l$ and $k$ satisfies the following conditions:
\begin{itemize}
\item[(i)] $l+k=2p$;
\item[(ii)] If $X\in \g^{l+1,k}$, then $f(X)\in \g_1$;
\item[(iii)] If $X\in \g^{l,k+1}$, then $f(X)\in \g_2$;
\item[(iv)] All the other cases, $f(X)=0$,
\end{itemize}
then we say that the {\bf bidegree} of $f$ is $l|k$, denoted by $||f||=l|k$. A linear map $f$ is said to be {\bf homogeneous} if $f$ has a bidegree.
\end{defi}

Notice that $l+k\geqslant 0,~k,l\geqslant -1$ since $p\geqslant 0$ and $l+1,k+1\geqslant0$. For example, $||\hat H||=-1|1$, where $H:\g_2\longrightarrow\g_1$ is a linear map; $||\hat\alpha||=||\hat\beta||=||\hat\gamma||=2|0$. Consequently, we obtain a homogeneous element $\hat\mu:=\hat\alpha+\hat\beta+\hat\gamma$ whose bidegree is $2|0$:
$$\hat\mu((x,u),(y,v),(z,w))=(\alpha(x,y,z),\beta(x,y,w)+\gamma(u,y,z)-\gamma(v,x,z)).$$
However, there does not exist any linear map whose lift is $\hat\mu$, but we focus on $\hat\mu$ since it is a multiplication of the semidirect product type.

We give some lemmas in the following.

\begin{lem}\label{lemma1}
Let $f_1,\cdots,f_n\in C^p_{\mathsf{LTS}}(\g_1\oplus\g_2,\g_1\oplus\g_2)$ be homogeneous linear maps and the bidegrees of $f_i$ are different. Then $\sum_{i=1}^nf_i=0$ if and only if $f_i=0,~i=1,2,\cdots,n$.
\end{lem}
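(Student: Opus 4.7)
The plan is to exploit the direct sum decomposition \eqref{decompose} and the fact that each homogeneous linear map is supported on a very thin slice of the full cochain space. The "if" direction is trivial, so I focus on the "only if" direction.

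First I would unpack what it means for $f_i$ to be homogeneous of bidegree $l_i|k_i$: by definition, $f_i$ vanishes on every $\g^{a,b}$ except $\g^{l_i+1,k_i}$ (where it takes values in $\g_1$) and $\g^{l_i,k_i+1}$ (where it takes values in $\g_2$). Using \eqref{decompose}, this means that each $f_i$ has exactly two nonzero pieces, living in the summands
\[
C^p_{\mathsf{LTS}}(\g^{l_i+1,k_i},\g_1)\quad\text{and}\quad C^p_{\mathsf{LTS}}(\g^{l_i,k_i+1},\g_2).
\]

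The key observation is that since all $f_i$ sit in $C^p_{\mathsf{LTS}}(\g_1\oplus\g_2,\g_1\oplus\g_2)$, we have $l_i+k_i=2p$ for every $i$. Therefore whenever two bidegrees $l_i|k_i$ and $l_j|k_j$ are different, necessarily $l_i\neq l_j$ and $k_i\neq k_j$, so the four summands $C^p_{\mathsf{LTS}}(\g^{l_i+1,k_i},\g_1)$, $C^p_{\mathsf{LTS}}(\g^{l_i,k_i+1},\g_2)$, $C^p_{\mathsf{LTS}}(\g^{l_j+1,k_j},\g_1)$, $C^p_{\mathsf{LTS}}(\g^{l_j,k_j+1},\g_2)$ are pairwise distinct summands of the direct sum \eqref{decompose}.

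Thus the pieces of $f_1,\ldots,f_n$ all lie in pairwise distinct summands of the direct sum. From $\sum_{i=1}^n f_i=0$ in a direct sum, each component must vanish separately, which forces each $f_i=0$. Concretely, evaluating $\sum_{i=1}^n f_i$ on an arbitrary element of $\g^{l_j+1,k_j}$ and projecting onto $\g_1$ recovers exactly the $\g_1$-valued piece of $f_j$; likewise for $\g^{l_j,k_j+1}$ and the $\g_2$-valued piece. Both must be zero, hence $f_j=0$ for every $j$. No analytical obstacle arises — the argument is a bookkeeping verification that distinct bidegrees single out distinct summands in \eqref{decompose}.
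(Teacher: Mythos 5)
Your proof is correct. The paper in fact states Lemma \ref{lemma1} without any proof, treating it as immediate from the decomposition \eqref{decompose}; your argument --- that distinct bidegrees (forced to have distinct $l_i$ and distinct $k_i$ since $l_i+k_i=2p$) place the $\g_1$- and $\g_2$-valued pieces of the $f_i$ in pairwise distinct direct summands, so the sum vanishes only if each component does --- is exactly the routine verification the authors omit, and the final evaluation-and-projection step closes it cleanly.
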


\begin{lem}\label{lem:0}
If $||f||=-1|l$ (resp. $l|-1$) and $||g||=-1|k$ (resp. $||g||=k|-1$), then $[f,g]_{\mathsf{LTS}}=0$.
\end{lem}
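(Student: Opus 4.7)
The plan is to prove the lemma by direct type-checking in the Nambu composition formula~\eqref{control}. Since $[f,g]_{\mathsf{LTS}} = f\circ g - (-1)^{p_f p_g}\,g\circ f$, it suffices to show $f\circ g = 0$ and $g\circ f = 0$ separately; by the symmetric roles of the two arguments a single argument handles both.

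Consider first the case $||f||=-1|l$, $||g||=-1|k$. From the bidegree conditions (ii)--(iv), any homogeneous map of bidegree $-1|m$ is supported on pure $\g_2$-inputs and outputs into $\g_1$. Expanding $f\circ g$ via~\eqref{control} with $P=f$, $Q=g$ yields three families of summands, distinguished by where $g$'s $\g_1$-valued output lands: in family one it replaces the component $x_{k+q}$, so the $k$-th fundamental-object slot of $f$ becomes $g(\cdots)\otimes y_{k+q}\in \g_1\otimes\g_2$; in family two it replaces $y_{k+q}$, giving $x_{k+q}\otimes g(\cdots)\in \g_2\otimes\g_1$; in family three it occupies the terminal scalar slot of $f$ as an element of $\g_1$. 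In each case at least one component of the argument tuple passed to $f$ lies in $\g_1$, so $f$ vanishes on it by condition (iv). Thus $f\circ g=0$, and interchanging the roles of $f$ and $g$ gives $g\circ f=0$.

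The parenthetical case $||f||=l|-1$, $||g||=k|-1$ is identical with the roles of $\g_1$ and $\g_2$ swapped: both maps now take pure $\g_1$-inputs into $\g_2$, and substituting $g$'s $\g_2$-valued output into any of the three slot-types of $f$ introduces a $\g_2$-component where $f$ requires $\g_1$. The same three-family analysis delivers $f\circ g = g\circ f = 0$, whence $[f,g]_{\mathsf{LTS}}=0$.

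No serious obstacle arises. The only step that requires real attention is parsing~\eqref{control} carefully enough to identify, within each of the three summation families, exactly which slot of $f$ is fed $g$'s output; once that bookkeeping is settled, the $\g_1$/$\g_2$ mismatch kills every summand and the lemma is immediate. In particular, no sign analysis or combinatorial shuffle identity is required, since each individual term already vanishes.
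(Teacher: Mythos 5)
Your proof is correct and follows essentially the same route as the paper: both arguments observe that a map of bidegree $-1|m$ is supported on pure $\g_2$-inputs with values in $\g_1$, so feeding the $\g_1$-valued output of one map into any slot of the other kills every term of the composition, whence $f\circ g=g\circ f=0$. The paper states this in one line; your version merely spells out the slot-by-slot bookkeeping in \eqref{control}.
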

\begin{proof}
Suppose that $||f||=-1|l$ and $||g||=-1|k$. Then both $f$ and $g$ are lifts of some linear maps in $C^*_{\mathsf{LTS}}(\g_2,\g_1)$. Hence, we have $f\circ g=g\circ f=0$, which leads to $[f,g]_{\mathsf{LTS}}=0$.
\end{proof}

\begin{lem}\label{lemma}
Let $f\in C^p_{\mathsf{LTS}}(\g_1\oplus\g_2,\g_1\oplus\g_2)$ and $g\in C^q_{\mathsf{LTS}}(\g_1\oplus\g_2,\g_1\oplus\g_2)$ be homogeneous linear maps whose bidegrees are $l_f|l_f$ and $l_g|k_g$ respectively. Then $f\circ g\in C^{p+q}_{\mathsf{LTS}}(\g_1\oplus\g_2,\g_1\oplus\g_2)$ is a homogeneous linear map of bidegree $l_f+l_g|k_f+k_g$.
\end{lem}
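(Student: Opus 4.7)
The plan is to verify directly from the defining formula \eqref{control} that $f\circ g$ satisfies the three conditions for having bidegree $l_f+l_g\,|\,k_f+k_g$: (i) the degree equation $(l_f+l_g)+(k_f+k_g)=2(p+q)$, (ii) on $\g^{l_f+l_g+1,\,k_f+k_g}$ the composition lands in $\g_1$ while on $\g^{l_f+l_g,\,k_f+k_g+1}$ it lands in $\g_2$, and (iii) it vanishes on every other $\g^{l,k}$. Condition (i) is immediate from $l_f+k_f=2p$ and $l_g+k_g=2q$.

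For (ii) and (iii), I would run a uniform counting argument on the three summations comprising the right-hand side of \eqref{control}. Fix arguments $\huaX_1,\cdots,\huaX_{p+q},x$ in $\g_1\oplus\g_2$ and record their total $\g_1$-content $a$ and $\g_2$-content $b$, so $a+b=2(p+q)+1$. A generic summand in the first sum evaluates $f$ on $2p+1$ inputs, one of which is a pair whose first slot has been replaced by the value of $g$ on a selected list of $2q+1$ inputs. By the bidegree of $g$, the inner evaluation survives only when its inputs contain either $l_g+1$ vectors from $\g_1$ and $k_g$ from $\g_2$ (so $g$'s output lies in $\g_1$), or the switched profile (so the output lies in $\g_2$). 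By the bidegree of $f$, the outer evaluation survives only when the remaining $2p$ arguments to $f$, together with $g$'s output, match one of $f$'s two admissible profiles. Adding the $\g_1$- and $\g_2$-tallies across these two constraints forces $(a,b)=(l_f+l_g+1,\,k_f+k_g)$ when the summand lies in $\g_1$, and $(a,b)=(l_f+l_g,\,k_f+k_g+1)$ when it lies in $\g_2$; any other profile kills every summand. The identical accounting proves the same conclusion for the second sum of \eqref{control} (with $y_{k+q}$ playing the role of $x_{k+q}$) and for the third sum (in which $g$ consumes the trailing argument $x$ rather than an entry of one of the pairs).

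The main obstacle is not conceptual but organizational: one must keep the three summation types and the two cases for the location of $g$'s output parallel without double-counting the $\g_1/\g_2$ contributions. Once this bookkeeping is laid out carefully the bidegree of $f\circ g$ is forced to be $l_f+l_g\,|\,k_f+k_g$, and since $f\circ g$ lies in $C^{p+q}_{\mathsf{LTS}}(\g_1\oplus\g_2,\g_1\oplus\g_2)$ by Lemma \ref{lemma1} and the graded Lie algebra structure, the lemma follows.
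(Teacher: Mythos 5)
Your argument is correct; the paper states Lemma \ref{lemma} without proof, and your profile-counting verification from \eqref{control} — forcing the $\g_1$/$\g_2$ content of the inputs by composing the two admissible profiles of $g$ with the two admissible profiles of $f$, uniformly over the three summation types — is exactly the routine check being left implicit. One small correction: the membership $f\circ g\in C^{p+q}_{\mathsf{LTS}}(\g_1\oplus\g_2,\g_1\oplus\g_2)$ does not follow from Lemma \ref{lemma1} (which only says that a sum of homogeneous maps of pairwise distinct bidegrees vanishes iff each summand does); it is simply because $\circ$ is additive on the cochain degree and $C^*_{\mathsf{LTS}}$ is closed under it.
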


\begin{lem}\label{lem:bidegree}
Suppose that $||f||=l_f|k_f$ and $||g||=l_g|k_g$, then $||[f,g]_{\mathsf{LTS}}||=l_f+l_g|k_f+k_g$.
\end{lem}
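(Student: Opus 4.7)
The plan is to derive this statement as a direct corollary of the previous Lemma \ref{lemma}. By the definition of the Nambu bracket, we have
\[
[f,g]_{\mathsf{LTS}} = f\circ g - (-1)^{pq}\, g\circ f,
\]
so it suffices to show that each of the two terms on the right-hand side is homogeneous of the same bidegree $l_f+l_g\,|\,k_f+k_g$, and then to note that homogeneous maps of a fixed bidegree form a linear subspace of $C^{p+q}_{\mathsf{LTS}}(\g_1\oplus\g_2,\g_1\oplus\g_2)$.

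First I would apply Lemma \ref{lemma} to $f\circ g$, obtaining bidegree $l_f+l_g\,|\,k_f+k_g$. Then I would apply the same lemma to $g\circ f$, which by symmetry gives bidegree $l_g+l_f\,|\,k_g+k_f$, coinciding with the previous one. A quick consistency check on the arithmetic is worth including: since $l_f+k_f=2p$ and $l_g+k_g=2q$, we have $(l_f+l_g)+(k_f+k_g)=2(p+q)$, matching the degree of $[f,g]_{\mathsf{LTS}}$ as an element of $C^{p+q}_{\mathsf{LTS}}$, so condition (i) of the bidegree definition is indeed satisfied.

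Finally, I would observe that the conditions (ii)--(iv) defining bidegree $l\,|\,k$ — namely that a map sends $\g^{l+1,k}$ into $\g_1$, sends $\g^{l,k+1}$ into $\g_2$, and vanishes on all other homogeneous components — are each closed under the taking of linear combinations. Hence $f\circ g - (-1)^{pq}g\circ f$ is homogeneous of bidegree $l_f+l_g\,|\,k_f+k_g$, proving the claim.

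I do not anticipate a genuine obstacle here: the entire content is bookkeeping on top of Lemma \ref{lemma}. The only point that deserves care is verifying that the two composites $f\circ g$ and $g\circ f$ carry the same bidegree (so that subtraction does not break homogeneity), which follows from the commutativity of addition in the pair $(l_f+l_g,k_f+k_g)$.
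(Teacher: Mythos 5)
Your proposal is correct and matches the paper's own proof, which likewise derives the claim directly from the definition $[f,g]_{\mathsf{LTS}}=f\circ g-(-1)^{pq}g\circ f$ together with Lemma \ref{lemma}. You merely spell out the bookkeeping (both composites share the bidegree $l_f+l_g|k_f+k_g$, and homogeneity of a fixed bidegree is preserved under linear combination) that the paper leaves implicit.
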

\begin{proof}
Since $[P,Q]_{\mathsf{LTS}}=P\circ Q-(-1)^{pq}Q\circ P,$ thus by Lemma \ref{lemma}, we obtain that  $||[f,g]_{\mathsf{LTS}}||=l_f+l_g|k_f+k_g$.
\end{proof}

\subsection{Proto-twilled and quasi-twilled Lie triple systems}
\begin{defi}
Let $(\huaG,\Courant{\cdot,\cdot,\cdot}_{\huaG})$ be a Lie triple system with a decomposition of two subspaces: $\huaG=\g_1\oplus\g_2$. Then we call the triple $(\huaG,\g_1,\g_2)$ a {\bf proto-twilled Lie triple system}.
\end{defi}

It is not necessary that $\g_1$ and $\g_2$ be subalgebras of $\huaG$ in a proto-twilled Lie triple system $(\huaG,\g_1,\g_2)$.

\begin{lem}\label{lem:decom}
Any $1$-cochain $\Theta\in C^1_{\mathsf{LTS}}(\huaG,\huaG)$ can be decomposed into five linear maps whose bidegrees are $3|-1,~2|0,~1|1,~0|2$ and $-1|3$:
$$\Theta=\hat\phi_1+\hat\mu_1+\hat\psi+\hat\mu_2+\hat\phi_2.$$
\end{lem}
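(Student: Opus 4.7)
The plan is to exploit the two direct-sum decompositions (of source and of target) and then repack the resulting atomic components by bidegree. Since $\huaG=\g_1\oplus\g_2$, we have
$$\ot^3\huaG=\g^{3,0}\oplus\g^{2,1}\oplus\g^{1,2}\oplus\g^{0,3},$$
and combined with $\huaG=\g_1\oplus\g_2$ on the target this expresses a general $\Theta\in C^1_{\mathsf{LTS}}(\huaG,\huaG)$ as a sum of eight atomic pieces $\Theta^{l,k}_i:\g^{l,k}\to\g_i$ with $l+k=3$ and $i\in\{1,2\}$, each obtained by restricting the domain to $\g^{l,k}$ and projecting the codomain onto $\g_i$.

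Next I would group these eight atomic pieces according to bidegree. By conditions (ii)--(iv) in the definition, a homogeneous summand of bidegree $l|k$ sends $\g^{l+1,k}$ into $\g_1$, sends $\g^{l,k+1}$ into $\g_2$, and is zero elsewhere, so bidegrees for $p=1$ are precisely those $(l,k)$ with $l+k=2$ and $l,k\geqslant -1$, namely $(3,-1),(2,0),(1,1),(0,2),(-1,3)$. The extreme bidegrees $3|-1$ and $-1|3$ absorb only one atomic piece each, namely $\Theta^{3,0}_2$ and $\Theta^{0,3}_1$; the interior bidegrees $2|0$, $1|1$ and $0|2$ each absorb two atomic pieces, one landing in $\g_1$ and one in $\g_2$, corresponding exactly to the two summands of \eqref{decompose}. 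Label the resulting homogeneous summands $\hat\phi_1,\hat\mu_1,\hat\psi,\hat\mu_2,\hat\phi_2$ in order of decreasing first index.

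It then remains to verify that each grouped summand actually lies in $C^1_{\mathsf{LTS}}(\huaG,\huaG)$, i.e.\ that the restrictions retain the skew-symmetry in the first two slots and the cyclic identity imposed on elements of $C^1_{\mathsf{LTS}}$. Both identities hold for $\Theta$ on the full tensor power and are preserved under restriction to each $\g^{l,k}$ and projection onto $\g_1$ or $\g_2$, so they descend to each of the five grouped components. Summing reproduces $\Theta$ on every $\g^{l,k}$, hence on all of $\ot^3\huaG$, and Lemma \ref{lemma1} guarantees that the decomposition is unique.

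I do not foresee a genuine obstacle; the whole argument is bookkeeping based on the two direct sums. The only moment that requires care is recognising that the three interior bidegrees are each realised by a pair of atomic pieces targeting \emph{different} summands of $\huaG$, so that the homogeneous summands $\hat\mu_1$, $\hat\psi$, $\hat\mu_2$ are not lifts of single linear maps on a single $\g^{l,k}$ but of pairs of such maps, as already hinted by the example following the definition of bidegree.
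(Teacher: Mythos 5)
Your proof is correct and takes essentially the same approach as the paper's: decompose $C^1_{\mathsf{LTS}}(\huaG,\huaG)$ via \eqref{decompose} into homogeneous pieces of bidegree $l|k$ with $l+k=2$, $l,k\geqslant -1$, and invoke Lemma \ref{lemma1} for uniqueness. The paper records this in two lines; your explicit bookkeeping of the eight atomic components (one each for the extreme bidegrees $3|-1$ and $-1|3$, two each for the interior ones) is exactly what is made concrete in \eqref{equi}.
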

\begin{proof}
Notice that $C^1_{\mathsf{LTS}}(\huaG,\huaG)=\Hom_{\mathsf{LTS}}(\ot^3\huaG,\huaG)$, and by \eqref{decompose}, we have
$$C^1_{\mathsf{LTS}}(\huaG,\huaG)=(3|-1)\oplus(2|0)\oplus(1|1)\oplus(0|2)\oplus(-1|3),$$
where $(l|k)$ denotes the space of linear maps of bidegree $l|k$ such that $l+k=2$. By Lemma \ref{lemma1}, $\Theta$ can be decomposed into five homogeneous linear maps of bidegrees $3|-1,~2|0,~1|1,~0|2$ and $-1|3$. The proof is finished.
\end{proof}

The operation $\Courant{(x,u),(y,v),(z,w)}_\huaG$ of $\huaG$ is uniquely decomposed into 12 multiplications by the canonical projections $\huaG\longrightarrow\g_1$ and $\huaG\longrightarrow\g_2$:
\begin{eqnarray*}
\Courant{x,y,z}_\huaG&=&(\Courant{x,y,z}_1,\Courant{x,y,z}_2), \quad \Courant{x,y,w}_\huaG=(\Courant{x,y,w}_1,\Courant{x,y,w}_2),\\
~\Courant{x,v,z}_\huaG&=&(\Courant{x,v,z}_1,\Courant{x,v,z}_2), \quad \Courant{x,v,w}_\huaG=(\Courant{x,v,w}_1,\Courant{x,v,w}_2),\\
~\Courant{u,v,z}_\huaG&=&(\Courant{u,v,z}_1,\Courant{u,v,z}_2),  \quad \Courant{u,v,w}_\huaG=(\Courant{u,v,w}_1,\Courant{u,v,z}_2).
\end{eqnarray*}
Here the operation $\Courant{\cdot,\cdot,\cdot}_1$ (resp. $\Courant{\cdot,\cdot,\cdot}_2$) denotes the projection of $\huaG$ onto $\g_1$ (resp. $\g_2$).

In the following, we use the notation $\Theta$ to denote the operation $\Courant{\cdot,\cdot,\cdot}_\huaG$ on $\huaG$, i.e.,
$$\Theta((x,u),(y,v),(z,w))=\Courant{(x,u),(y,v),(z,w)}_\huaG.$$

Write $\Theta=\hat\phi_1+\hat\mu_1+\hat\psi+\hat\mu_2+\hat\phi_2$ as in Lemma \ref{lem:decom}, then we obtain that
{\footnotesize
\begin{eqnarray}\label{equi}
\left\{\begin{array}{rcl}
~~\hat\phi_1((x,u),(y,v),(z,w))&=&(0,\Courant{x,y,z}_2),\\
~~\hat\mu_1((x,u),(y,v),(z,w))&=&(\Courant{x,y,z}_1,\Courant{x,y,w}_2+\Courant{u,y,z}_2-\Courant{v,x,z}_2),\\
~~\hat\psi((x,u),(y,v),(z,w))&=&(\Courant{x,y,w}_1+\Courant{u,y,z}_1-\Courant{v,x,z}_1,\Courant{u,v,z}_2+\Courant{x,v,w}_2-\Courant{y,u,w}_2),\\
~~\hat\mu_2((x,u),(y,v),(z,w))&=&(\Courant{u,v,z}_1+\Courant{x,v,w}_1-\Courant{y,u,w}_1,\Courant{u,v,w}_2),\\
~~\hat\phi_2((x,u),(y,v),(z,w))&=&(\Courant{u,v,w}_1,0).
\end{array}\right.
\end{eqnarray}}

It is easy to see that $\hat\phi_1$ and $\hat\phi_2$ are lifts of linear maps $\phi_1\in C^1_{\mathsf{LTS}}(\g_1,\g_2)$ and $\phi_2\in C^1_{\mathsf{LTS}}(\g_2,\g_1)$ respectively, where $\phi_1(x,y,z):=\Courant{x,y,z}_2$ and $\phi_2(u,v,w):=\Courant{u,v,w}_1$.

\begin{pro}
The Maurer-Cartan equation $[\Theta,\Theta]_{\mathsf{LTS}}=0$ is equivalent to the following conditions:
\begin{eqnarray*}
\left\{\begin{array}{rcl}
[\hat\phi_1,\hat\mu_1]_{\mathsf{LTS}}&=&0,\\
~[\hat\psi,\hat\phi_1]_{\mathsf{LTS}}+\half[\hat\mu_1,\hat\mu_1]_{\mathsf{LTS}}&=&0,\\
~[\hat\phi_1,\hat\mu_2]_{\mathsf{LTS}}+[\hat\psi,\hat\mu_1]_{\mathsf{LTS}}&=&0,\\
~[\hat\psi,\hat\phi_2]_{\mathsf{LTS}}+[\hat\mu_1,\hat\mu_2]_{\mathsf{LTS}}+\half[\hat\psi,\hat\psi]_{\mathsf{LTS}}&=&0,\\
~[\hat\mu_1,\hat\phi_2]_{\mathsf{LTS}}+[\hat\psi,\hat\mu_2]_{\mathsf{LTS}}&=&0,\\
~[\hat\psi,\hat\phi_2]_{\mathsf{LTS}}+\half[\hat\mu_2,\hat\mu_2]_{\mathsf{LTS}}&=&0,\\
~[\hat\mu_2,\hat\phi_2]_{\mathsf{LTS}}&=&0.
\end{array}\right.
\end{eqnarray*}
\end{pro}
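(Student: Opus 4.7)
The plan is to expand $[\Theta,\Theta]_{\mathsf{LTS}}$ bilinearly using the decomposition $\Theta=\hat\phi_1+\hat\mu_1+\hat\psi+\hat\mu_2+\hat\phi_2$ from Lemma \ref{lem:decom}, then sort the resulting terms by bidegree and invoke Lemma \ref{lemma1}. Since each summand of $\Theta$ has degree $1$ in the graded Lie algebra $(C^*_{\mathsf{LTS}}(\huaG,\huaG),[\cdot,\cdot]_{\mathsf{LTS}})$, the graded symmetry of $[\cdot,\cdot]_{\mathsf{LTS}}$ yields
\[
[\Theta,\Theta]_{\mathsf{LTS}}=\sum_{i\leqslant j}(2-\delta_{ij})\,[\hat\xi_i,\hat\xi_j]_{\mathsf{LTS}},
\]
where $(\hat\xi_1,\ldots,\hat\xi_5)=(\hat\phi_1,\hat\mu_1,\hat\psi,\hat\mu_2,\hat\phi_2)$, so altogether $15$ brackets appear (each self-bracket appearing with the factor $\tfrac12$ when written as $\tfrac12[\hat\xi_i,\hat\xi_i]$).

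Next I would reduce these $15$ brackets. By Lemma \ref{lem:0}, the self-brackets $[\hat\phi_1,\hat\phi_1]_{\mathsf{LTS}}$ and $[\hat\phi_2,\hat\phi_2]_{\mathsf{LTS}}$ vanish because $\hat\phi_1$ has bidegree $3|-1$ and $\hat\phi_2$ has bidegree $-1|3$. Then, using Lemma \ref{lem:bidegree}, each surviving bracket $[\hat\xi_i,\hat\xi_j]_{\mathsf{LTS}}$ is homogeneous and its bidegree equals the sum of the bidegrees of $\hat\xi_i$ and $\hat\xi_j$. Grouping the remaining $13$ terms by their resulting bidegrees yields exactly seven homogeneous classes, corresponding to bidegrees $5|{-1}$, $4|0$, $3|1$, $2|2$, $1|3$, $0|4$ and ${-1}|5$; for instance, the $4|0$ class collects $[\hat\phi_1,\hat\psi]_{\mathsf{LTS}}$ (from $3|{-1}$ and $1|1$) together with $\tfrac12[\hat\mu_1,\hat\mu_1]_{\mathsf{LTS}}$ (from $2|0$ and $2|0$), while the $2|2$ class is the sum of $[\hat\phi_1,\hat\phi_2]_{\mathsf{LTS}}$, $[\hat\mu_1,\hat\mu_2]_{\mathsf{LTS}}$ and $\tfrac12[\hat\psi,\hat\psi]_{\mathsf{LTS}}$.

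Finally, by Lemma \ref{lemma1}, a sum of homogeneous elements of pairwise distinct bidegrees vanishes if and only if each summand vanishes. Applying this to the seven-term bidegree decomposition of $[\Theta,\Theta]_{\mathsf{LTS}}$ gives the seven equations listed in the statement. The argument is essentially bookkeeping — no deep identity is required beyond the three preparatory lemmas — so the only real obstacle is ensuring that each of the $13$ nontrivial brackets is assigned to the correct bidegree stratum and that signs from graded commutativity are tracked carefully; the self-brackets $\tfrac12[\hat\mu_1,\hat\mu_1]_{\mathsf{LTS}}$, $\tfrac12[\hat\psi,\hat\psi]_{\mathsf{LTS}}$ and $\tfrac12[\hat\mu_2,\hat\mu_2]_{\mathsf{LTS}}$ must be distinguished from the cross terms $[\hat\xi_i,\hat\xi_j]_{\mathsf{LTS}}$ with $i\neq j$, which carry no $\tfrac12$ factor.
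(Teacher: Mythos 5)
Your proposal follows essentially the same route as the paper: expand $[\Theta,\Theta]_{\mathsf{LTS}}$ bilinearly over the five summands, discard the self-brackets of $\hat\phi_1$ and $\hat\phi_2$, sort the surviving brackets into seven bidegree strata via Lemma \ref{lem:bidegree}, and conclude with Lemma \ref{lemma1}. One substantive point, though: your identification of the $2|2$ stratum as $[\hat\phi_1,\hat\phi_2]_{\mathsf{LTS}}+[\hat\mu_1,\hat\mu_2]_{\mathsf{LTS}}+\tfrac12[\hat\psi,\hat\psi]_{\mathsf{LTS}}$ is the correct one, and it \emph{disagrees} with the fourth equation as stated (and as written in the paper's own proof), which has $[\hat\psi,\hat\phi_2]_{\mathsf{LTS}}$ in that slot. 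Since $\|\hat\psi\|=1|1$ and $\|\hat\phi_2\|=-1|3$, the bracket $[\hat\psi,\hat\phi_2]_{\mathsf{LTS}}$ has bidegree $0|4$ and already belongs to the sixth equation; its appearance in the fourth equation is evidently a typo for $[\hat\phi_1,\hat\phi_2]_{\mathsf{LTS}}$, which is the unique cross term of bidegree $2|2$ and does not vanish by Lemma \ref{lem:0} (the two bidegrees $3|-1$ and $-1|3$ are not both of the form $l|-1$ nor both of the form $-1|l$). So your bookkeeping is right; the statement should be corrected accordingly.
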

\begin{proof}
Since $\Theta=\hat\phi_1+\hat\mu_1+\hat\psi+\hat\mu_2+\hat\phi_2\in C^1_{\mathsf{LTS}}(\huaG,\huaG)$, then expanding the expression $[\Theta,\Theta]_{\mathsf{LTS}}\in C^2_{\mathsf{LTS}}(\huaG,\huaG)$ yields the following nonzero terms by bidegree classification:
\begin{eqnarray*}
\left\{\begin{array}{rcl}
[\hat\phi_1,\hat\mu_1]_{\mathsf{LTS}}&\in& (5|-1),\\
~2[\hat\psi,\hat\phi_1]_{\mathsf{LTS}}+[\hat\mu_1,\hat\mu_1]_{\mathsf{LTS}}&\in& (4|0),\\
~2[\hat\phi_1,\hat\mu_2]_{\mathsf{LTS}}+2[\hat\psi,\hat\mu_1]_{\mathsf{LTS}}&\in& (3|1),\\
~2[\hat\psi,\hat\phi_2]_{\mathsf{LTS}}+2[\hat\mu_1,\hat\mu_2]_{\mathsf{LTS}}+[\hat\psi,\hat\psi]_{\mathsf{LTS}}&\in& (2|2),\\
~2[\hat\mu_1,\hat\phi_2]_{\mathsf{LTS}}+2[\hat\psi,\hat\mu_2]_{\mathsf{LTS}}&\in& (1|3),\\
~2[\hat\psi,\hat\phi_2]_{\mathsf{LTS}}+[\hat\mu_2,\hat\mu_2]_{\mathsf{LTS}}&\in& (0|4)\\
~[\hat\mu_2,\hat\phi_2]_{\mathsf{LTS}}&\in& (-1|5).
\end{array}\right.
\end{eqnarray*}
Thus the Maurer-Cartan equation $[\Theta,\Theta]_{\mathsf{LTS}}=0$ holds if and only if all of its expanding parts vanish.
\end{proof}

\begin{defi}\label{defi:quasi}
Let $(\huaG,\g_1,\g_2)$ be a proto-twilled Lie triple system and $\Theta=\hat\phi_1+\hat\mu_1+\hat\psi+\hat\mu_2+\hat\phi_2\in C^1_{\mathsf{LTS}}(\huaG,\huaG)$ its structure. If $\phi_2=0$, or equivalently, $\g_2$ is a subalgebra, then the proto-twilled Lie triple system $(\huaG,\g_1,\g_2)$ is called a {\bf quasi-twilled Lie triple system}.
\end{defi}

\begin{rmk}
Since $\huaG=\g_1\oplus\g_2=\g_2\oplus\g_1$, thus the condition in Definition \ref{defi:quasi} is adapted in the case that $\phi_1=0$. Hence the condition making the proto-twilled Lie triple system $(\huaG,\g_1,\g_2)$ into a quasi-twilled Lie triple system is that either $\g_1$ or $\g_2$ is a sublagebra. In the following, we always assume that $\g_2$ is a subalgebra when quasi-twilled Lie triple system is referred.
\end{rmk}

It is direct to obtain the following proposition.
\begin{pro}
The triple $(\huaG,\g_1,\g_2)$ is a quasi-twilled Lie triple system if and only if the following conditions are satisfied:
\begin{eqnarray*}
\left\{\begin{array}{rcl}
[\hat\phi_1,\hat\mu_1]_{\mathsf{LTS}}&=&0,\\
~[\hat\psi,\hat\phi_1]_{\mathsf{LTS}}+\half[\hat\mu_1,\hat\mu_1]_{\mathsf{LTS}}&=&0,\\
~[\hat\phi_1,\hat\mu_2]_{\mathsf{LTS}}+[\hat\psi,\hat\mu_1]_{\mathsf{LTS}}&=&0,\\
~[\hat\mu_1,\hat\mu_2]_{\mathsf{LTS}}+\half[\hat\psi,\hat\psi]_{\mathsf{LTS}}&=&0,\\
~[\hat\psi,\hat\mu_2]_{\mathsf{LTS}}&=&0,\\
~\half[\hat\mu_2,\hat\mu_2]_{\mathsf{LTS}}&=&0.
\end{array}\right.
\end{eqnarray*}
\end{pro}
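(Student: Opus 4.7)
The plan is to derive this proposition directly as a corollary of the preceding one, which characterized the Maurer–Cartan equation $[\Theta,\Theta]_{\mathsf{LTS}}=0$ for a general proto-twilled Lie triple system via seven bracket equations indexed by bidegree. By definition, the quasi-twilled condition adds precisely one extra hypothesis to the proto-twilled setup: $\hat\phi_2=0$. So I would formulate the argument as: $(\huaG,\g_1,\g_2)$ is a quasi-twilled Lie triple system if and only if $\hat\phi_2=0$ and the seven equations of the previous proposition hold; then I substitute $\hat\phi_2=0$ to eliminate redundancies and arrive at the six listed equations.

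Concretely, first I would note that under $\hat\phi_2=0$ the bilinearity of $[\cdot,\cdot]_{\mathsf{LTS}}$ immediately forces
\[
[\hat\psi,\hat\phi_2]_{\mathsf{LTS}}=[\hat\mu_1,\hat\phi_2]_{\mathsf{LTS}}=[\hat\mu_2,\hat\phi_2]_{\mathsf{LTS}}=0.
\]
Plugging these into the seven equations of the preceding proposition: the first three equations are untouched and appear verbatim; the $(2|2)$-equation becomes $[\hat\mu_1,\hat\mu_2]_{\mathsf{LTS}}+\tfrac12[\hat\psi,\hat\psi]_{\mathsf{LTS}}=0$; the $(1|3)$-equation becomes $[\hat\psi,\hat\mu_2]_{\mathsf{LTS}}=0$; the $(0|4)$-equation becomes $\tfrac12[\hat\mu_2,\hat\mu_2]_{\mathsf{LTS}}=0$; and the $(-1|5)$-equation becomes $0=0$, so it drops out. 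This yields exactly the six conditions stated.

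Conversely, starting from the six equations plus the standing assumption $\hat\phi_2=0$ built into the quasi-twilled setup, one recovers all seven equations of the preceding proposition (those involving $\hat\phi_2$ holding trivially), hence $[\Theta,\Theta]_{\mathsf{LTS}}=0$, which together with $\hat\phi_2=0$ means $\g_2$ is a subalgebra of the Lie triple system $(\huaG,\Theta)$, i.e., $(\huaG,\g_1,\g_2)$ is quasi-twilled.

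I do not anticipate any serious obstacle: the argument is a pure bidegree bookkeeping exercise given the previous proposition, and the only minor subtlety is ensuring that each term involving $\hat\phi_2$ in the expansion of $[\Theta,\Theta]_{\mathsf{LTS}}$ is correctly identified and eliminated. This is handled uniformly by bilinearity of the graded Lie bracket, so the proof reduces to one line of substitution plus a brief remark on the vanishing terms.
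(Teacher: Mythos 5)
Your proposal is correct and matches the paper's (implicit) argument: the paper states this proposition without proof as ``direct,'' and the intended justification is exactly your substitution of $\hat\phi_2=0$ into the seven bidegree equations of the preceding Maurer--Cartan proposition, with bilinearity killing every bracket containing $\hat\phi_2$. Nothing further is needed.
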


\subsection{Twilled Lie triple systems and $L_\infty$-algebras}
In this subsection, we introduce the notion of twilled Lie triple systems, which is our main object in the present paper, and then we construct an $L_\infty$-algebra from a twilled Lie triple system via the higher derived brackets.
\begin{defi}\label{defi:twilled}
Let $(\huaG,\g_1,\g_2)$ be a proto-twilled Lie triple system and $\Theta=\hat\phi_1+\hat\mu_1+\hat\psi+\hat\mu_2+\hat\phi_2\in C^1_{\mathsf{LTS}}(\huaG,\huaG)$ its structure. If both $\phi_1$ and $\phi_2$ all vanish, or equivalently, both $\g_1$ and $\g_2$ are subalgebras of $\huaG$, then the proto-twilled Lie triple system $(\huaG,\g_1,\g_2)$ is called a {\bf twilled Lie triple system}.
\end{defi}

\begin{rmk}
In \cite{Sheng Zhao}, the first author et al. explored product structures on Lie-Yamaguti algebras, and we obtain the corresponding product structure on Lie triple system if we restrict the binary operation on Lie-Yamaguti algebra to zero. By Definition \ref{defi:twilled}, we see that $\huaG$ is a twilled Lie triple system if and only if there exists a product structure on $\huaG$.
\end{rmk}

It is easy to deduce the following proposition.
\begin{pro}\label{pro:twilled}
The triple $(\huaG,\g_1,\g_2)$ is a twilled Lie triple system if and only if the following conditions are satisfied:
\begin{eqnarray*}
\left\{\begin{array}{rcl}
~\half[\hat\mu_1,\hat\mu_1]_{\mathsf{LTS}}&=&0,\\
~[\hat\psi,\hat\mu_1]_{\mathsf{LTS}}&=&0,\\
~[\hat\mu_1,\hat\mu_2]_{\mathsf{LTS}}+\half[\hat\psi,\hat\psi]_{\mathsf{LTS}}&=&0,\\
~[\hat\psi,\hat\mu_2]_{\mathsf{LTS}}&=&0,\\
~\half[\hat\mu_2,\hat\mu_2]_{\mathsf{LTS}}&=&0.
\end{array}\right.
\end{eqnarray*}
\end{pro}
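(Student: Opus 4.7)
The plan is to derive Proposition \ref{pro:twilled} as a direct specialization of the proto-twilled case: a twilled structure is precisely a proto-twilled structure in which both $\phi_1$ and $\phi_2$ vanish, so $\Theta=\hat\mu_1+\hat\psi+\hat\mu_2$, and we simply expand the Maurer-Cartan equation $[\Theta,\Theta]_{\mathsf{LTS}}=0$ and sort the resulting terms by bidegree. Concretely, I first invoke the fact (used in the proto-twilled proposition and ultimately coming from \cite{XST}) that $(\huaG,\Courant{\cdot,\cdot,\cdot}_\huaG)$ being a Lie triple system is equivalent to $\Theta$ being a Maurer-Cartan element of $(C^{*}_{\mathsf{LTS}}(\huaG,\huaG),[\cdot,\cdot]_{\mathsf{LTS}})$; by Definition \ref{defi:twilled} the twilled condition amounts to $\phi_1=\phi_2=0$ and thus to $\Theta=\hat\mu_1+\hat\psi+\hat\mu_2$.

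Next I expand
\begin{eqnarray*}
[\Theta,\Theta]_{\mathsf{LTS}}&=&[\hat\mu_1,\hat\mu_1]_{\mathsf{LTS}}+2[\hat\psi,\hat\mu_1]_{\mathsf{LTS}}+2[\hat\mu_1,\hat\mu_2]_{\mathsf{LTS}}\\
&&+\,[\hat\psi,\hat\psi]_{\mathsf{LTS}}+2[\hat\psi,\hat\mu_2]_{\mathsf{LTS}}+[\hat\mu_2,\hat\mu_2]_{\mathsf{LTS}},
\end{eqnarray*}
using graded symmetry of $[\cdot,\cdot]_{\mathsf{LTS}}$ on odd elements. Then I apply Lemma \ref{lem:bidegree} with $\|\hat\mu_1\|=2|0$, $\|\hat\psi\|=1|1$, $\|\hat\mu_2\|=0|2$ to compute the bidegrees of the six summands as $4|0,\ 3|1,\ 2|2,\ 2|2,\ 1|3,\ 0|4$ respectively.

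The key observation is that all of these bidegrees are distinct except for $[\hat\mu_1,\hat\mu_2]_{\mathsf{LTS}}$ and $[\hat\psi,\hat\psi]_{\mathsf{LTS}}$, which both sit in bidegree $2|2$. Consequently Lemma \ref{lemma1} (uniqueness of decomposition into homogeneous parts) shows that $[\Theta,\Theta]_{\mathsf{LTS}}=0$ is equivalent to the simultaneous vanishing of the five homogeneous components
\[
[\hat\mu_1,\hat\mu_1]_{\mathsf{LTS}},\quad [\hat\psi,\hat\mu_1]_{\mathsf{LTS}},\quad 2[\hat\mu_1,\hat\mu_2]_{\mathsf{LTS}}+[\hat\psi,\hat\psi]_{\mathsf{LTS}},\quad [\hat\psi,\hat\mu_2]_{\mathsf{LTS}},\quad [\hat\mu_2,\hat\mu_2]_{\mathsf{LTS}},
\]
which, after dividing by $2$ where convenient, are exactly the five identities stated in the proposition.

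There is really no substantive obstacle: the two lemmas have already isolated the only nontrivial inputs, namely that $\circ$ is additive on bidegrees (Lemma \ref{lemma}) and that a sum of homogeneous cochains with distinct bidegrees vanishes only termwise (Lemma \ref{lemma1}). The only point that requires a small check is confirming that no two cross-brackets collapse unexpectedly into the same bidegree slot; the bookkeeping above shows that only the pair $([\hat\mu_1,\hat\mu_2]_{\mathsf{LTS}},[\hat\psi,\hat\psi]_{\mathsf{LTS}})$ collides, producing the single mixed equation in the middle of the list. The converse direction is immediate because each of the five equations is a homogeneous component of $[\Theta,\Theta]_{\mathsf{LTS}}$, so their joint vanishing forces $\Theta$ to be Maurer-Cartan and hence $\huaG=\g_1\oplus\g_2$ to be a Lie triple system with $\g_1,\g_2$ subalgebras, i.e., a twilled Lie triple system.
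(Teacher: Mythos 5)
Your proposal is correct and follows essentially the same route the paper intends: setting $\phi_1=\phi_2=0$ and sorting the expansion of $[\Theta,\Theta]_{\mathsf{LTS}}$ into its homogeneous bidegree components via Lemmas \ref{lemma1} and \ref{lem:bidegree}, exactly as in the proof of the proto-twilled proposition of which this is the specialization. The bidegree bookkeeping ($4|0$, $3|1$, $2|2$ twice, $1|3$, $0|4$) and the identification of the single collision in slot $2|2$ match the paper's computation.
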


\begin{rmk}
From Proposition \ref{pro:twilled}, we obtain that $\hat\mu_1\in C^1_{\mathsf{LTS}}(\huaG,\huaG)$ is a Lie triple system structure on $\huaG=\g_1\oplus\g_2$. Moreover, by \eqref{equi}, we see that $\hat\mu_1|_{\g_1\ot\g_1}$ is a Lie triple system structure on $\g_1$, and we denote this Lie triple system by $(\g_1,\Courant{\cdot,\cdot,\cdot}_{\g_1})$. Set $\rho_1(x,y)u:=\hat\mu_1(u,x,y)$, then $D_1(x,y)u=D_{\rho_1}(x,y)u=\hat\mu_1(x,y,u)$ since $\g_2$ is a Lie triple system. Consequently, $(\g_2;\rho_1)$ is a representation of $\g_1$. Similarly, we have $\hat\mu_2|_{\g_1\ot\g_2}$ is a Lie triple system structure on $\g_2$, and $(\g_1;\rho_2)$ is a representation of $\g_2$, where $\rho_2(u,v)x:=\hat\mu_2(x,u,v)$ and $D_2(u,v)x=D_{\rho_2}(u,v)x=\hat\mu_2(u,v,x)$.
\end{rmk}

\emptycomment{
\begin{defi}
An $L_\infty$-algebra is a pair $(\g,\{l_k\}_{k=1}^\infty)$, where $\g=\oplus_{k\in \mathbb Z}\g_k$ is a $\mathbb Z$-graded vector space, the degrees of whose elements in $\g_k$ are assumed to be $k$, and $\{l_k\}_{k=1}^\infty$ is a collection of graded linear maps $l_k:\ot^k\g\longrightarrow\g,~(k\geqslant 1)$ of degree $1$ such that the following two conditions are satisfied for any homogeneous elements $x_1,\cdots,x_n\in \g$,
\begin{itemize}
\item[(i)] (graded symmetry) for all $\sigma\in S_n$, we have
$$l_n(x_{\sigma(1)},\cdots,x_{\sigma(n)})=\varepsilon(\sigma)l_n(x_1,\cdots,x_n),$$
\item[(ii)] (graded Jacobi identity) for all $n\geqslant 1$,
$$\sum_{i=1}^n\sum_{\sigma\in \mathbb S_{(i,n-i)}}\varepsilon(\sigma)l_{n-i+1}\Big(l_i(x_{\sigma(1)},\cdots,x_{\sigma(i)}),x_{\sigma(i+1)},\cdots,x_{\sigma(n)}\Big)=0.$$
\end{itemize}
Here $\mathbb S_{(i,n-i)}$ stands for the collection of all $(i,n-i)$-shuffles and $\varepsilon(\mathfrak{\sigma})$ stands for the Koszul sign: switching any two successive elements $x_i$ and $x_{i+1}$ leads to a sign change $(-1)^{|x_i||x_{i+1}|}$.
\end{defi}

\begin{defi}
Let $(\g,\{l_k\}_{k=1}^\infty)$ be an $L_\infty$-algebra. An element $\alpha\in \g_0$ of degree $0$ is called a {\bf Maurer-Cartan element } if $\alpha$ satisfies the following Maurer-Cartan equation
$$\sum_{k=1}^\infty\frac{1}{k!}l_k(\alpha,\cdots,\alpha)=0.$$
\end{defi}
}

In the sequel, we construct an $L_\infty$-algebra via a given twilled Lie triple system. Before this, let us recall the method of higher derived brackets of a differential graded Lie algebra. Let $(\g,[\cdot,\cdot],d)$ be a differential graded Lie algebra and $d_t:=\sum_{i=0}^\infty d_it^i$ a formal deformation of $d$ with $d_0=d$. Then $d_t$ is a differential on $\g[[t]]$, which is a Lie triple system of formal series with coefficients in $\g$. The square zero condition $d_t\circ d_t=0$ is equivalent to
$$\sum_{i+j=n,\atop i,j\geqslant 0}d_i\circ d_j=0,\quad n\in \mathbb{Z}_{\geqslant 0}.$$

Then set $l_k:\ot^k\g\longrightarrow\g$ to be
\begin{eqnarray}
l_k(x_1,\cdots,x_k)=[\cdots[[d_{k-1}(x_1),x_2],x_3],\cdots,x_k]\label{multi}
\end{eqnarray}
for all homogeneous elements $x_1,\cdots,x_k\in \g$. The collection of graded linear maps $\{l_k\}_{k=1}^\infty$ is called the {\bf higher derived brackets}.

A suitable $L_\infty$-algebra can be obtained from a differential graded Lie algebra via the method of higher derived brackets. The notion of $L_\infty$-algebras was introduced in \cite{Stasheff}. See \cite{LM,LS} for more details.
\begin{pro}(\cite{Uchino})\label{derived}
Let $(\g,[\cdot,\cdot],d)$ be a differential graded Lie algebra, and $\h\subset \g$ an abelian subalgebra, i.e., $[\h,\h]=0$. If $l_k$ is closed on $\h$, where $l_k$ is given by \eqref{multi}, then $(\h,\{l_k\}_{i=1}^\infty)$ is an $L_\infty$-algebra.
\end{pro}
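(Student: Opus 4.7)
The plan is to verify the two defining axioms of an $L_\infty$-algebra for $(\h,\{l_k\}_{k=1}^\infty)$: graded symmetry of each $l_k$ and the higher Jacobi identities. The closedness hypothesis $l_k(\h^{\otimes k})\subset \h$ guarantees that the brackets land in $\h$, so the axioms only need to be checked as identities in $\g$, where we have at our disposal graded antisymmetry, the graded Jacobi identity in $\g$, the derivation property of each $d_i$, and the deformation relation $\sum_{a+b=n}d_a\circ d_b=0$ obtained by expanding $d_t\circ d_t=0$ in powers of $t$.

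For graded symmetry, I first handle adjacent transpositions swapping positions $i$ and $i+1$ with $i\ge 2$. Writing $A=[[\cdots[d_{k-1}(x_1),x_2],\cdots],x_{i-1}]$, the graded Jacobi identity in $\g$ gives
$$[[A,x_i],x_{i+1}]-(-1)^{|x_i||x_{i+1}|}[[A,x_{i+1}],x_i]=[A,[x_i,x_{i+1}]],$$
and the right-hand side vanishes since $[x_i,x_{i+1}]=0$ by abelianness of $\h$. For the remaining swap of positions $1$ and $2$, I use that $d_{k-1}$ is a derivation of degree $1$ of $[\cdot,\cdot]$: applying it to $[x_1,x_2]=0$ and combining with graded antisymmetry yields
$$[d_{k-1}(x_1),x_2]=(-1)^{|x_1||x_2|}[d_{k-1}(x_2),x_1],$$
after which bracketing successively with $x_3,\ldots,x_k$ preserves the identity. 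Since adjacent transpositions generate $S_k$ and Koszul signs are multiplicative, this establishes graded symmetry of $l_k$.

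For the generalized Jacobi identity at level $n$, I unfold
$$l_{n-i+1}\bigl(l_i(x_{\sigma(1)},\ldots,x_{\sigma(i)}),x_{\sigma(i+1)},\ldots,x_{\sigma(n)}\bigr)=[[\cdots[d_{n-i}(l_i(x_{\sigma(1)},\ldots,x_{\sigma(i)})),x_{\sigma(i+1)}],\cdots],x_{\sigma(n)}],$$
expand $l_i(x_{\sigma(1)},\ldots,x_{\sigma(i)})$ as a nested bracket beginning with $d_{i-1}(x_{\sigma(1)})$, and distribute $d_{n-i}$ across this nested bracket using the graded Leibniz rule. The resulting summands split into two classes: those in which $d_{n-i}$ composes with $d_{i-1}$ on $x_{\sigma(1)}$, giving a factor $d_{n-i}\circ d_{i-1}(x_{\sigma(1)})$, and those in which $d_{n-i}$ lands on a leaf $x_{\sigma(j)}$ with $j\ge 2$.

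Summing over $i\in\{1,\ldots,n\}$ and shuffles $\sigma$, the first class reorganises into a single nested bracket whose leading operator is $\sum_{a+b=n-1}d_a\circ d_b$, which vanishes by the deformation relation. For the second class, repeated use of the graded Jacobi identity in $\g$ together with abelianness of $\h$ (which annihilates every bracket $[x_i,x_j]$ between leaves) reorganises the terms as a telescoping shuffle sum that cancels pairwise. The main obstacle is the bookkeeping for this second class: tracking the Koszul signs produced as $d_{n-i}$ is moved past successive inner brackets, together with the shuffle signs $\varepsilon(\sigma)$, and verifying that the reorganised terms indeed cancel. This is precisely Uchino's derived-bracket calculation applied to the formal deformation $d_t$, with $d_{k-1}$ playing in the $k$-th bracket the role taken by the single differential $\Delta$ in Voronov's original construction; once the sign analysis is carried out as in \cite{Uchino}, both obstructions vanish and $(\h,\{l_k\}_{k=1}^\infty)$ is an $L_\infty$-algebra.
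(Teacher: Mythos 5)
The paper does not prove this proposition at all — it is stated as a quoted result with the citation to \cite{Uchino} serving as the proof — so your proposal is, if anything, more detailed than the paper's treatment: your verification of graded symmetry (adjacent swaps in positions $\ge 2$ via the Jacobi identity plus $[\h,\h]=0$, and the swap of the first two slots via the derivation property of $d_{k-1}$ applied to $[x_1,x_2]=0$) is complete and correct, and your decomposition of the higher Jacobi identity into head-terms governed by $\sum_{a+b=n-1}d_a\circ d_b=0$ and leaf-terms cancelling by abelianness is the standard derived-bracket argument. Since you, like the paper, ultimately defer the sign bookkeeping and the shuffle cancellation to \cite{Uchino}, the two treatments rest on the same external source and I see no gap relative to what the paper itself establishes.
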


Let $(\huaG,\g_1,\g_2)$ be a twilled Lie triple system, and $C^p_{\mathsf{LTS}}(\g_2,\g_1)=\Hom_{\mathsf{LTS}}(\ot^{2p+1}\g_2,\g_1)$ a subspace of $(C^*_{\mathsf{LTS}}(\huaG,\huaG),[\cdot,\cdot]_{\mathsf{LTS}})$.  For any $f\in C^p_{\mathsf{LTS}}(\g_2,\g_1),~g\in C^q_{\mathsf{LTS}}(\g_2,\g_1),~h\in C^r_{\mathsf{LTS}}(\g_2,\g_1)$, define
\begin{eqnarray*}
&&l_1:C^p_{\mathsf{LTS}}(\g_2,\g_1)\longrightarrow C^{p+1}_{\mathsf{LTS}}(\g_2,\g_1),\\
~&&l_2:C^p_{\mathsf{LTS}}(\g_2,\g_1)\times C^q_{\mathsf{LTS}}(\g_2,\g_1)\longrightarrow C^{p+q+1}_{\mathsf{LTS}}(\g_2,\g_1),\\
~&&l_3:C^p_{\mathsf{LTS}}(\g_2,\g_1)\times C^q_{\mathsf{LTS}}(\g_2,\g_1)\times C^r_{\mathsf{LTS}}(\g_2,\g_1)\longrightarrow C^{p+q+r+1}_{\mathsf{LTS}}(\g_2,\g_1)
\end{eqnarray*}
to be
\begin{eqnarray*}
l_1(f)&=&[\hat\mu_2,\hat f]_{\mathsf{LTS}},\\
~l_2(f,g)&=&[[\hat\psi,\hat f]_{\mathsf{LTS}},\hat g]_{\mathsf{LTS}},\\
~l_3(f,g,h)&=&[[[\hat\mu_1,\hat f]_{\mathsf{LTS}},\hat g]_{\mathsf{LTS}},\hat h]_{\mathsf{LTS}}
\end{eqnarray*}
respectively. Here $\hat f,~\hat g$ and $\hat h$ are lifts of linear maps $f,~g$ and $h$ respectively.

\begin{thm}\label{Linfty}
Let $(\huaG,\g_1,\g_2)$ be a twilled Lie triple system. Then with the above notations, $(C^*_{\mathsf{LTS}}(\g_2,\g_1),l_1,l_2,l_3)$ is an $L_\infty$-algebra.
\end{thm}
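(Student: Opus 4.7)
My plan is to realize $(C^*_{\mathsf{LTS}}(\g_2,\g_1), l_1, l_2, l_3)$ as the $L_\infty$-algebra produced by the higher derived bracket construction of Proposition \ref{derived} applied to the ambient graded Lie algebra $(C^*_{\mathsf{LTS}}(\huaG,\huaG),[\cdot,\cdot]_{\mathsf{LTS}})$. The relevant abelian subspace will be the image $\h$ of the lifting map $f\mapsto \hat f$ on $C^*_{\mathsf{LTS}}(\g_2,\g_1)$, and the formal deformation of an inner differential will be assembled from the three components $\hat\mu_2$, $\hat\psi$, $\hat\mu_1$ of the twilled structure $\Theta=\hat\mu_1+\hat\psi+\hat\mu_2$.

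I would first verify the abelian condition. For $f\in C^p_{\mathsf{LTS}}(\g_2,\g_1)$, the lift $\hat f$ is supported on $\g^{0,2p+1}$ with values in $\g_1$, so from the definition of bidegree, $||\hat f||=-1|(2p+1)$. Lemma \ref{lem:0} then yields $[\hat f,\hat g]_{\mathsf{LTS}}=0$ for all such $f,g$, so $\h$ is abelian. Next I would set $d_0:=\mathrm{ad}(\hat\mu_2)$, $d_1:=\mathrm{ad}(\hat\psi)$, $d_2:=\mathrm{ad}(\hat\mu_1)$, and $d_i:=0$ for $i\geq 3$, and regard $d_t:=\sum_{i\geq 0} d_i t^i$ as a formal deformation of $d_0$. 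Since $[\cdot,\cdot]_{\mathsf{LTS}}$ is a graded Lie bracket, the square-zero condition $d_t\circ d_t=0$ rewrites as $\sum_{i+j=n}[\hat\theta_i,\hat\theta_j]_{\mathsf{LTS}}=0$ for every $n\geq 0$, where $\hat\theta_0=\hat\mu_2$, $\hat\theta_1=\hat\psi$, $\hat\theta_2=\hat\mu_1$; matching coefficients $n=0,\dots,4$ reproduces exactly the five identities of Proposition \ref{pro:twilled} characterizing the twilled structure.

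With both hypotheses of Proposition \ref{derived} satisfied, the derived $k$-ary bracket \eqref{multi} unwinds, via the identification $d_{k-1}(\hat f)=[\hat\theta_{k-1},\hat f]_{\mathsf{LTS}}$, into precisely the formulas for $l_1, l_2, l_3$ stated in the theorem, while $l_k\equiv 0$ for $k\geq 4$ because $d_{k-1}=0$ there. The remaining point is closure of $l_1, l_2, l_3$ on $\h$, which I would verify by iterated application of Lemma \ref{lem:bidegree}: starting from $||\hat\mu_2||=0|2$, $||\hat\psi||=1|1$, $||\hat\mu_1||=2|0$, each additional bracket with some $\hat f$ (of bidegree $-1|\mathrm{odd}$) decreases the first slot by one, so after $1$, $2$, respectively $3$ such bracketings, the output has bidegree $-1|\mathrm{odd}$ and therefore belongs to $\h$. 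The main obstacle I anticipate is the careful correspondence between the expansion of $d_t\circ d_t=0$ and the five equations of Proposition \ref{pro:twilled}, being vigilant about graded signs and the ad-to-bracket dictionary; the bidegree bookkeeping is routine given Lemma \ref{lem:bidegree}. Once both are in hand, the $L_\infty$-identities follow automatically from Proposition \ref{derived}.
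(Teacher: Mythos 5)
Your proposal is correct and follows essentially the same route as the paper: the authors also define $d_0=[\hat\mu_2,\cdot]_{\mathsf{LTS}}$, $d_1=[\hat\psi,\cdot]_{\mathsf{LTS}}$, $d_2=[\hat\mu_1,\cdot]_{\mathsf{LTS}}$, match the coefficients of $d_t\circ d_t=0$ with the five identities of Proposition \ref{pro:twilled}, verify that $C^*_{\mathsf{LTS}}(\g_2,\g_1)$ is abelian, and check closure of $l_1,l_2,l_3$ by the same bidegree bookkeeping via Lemma \ref{lem:bidegree} before invoking Proposition \ref{derived}.
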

\begin{proof}
We define derivations $d_i$ to be
$$d_0:=[\hat\mu_2,\cdot]_{\mathsf{LTS}},\quad d_1:=[\hat\psi,\cdot]_{\mathsf{LTS}},\quad d_2:=[\hat\mu_1,\cdot]_{\mathsf{LTS}},\quad d_k=0, \quad \forall k\geqslant3.$$
For all $f\in C^*_{\mathsf{LTS}}(\huaG,\huaG)$, since $2[[\hat\mu_2,\hat\mu_2]_{\mathsf{LTS}},f]_{\mathsf{LTS}}=[[\hat\mu_2,\hat\mu_2]_{\mathsf{LTS}},f]_{\mathsf{LTS}}$, we deduce that
$$d_0\circ d_0=0,$$
which implies that $(C^*_{\mathsf{LTS}}(\huaG,\huaG),[\cdot,\cdot]_{\mathsf{LTS}},d_0)$ is a differential graded Lie algebra.
Moreover, for all $f\in C^*_{\mathsf{LTS}}(\huaG,\huaG)$, by Proposition \ref{pro:twilled}, we have
\begin{eqnarray*}
(d_0\circ d_1+d_1\circ d_0)(f)&=&[\hat\mu_2,[\hat\psi,f]_{\mathsf{LTS}}]_{\mathsf{LTS}}+[\hat\psi,[\hat\mu_2,f]_{\mathsf{LTS}}]_{\mathsf{LTS}}\\
~&=&[[\hat\mu_2,\hat\psi]_{\mathsf{LTS}},f]_{\mathsf{LTS}}=0,\\
~(d_0\circ d_2+d_1\circ d_1+d_2\circ d_0)(f)&=&[\hat\mu_2,[\hat\mu_1,f]_{\mathsf{LTS}}]_{\mathsf{LTS}}+[\hat\psi,[\hat\psi,f]_{\mathsf{LTS}}]_{\mathsf{LTS}}+[\hat\mu_1,[\hat\mu_2,f]_{\mathsf{LTS}}]_{\mathsf{LTS}}\\
~&=&[[\hat\mu_1,\hat\mu_2]_{\mathsf{LTS}},f]_{\mathsf{LTS}}+\half[[\hat\psi,\hat\psi]_{\mathsf{LTS}},f]_{\mathsf{LTS}}=0,\\
~(d_1\circ d_2+d_2\circ d_1)(f)&=&[\hat\psi,[\hat\mu_1,f]_{\mathsf{LTS}}]_{\mathsf{LTS}}+[\hat\mu_1,[\hat\psi,f]_{\mathsf{LTS}}]_{\mathsf{LTS}}\\
~ &=&[[\hat\psi,\hat\mu_1]_{\mathsf{LTS}},f]_{\mathsf{LTS}}=0,\\
~ (d_2\circ d_2)(f)&=&[\hat\mu_1,[\hat\mu_1,f]_{\mathsf{LTS}}]_{\mathsf{LTS}}=\half[[\hat\mu_1,\hat\mu_1]_{\mathsf{LTS}},f]_{\mathsf{LTS}}=0.
\end{eqnarray*}
Thus we obtain that $\displaystyle{\sum_{i+j=n,\atop n\geqslant 0}d_i\circ d_j=0}$ and we can construct the higher derived brackets on $C^*_{\mathsf{LTS}}(\huaG,\huaG)$ as follows:
\begin{eqnarray*}
l_1(f)&=&[\hat\mu_2,f]_{\mathsf{LTS}},\\
~l_2(f,g)&=&[[\hat\psi,f]_{\mathsf{LTS}},g]_{\mathsf{LTS}},\\
~l_3(f,g,h)&=&[[[\hat\mu_1,f]_{\mathsf{LTS}},g]_{\mathsf{LTS}},h]_{\mathsf{LTS}},\\
~ l_k&=&0,\quad k\geqslant 4,
\end{eqnarray*}
for all $f\in C^p_{\mathsf{LTS}}(\huaG,\huaG),~g\in C^q_{\mathsf{LTS}}(\huaG,\huaG),~h\in C^r_{\mathsf{LTS}}(\huaG,\huaG)$. It is direct to see that $C^*_{\mathsf{LTS}}(\g_2,\g_1)$ is an abelian subalgebra of $(C^*_{\mathsf{LTS}}(\huaG,\huaG),[\cdot,\cdot]_{\mathsf{LTS}})$, and we show that $l_1,l_2,l_3$ are closed on $C^*_{\mathsf{LTS}}(\g_2,\g_1)$ in the sequel.  For all $f\in C^p_{\mathsf{LTS}}(\g_2,\g_1)$, we have $||\hat f||=-1|2p+1$. Then by Lemma \ref{lem:bidegree}, we deduce that $||l_1(f)||=||[\hat\mu_2,\hat f]_{\mathsf{LTS}}||=-1|2p+3$, which implies that $l_1(f)\in C^*_{\mathsf{LTS}}(\g_2,\g_1)$. Similarly, for all $g\in C^q_{\mathsf{LTS}}(\g_2,\g_1)$ and $h\in C^r_{\mathsf{LTS}}(\g_2,\g_1)$, we have
$$||l_2(f,g)||=-1|2p+2q+3,\quad ||l_3(f,g,h)||=-1|2p+2q+2r+3,$$
and thus $l_2(f,g),l_3(f,g,h)\in C^*_{\mathsf{LTS}}(\g_2,\g_1)$, which implies that $l_1,l_2$ and $l_3$ are closed on $C^*_{\mathsf{LTS}}(\g_2,\g_1)$. Consequently, by Proposition \ref{derived}, we obtain that $(C^*_{\mathsf{LTS}}(\g_2,\g_1),l_1,l_2,l_3)$ is an $L_\infty$-algebra.
\end{proof}

\section{Twisting of twilled Lie triple systems}
Let $f\in C^0_{\mathsf{LTS}}(\huaG,\huaG)$ be a $0$-cochain. Set
$$\exp(X_f)(\cdot):=\sum_{k=0}^\infty \frac{1}{k!}X_f^k,$$
where $X_f^k:=\underbrace{[\cdots[[}_k\cdot,f]_{\mathsf{LTS}},f]_{\mathsf{LTS}},\cdots,f]_{\mathsf{LTS}}$. Note that $\exp(X_f)(\cdot)$ is not well defined in general.

Let $(\huaG,\g_1,\g_2)$ be a proto-twilled Lie triple system with the structure $\Theta:=\hat\phi_1+\hat\mu_1+\hat\psi+\hat\mu_2+\hat\phi_2$ and $\hat H\in C^0_{\mathsf{LTS}}(\huaG,\huaG)$ be the lift of a linear map $H:\g_2\longrightarrow\g_1$. Then $\exp(X_{\hat H})(\cdot)$ is a well-defined operator since $\hat H\circ \hat H=0$.

\begin{defi}
With the above notations, the transformation $\exp(X_{\hat H})(\Theta)$ is called a {\bf twisting} of $\Theta$ by $H$, which is denoted by $\Theta^H$.
\end{defi}

We need to show that the twisting of the structure is again a Lie triple system structure. Before this, we show a property of the twisting.
\begin{lem}\label{twit}
We have the following equation:
$$\Theta^H=\exp(-\hat H)\circ \Theta\circ \Big(\exp(\hat H)\ot \exp(\hat H)\ot \exp(\hat H)\Big).$$
\end{lem}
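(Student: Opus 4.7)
The strategy is to interpret the operator $X_{\hat H}=[\,\cdot\,,\hat H]_{\mathsf{LTS}}$ acting on $\Theta$ as a sum of two \emph{commuting} operators: one that inserts $\hat H$ into each input slot, and one that post-composes with $\hat H$. Once we have this decomposition, the exponential factorises and produces the conjugation formula.

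First, I would unwind the Nambu bracket $[\Theta,\hat H]_{\mathsf{LTS}}=\Theta\circ \hat H-\hat H\circ \Theta$ using the definition \eqref{control} with $p=1$, $q=0$. A direct reading of the three sums (in which $\mathbb{S}(0,0)$ and $\mathbb{S}(1,0)$ are trivial) yields, for any $\huaX_1=x_1\otimes y_1\in\ot^2\huaG$ and $x\in\huaG$,
\begin{eqnarray*}
[\Theta,\hat H]_{\mathsf{LTS}}(\huaX_1,x)
&=&\Theta(\hat H(x_1)\otimes y_1,x)+\Theta(x_1\otimes \hat H(y_1),x)+\Theta(\huaX_1,\hat H(x))\\
&&-\hat H\bigl(\Theta(\huaX_1,x)\bigr).
\end{eqnarray*}
Thus, setting
$L(\Theta):=\Theta\circ(\hat H\otimes \Id\otimes \Id+\Id\otimes \hat H\otimes \Id+\Id\otimes \Id\otimes \hat H)$
and
$R(\Theta):=\hat H\circ \Theta$,
we obtain $X_{\hat H}=L-R$ as operators on $C^{*}_{\mathsf{LTS}}(\huaG,\huaG)$.

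The next step is to observe that $L$ and $R$ commute: $L$ only touches the input factors of $\Theta$ while $R$ acts only on the output slot, so $LR(\Theta)=RL(\Theta)=\hat H\circ \Theta\circ(\hat H\otimes \Id\otimes \Id+\Id\otimes \hat H\otimes \Id+\Id\otimes \Id\otimes \hat H)$. Because $\hat H\circ \hat H=0$, both $L^{4}$ and $R^{2}$ vanish when applied to $\Theta$, so all exponentials below are finite sums and hence well-defined. Using $[L,R]=0$, we have $\exp(L-R)=\exp(L)\exp(-R)$.

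It then remains to identify the two factors. The computation $\exp(-R)(\Theta)=\Theta-\hat H\circ\Theta=(\Id-\hat H)\circ\Theta=\exp(-\hat H)\circ\Theta$ is immediate from $\hat H^2=0$. For $\exp(L)(\Theta)$, I would expand $L^{k}(\Theta)$ using the multinomial theorem: writing $D_{i}$ for the operator inserting $\hat H$ in the $i$-th input slot ($i=1,2,3$), the $D_{i}$'s commute and each squares to zero, so
$$\tfrac{1}{k!}L^{k}(\Theta)=\tfrac{1}{k!}(D_{1}+D_{2}+D_{3})^{k}(\Theta)=\sum_{i+j+l=k,\ i,j,l\in\{0,1\}}D_{1}^{i}D_{2}^{j}D_{3}^{l}(\Theta),$$
and summing over $k=0,1,2,3$ produces precisely the $2^{3}=8$ terms of $\Theta\circ\bigl((\Id+\hat H)\otimes(\Id+\hat H)\otimes(\Id+\hat H)\bigr)=\Theta\circ\bigl(\exp(\hat H)\otimes \exp(\hat H)\otimes \exp(\hat H)\bigr)$. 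Combining the two factors yields
$$\Theta^{H}=\exp(X_{\hat H})(\Theta)=\exp(-\hat H)\circ\Theta\circ\bigl(\exp(\hat H)\otimes \exp(\hat H)\otimes \exp(\hat H)\bigr),$$
as claimed. The only place where care is required is the first step: a clean bookkeeping of the Nambu composition \eqref{control} so that the splitting $X_{\hat H}=L-R$ is transparent; once this is in hand, commutativity of $L$ and $R$ and the collapse of the exponentials are mechanical.
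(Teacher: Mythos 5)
Your proof is correct, and it shares its first step with the paper's: both unwind $[\Theta,\hat H]_{\mathsf{LTS}}$ via the Nambu composition \eqref{control} (with $p=1$, $q=0$) to obtain
$$X_{\hat H}(\Theta)=\Theta\circ(\hat H\ot\Id\ot\Id)+\Theta\circ(\Id\ot \hat H\ot\Id)+\Theta\circ(\Id\ot\Id\ot\hat H)-\hat H\circ \Theta,$$
which is exactly the paper's Eq.~\eqref{1order}. From there the two arguments diverge. The paper iterates the bracket by hand, computing $X_{\hat H}^{2}(\Theta)$, $X_{\hat H}^{3}(\Theta)$, $X_{\hat H}^{4}(\Theta)$ explicitly (with the factors $2$, $6$, $24$) together with the vanishing $X_{\hat H}^{k}(\Theta)=0$ for $k\geqslant 5$, and then matches these terms one by one against the sixteen-term expansion of $(\Id-\hat H)\circ\Theta\circ\bigl((\Id+\hat H)\ot(\Id+\hat H)\ot(\Id+\hat H)\bigr)$. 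You instead organize the same combinatorics structurally: $X_{\hat H}=L-R$ with $L=D_1+D_2+D_3$ the sum of the slot-insertion operators and $R=\hat H\circ(\cdot)$, these commute, each $D_i$ and $R$ is square-zero, so $\exp(L-R)=\exp(L)\exp(-R)$ collapses at once to the conjugation formula. This buys a transparent explanation of where the coefficients $1/k!$ go (they are absorbed by the multinomial coefficients of the commuting nilpotents) and removes the risk of bookkeeping slips in the explicit expansion (the paper's displayed formula for $X_{\hat H}^{3}(\Theta)$ in fact has a typo in its last term). The one point you should make explicit is that the identity $X_{\hat H}=L-R$, derived for trilinear maps, applies at every stage of the iteration because $\hat H$ has degree $0$, so $X_{\hat H}$ preserves $C^{1}_{\mathsf{LTS}}(\huaG,\huaG)$ and every iterate $X_{\hat H}^{k}(\Theta)$ is again a $1$-cochain; your phrase ``as operators on $C^{*}_{\mathsf{LTS}}(\huaG,\huaG)$'' slightly overstates this, since for $p$-cochains with $p\geqslant 2$ the insertion operator has more slots.
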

\begin{proof}
For all $(x_1,u_1),(x_2,u_2),(x_3,u_3)\in \huaG$, we have
\begin{eqnarray*}
~ &&[\Theta,\hat H]_{\mathsf{LTS}}\Big((x_1,u_1),(x_2,u_2),(x_3,u_3)\Big)\\
~ &=&(\Theta\circ\hat H-\hat H\circ\Theta)\Big((x_1,u_1),(x_2,u_2),(x_3,u_3)\Big)\\
~ &=&\Theta\bigg(H(x_1,u_1),(x_2,u_2),(x_3,u_3)\bigg)+\Theta\bigg((x_1,u_1),H(x_2,u_2),(x_3,u_3)\bigg)+\Theta\bigg((x_1,u_1),(x_2,u_2),H(x_3,u_3)\bigg)\\
~ &&-\hat H\bigg(\Theta\Big((x_1,u_1),(x_2,u_2),(x_3,u_3)\Big)\bigg)\\
~ &=&\Theta\bigg((H(u_1),0),(x_2,u_2),(x_3,u_3)\bigg)+\Theta\bigg((x_1,u_1),(H(u_2),0),(x_3,u_3)\bigg)+\Theta\bigg((x_1,u_1),(x_2,u_2),(H(u_3),0)\bigg)\\
~ &&-\hat H\bigg(\Theta\Big((x_1,u_1),(x_2,u_2),(x_3,u_3)\Big)\bigg).
\end{eqnarray*}
Thus we obtain that
\begin{eqnarray}\label{1order}
X_{\hat H}(\Theta)=\Theta\circ(\hat H\ot\Id\ot\Id)+\Theta\circ (\Id\ot \hat H\ot\Id)+\Theta\circ(\Id\ot\Id\ot\hat H)-\hat H\circ \Theta.
\end{eqnarray}
By Eq. \eqref{1order} and the fact that $\hat H\circ \hat H=0$, we compute that
\begin{eqnarray*}
~&& [[\Theta,\hat H]_{\mathsf{LTS}},\hat H]_{\mathsf{LTS}}\Big((x_1,u_1),(x_2,u_2),(x_3,u_3)\Big)\\
~ &=&[\Theta,\hat H]_{\mathsf{LTS}}\Big((H(u_1),0),(x_2,u_2),(x_3,u_3)\Big)+[\Theta,\hat H]_{\mathsf{LTS}}\Big((x_1,u_1),(H(u_2),0),(x_3,u_3)\Big)\\
~ &&+[\Theta,\hat H]_{\mathsf{LTS}}\Big((x_1,u_1),(x_2,u_2),(H(u_3),0)\Big)-\hat H\circ [\Theta,\hat H]_{\mathsf{LTS}}\Big((x_1,u_1),(x_2,u_2),(x_3,u_3)\Big)\\
~ &=&2\Theta\Big((H(u_1),0),(H(u_2),0),(x_3,u_3)\Big)+2\Theta\Big((H(u_1),0),(x_2,u_2),(H(u_3),0)\Big)\\
~ &&+2\Theta\Big((x_1,u_1),(H(u_2),0),(H(u_3),0)\Big)-2\hat H\bigg(\Theta\Big((H(u_1),0),(x_2,u_2),(x_3,u_3)\Big)\bigg)\\
~ &&-2\hat H\bigg(\Theta\Big((x_1,u_1),(H(u_2),0),(x_3,u_3)\Big)\bigg)-2\hat H\bigg(\Theta\Big((x_1,u_1),(x_2,u_2),(H(u_3),0)\Big)\bigg).
\end{eqnarray*}
Hence we obtain that
\begin{eqnarray}
\nonumber X_{\hat H}^2(\Theta)&=&2\Theta\circ (\hat H\ot\hat H\ot\Id)+2\Theta\circ (\hat H\ot\Id\ot\hat H)+2\Theta\circ (\Id\ot\hat H\ot\hat H)\\
~ &&-2\hat H\circ\Theta\circ(\hat H\ot\Id\ot\Id)-2\hat H\circ\Theta\circ(\Id \ot\hat H\ot\Id)-2\hat H\circ\Theta\circ(\Id\ot\Id\ot\hat H).
\end{eqnarray}
Similarly, we obtain that
\begin{eqnarray}
X_{\hat H}^3(\Theta)&=&6\Theta\circ(\hat H\ot\hat H\ot\hat H)-6\hat H\circ\Theta\circ(\hat H\ot\hat H\ot\Id)\\
~ &&\nonumber-6\hat H\circ\Theta\circ(\hat H\ot\Id\ot\hat H)-6\hat H\circ\Theta\circ(\Id\ot\Id\ot\hat H),\\
~X_{\hat H}^4(\Theta)&=&-24\hat H\circ\Theta\circ(\hat H\ot\hat H\ot\hat H),\\
~X_{\hat H}^k(\Theta)&=&0,\quad \forall k\geqslant 5.\label{5order}
\end{eqnarray}
By Eqs. \eqref{1order}-\eqref{5order} and by using the fact that $\hat H\circ \hat H=0$ again, we have
\begin{eqnarray*}
~&&\exp(-\hat H)\circ \Theta\circ\Big(\exp(\hat H)\ot \exp(\hat H)\ot \exp(\hat H)\Big)\\
~ &=&\Big(\Id-\hat H\Big)\circ\Theta\circ \Big((\Id+\hat H)\ot(\Id+\hat H)\ot(\Id+\hat H)\Big)\\
~ &=&\Theta\circ(\Id\ot\Id\ot\Id)+\Theta\circ(\Id\ot\hat H\ot\Id)+\Theta\circ(\hat H\ot\Id\ot\Id)+\Theta\circ(\hat H\ot\hat H\ot\Id)\\
~ &&+\Theta\circ(\Id\ot\Id\ot\hat H)+\Theta\circ(\Id\ot\hat H\ot\hat H)+\Theta\circ(\hat H\ot\Id\ot\hat H)+\Theta\circ(\hat H\ot\hat H\ot\hat H)\\
~ &&-\hat H\circ\Theta\circ(\Id\ot\Id\ot\Id)-\hat H\circ\Theta\circ(\Id\ot\hat H\ot\Id)-\hat H\circ\Theta\circ(\hat H\ot\Id\ot\Id)-\hat H\circ\Theta\circ(\hat H\ot\hat H\ot\Id)\\
~ &&-\hat H\circ\Theta\circ(\Id\ot\Id\ot\hat H)-\hat H\circ\Theta\circ(\Id\ot\hat H\ot\hat H)-\hat H\circ\Theta\circ(\hat H\ot\Id\ot\hat H)-\hat H\circ\Theta\circ(\hat H\ot\hat H\ot\hat H)\\
~ &=&\Theta+X_{\hat H}(\Theta)+\half X_{\hat H}^2(\Theta)+\frac{1}{3!}X_{\hat H}^3(\Theta)+\frac{1}{4!}X_{\hat H}^4(\Theta)\\
~ &=&\Theta^H.
\end{eqnarray*}
This completes the proof.
\end{proof}

\begin{pro}
The twisting $\Theta^H$ is a Lie triple system structure on $\huaG$.
\end{pro}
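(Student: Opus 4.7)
The plan is to leverage Lemma \ref{twit}, which rewrites the twisted structure as a conjugate of $\Theta$ by the endomorphism $\exp(\hat H)$. Because $\hat H\circ\hat H=0$, the exponential series terminates and we have $\exp(\hat H)=\Id+\hat H$ with inverse $\exp(-\hat H)=\Id-\hat H$; set $\Phi:=\Id+\hat H$, which is therefore a linear automorphism of $\huaG=\g_1\oplus\g_2$. Lemma \ref{twit} then reads
$$\Theta^H(X_1,X_2,X_3)=\Phi^{-1}\,\Theta\bigl(\Phi X_1,\Phi X_2,\Phi X_3\bigr),\qquad \forall\,X_1,X_2,X_3\in\huaG,$$
which exhibits $\Phi$ as an isomorphism of trilinear algebras from $(\huaG,\Theta^H)$ to $(\huaG,\Theta)$.

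With this formulation in hand, verifying the Lie triple system axioms becomes transport of structure. First, $\Theta^H(X,X,Y)=\Phi^{-1}\Theta(\Phi X,\Phi X,\Phi Y)=0$ since $\Theta$ is skew in the first two slots; together with the analogous argument for the cyclic identity \eqref{LY1}, this confirms that $\Theta^H$ lies in the restricted space $C^1_{\mathsf{LTS}}(\huaG,\huaG)$. Similarly, applying $\Phi^{-1}$ to the fundamental identity \eqref{fundamental} for $\Theta$, evaluated on the transformed arguments $\Phi X_i$, produces \eqref{fundamental} for $\Theta^H$ on the original arguments.

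More succinctly, the Maurer-Cartan equation $[\Theta,\Theta]_{\mathsf{LTS}}=0$ is manifestly preserved under conjugation by the linear isomorphism $\Phi$, so $[\Theta^H,\Theta^H]_{\mathsf{LTS}}=0$; by the correspondence between Maurer-Cartan elements of $(C^*_{\mathsf{LTS}}(\huaG,\huaG),[\cdot,\cdot]_{\mathsf{LTS}})$ and Lie triple system structures recalled at the end of Section 2, this is exactly the desired conclusion. The only mild bookkeeping obstacle is confirming that the Nambu bracket on $C^*_{\mathsf{LTS}}$ intertwines with the componentwise pullback by $\Phi$; this is routine since the bracket is built from composition and shuffle-permutations, both of which commute with applying $\Phi$ in the output slot and $\Phi$ in each input slot. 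No further expansion of the exponential is required beyond what Lemma \ref{twit} already provides.
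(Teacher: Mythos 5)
Your proposal is correct and follows essentially the same route as the paper: both verify the alternating and cyclic identities by transporting them through the conjugation formula of Lemma \ref{twit}, and both establish the fundamental identity by observing that $[\Theta^H,\Theta^H]_{\mathsf{LTS}}=\exp(-\hat H)\circ[\Theta,\Theta]_{\mathsf{LTS}}\circ\bigl(\exp(\hat H)\ot\exp(\hat H)\ot\exp(\hat H)\bigr)=0$. The intertwining of the Nambu bracket with the conjugation, which you flag as the only bookkeeping point, is used (implicitly) in the paper's computation as well.
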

\begin{proof}
For all $(x,u),(y,v),(z,w)\in \huaG$, one has
\begin{eqnarray*}
~ &&\Theta^H((x,u),(x,u),(y,v))\\
~ &=&\exp(-\hat H)\circ \Theta\circ\Big(\exp(\hat H)\ot\exp(\hat H)\ot\exp(\hat H)\Big)((x,u),(x,u),(y,v))\\
~ &=&\Big(\Id-\hat H\Big)\bigg(\Theta\Big((x+H(u),u),(x+H(u),u),(y+H(v),v)\Big)\bigg)=0,
\end{eqnarray*}
and
\begin{eqnarray*}
~ &&\Theta^H((x,u),(y,v),(z,w))+c.p.\\
~ &=&\exp(-\hat H)\bigg(\Theta\Big((x+H(u),u),(y+H(v),v),(z+H(w),w)\Big)+c.p.\bigg)\\
~ &=&0.
\end{eqnarray*}
Here, the last equalities hold since $\Theta$ is a Lie triple system structure. Moreover, one needs to show that $\Theta^{\hat H}$ satisfies the fundamental identity, or equivalently, $\Theta^{\hat H}$ satisfies the following Maurer-Cartan equation:
 $$[\Theta^H,\Theta^H]_{\mathsf{LTS}}=0.$$
 In fact, by Lemma \ref{twit}, we have
\begin{eqnarray*}
~[\Theta^H,\Theta^H]_{\mathsf{LTS}}&=&2\Theta^H\circ\Theta^H\\
~ &=&2\exp(-\hat H)\circ (\Theta\circ\Theta)\circ \Big(\exp(\hat H)\ot \exp(\hat H)\ot \exp(\hat H)\Big)\\
~ &=&\exp(-\hat H)\circ [\Theta,\Theta]_{\mathsf{LTS}}\circ \Big(\exp(\hat H)\ot \exp(\hat H)\ot \exp(\hat H)\Big)=0
\end{eqnarray*}
This completes the proof.
\end{proof}

\begin{cor}
The linear map
$$\exp(\hat H):(\huaG,\Theta^H)\longrightarrow(\huaG,\Theta)$$
is an isomorphism between Lie triple systems.
\end{cor}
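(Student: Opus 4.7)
The plan is to deduce the corollary directly from Lemma \ref{twit} together with the nilpotency $\hat H\circ\hat H=0$. The corollary requires two things: that $\exp(\hat H)$ is a linear bijection on $\huaG$, and that it intertwines the two Lie triple system structures, i.e. $\exp(\hat H)\circ\Theta^H=\Theta\circ\bigl(\exp(\hat H)\ot\exp(\hat H)\ot\exp(\hat H)\bigr)$.

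First I would observe that since $\hat H$ is the lift of $H:\g_2\to\g_1$, one has $\hat H\circ\hat H=0$, hence the series collapses: $\exp(\hat H)=\Id+\hat H$ and $\exp(-\hat H)=\Id-\hat H$. A direct computation then gives $\exp(\hat H)\circ\exp(-\hat H)=\Id-\hat H^2=\Id=\exp(-\hat H)\circ\exp(\hat H)$, so $\exp(\hat H)$ is a linear automorphism of $\huaG$ with inverse $\exp(-\hat H)$.

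Next I would invoke Lemma \ref{twit}, which states
\[
\Theta^H=\exp(-\hat H)\circ\Theta\circ\bigl(\exp(\hat H)\ot\exp(\hat H)\ot\exp(\hat H)\bigr).
\]
Composing both sides on the left with $\exp(\hat H)$ and using the inverse relation established above yields
\[
\exp(\hat H)\circ\Theta^H=\Theta\circ\bigl(\exp(\hat H)\ot\exp(\hat H)\ot\exp(\hat H)\bigr),
\]
which is exactly the condition for $\exp(\hat H)$ to be a morphism of Lie triple systems from $(\huaG,\Theta^H)$ to $(\huaG,\Theta)$. Combined with the bijectivity established in the first step, this shows that $\exp(\hat H)$ is an isomorphism of Lie triple systems.

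There is essentially no obstacle here: the whole argument is a bookkeeping consequence of Lemma \ref{twit} and the fact that $\hat H$ squares to zero, so the exponentials truncate at degree one. The only thing one must be careful about is to apply $\exp(\hat H)$ on the correct side (the target) so that the cancellation $\exp(\hat H)\circ\exp(-\hat H)=\Id$ lands between $\Theta$ and its twist, rather than trying to manipulate the tensor factors on the source side.
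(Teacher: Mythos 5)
Your argument is correct and matches the paper's intent: the paper states this corollary without proof, treating it as an immediate consequence of Lemma \ref{twit} and the nilpotency $\hat H\circ\hat H=0$, which is exactly the rearrangement you carry out.
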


Let $(\huaG,\g_1,\g_2)$ be a proto-twilled Lie triple system with its structure $\Theta$. It is easy to see that $(\huaG,\Theta^H)$ is also a proto-twilled Lie triple system, and thus $\Theta^H$ can also be decomposed into 5 terms: $\Theta^H=\hat\phi_1^H+\hat\mu_1^H+\hat\psi^H+\hat\mu_2^H+\hat\phi_2^H$, where the bidegrees of $\hat\phi_1^H,\hat\mu_1^H,\hat\psi^H,\hat\mu_2^H$ and $\hat\phi_2^H$ are $3|-1,2|0,1|1,0|2$ and $-1|3$ respectively. Then the decomposed terms of twisting operations are determined by the following result.
\begin{thm}\label{twilledtwisting}
With the above notations, we have
\begin{eqnarray*}
\left\{\begin{array}{rcl}
\hat\phi_1^H&=&\hat\phi_1,\\
\hat\mu_1^H&=&\hat\mu_1+X_{\hat H}(\hat \phi_1),\\
\hat\psi^H&=&\hat\psi+X_{\hat H}(\hat\mu_1)+\half X_{\hat H}^2(\hat\phi_1),\\
\hat\mu_2^H&=&\hat\mu_2+X_{\hat H}(\hat\psi)+\half X_{\hat H}^2(\hat\mu_1)+\frac{1}{6}X_{\hat H}^3(\hat\phi_1),\\
\hat\phi_2^H&=&\hat\phi_2+X_{\hat H}(\hat\mu_2)+\half X_{\hat H}^2(\hat\psi)+\frac{1}{6}X_{\hat H}^3(\hat\mu_1)+\frac{1}{24}X_{\hat H}^4(\hat\phi_1).
\end{array}\right.
\end{eqnarray*}
\end{thm}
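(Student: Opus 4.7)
The plan is to expand the twisting as a formal series and to isolate contributions by bidegree. By definition,
\[
\Theta^H=\exp(X_{\hat H})(\Theta)=\sum_{n\geqslant 0}\frac{1}{n!}X_{\hat H}^n(\hat\phi_1+\hat\mu_1+\hat\psi+\hat\mu_2+\hat\phi_2),
\]
and since $||\hat H||=-1|1$, Lemma \ref{lem:bidegree} tells me that each application of $X_{\hat H}=[\cdot,\hat H]_{\mathsf{LTS}}$ shifts bidegree by $-1|1$. Consequently $X_{\hat H}^n$ sends the five starting pieces, of bidegrees $3|{-1},\ 2|0,\ 1|1,\ 0|2,\ {-1}|3$, to homogeneous maps of bidegrees $3-n|{-1}+n$, $2-n|n$, $1-n|1+n$, $-n|2+n$, $-1-n|3+n$ respectively.

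Next I would observe that a homogeneous map whose bidegree $(l|m)$ has $l\leqslant -2$ or $m\leqslant -2$ must be identically zero, because by the definition of bidegree its nontrivial inputs would have to lie in $\g^{l+1,m}$ or $\g^{l,m+1}$, both of which are empty tensor spaces. Applying this vanishing criterion to the bidegree list above gives
\begin{eqnarray*}
X_{\hat H}^n(\hat\phi_1)=0~(n\geqslant 5),\quad X_{\hat H}^n(\hat\mu_1)=0~(n\geqslant 4),\quad X_{\hat H}^n(\hat\psi)=0~(n\geqslant 3),\\
X_{\hat H}^n(\hat\mu_2)=0~(n\geqslant 2),\quad X_{\hat H}^n(\hat\phi_2)=0~(n\geqslant 1),
\end{eqnarray*}
so the series for $\Theta^H$ truncates to a finite sum.

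To finish, I apply Lemma \ref{lem:decom} to the $1$-cochain $\Theta^H$, which guarantees the canonical decomposition $\Theta^H=\hat\phi_1^H+\hat\mu_1^H+\hat\psi^H+\hat\mu_2^H+\hat\phi_2^H$ with the prescribed bidegrees $3|{-1},\ 2|0,\ 1|1,\ 0|2,\ {-1}|3$. For each such target bidegree $(a|b)$, I collect from the truncated expansion exactly those terms $\frac{1}{n!}X_{\hat H}^n(\hat\alpha)$ whose bidegree is $(a|b)$, noting that for each target only one value of $n$ works for each starting piece $\hat\alpha\in\{\hat\phi_1,\hat\mu_1,\hat\psi,\hat\mu_2,\hat\phi_2\}$. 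By Lemma \ref{lemma1} (uniqueness of the homogeneous decomposition) this assignment reads off the five stated formulas; for instance, bidegree $0|2$ receives precisely $\hat\mu_2$, $X_{\hat H}(\hat\psi)$, $\tfrac{1}{2}X_{\hat H}^2(\hat\mu_1)$ and $\tfrac{1}{6}X_{\hat H}^3(\hat\phi_1)$, yielding the fourth line of the theorem.

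The only real obstacle is the bidegree bookkeeping; once the shift property $||X_{\hat H}f||=||f||+(-1|1)$ and the vanishing criterion for $l<-1$ or $m<-1$ are in place, the formulas are forced by the uniqueness of the bidegree decomposition, so no further computation is required.
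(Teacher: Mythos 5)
Your proof is correct and follows essentially the same route as the paper: expand $\exp(X_{\hat H})(\Theta)$, use the bidegree shift $||X_{\hat H}f||=||f||+(-1|1)$ from Lemma \ref{lem:bidegree}, and read off each component of $\Theta^H$ by collecting the unique contributions of each target bidegree via Lemma \ref{lemma1}. The only cosmetic difference is that you justify the truncation and the vanishing of terms such as $X_{\hat H}(\hat\phi_2)$ by an out-of-range-bidegree criterion, whereas the paper invokes the explicit computation $X_{\hat H}^k(\Theta)=0$ for $k\geqslant 5$ from the proof of Lemma \ref{twit} together with Lemma \ref{lem:0}; the two justifications are equivalent.
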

\begin{proof}
By the proof of Lemma \ref{twit}, we have
\begin{eqnarray}
\Theta^H=\Theta+X_{\hat H}(\Theta)+\half X_{\hat H}^2(\Theta)+\frac{1}{3!}X_{\hat H}^3(\Theta)+\frac{1}{4!}X_{\hat H}^4(\Theta).\label{twisting}
\end{eqnarray}
Thus the first term is $\Theta=\hat\phi_1+\hat\mu_1+\hat\psi+\hat\mu_2+\hat\phi_2$. By linearity of $X_{\hat H}(\cdot)$ and by Lemma \ref{lem:0} which leads to $X_{\hat H}(\phi_2)=0$, we have
$$X_{\hat H}(\Theta)=X_{\hat H}(\phi_1)+X_{\hat H}(\mu_1)+X_{\hat H}(\psi)+X_{\hat H}(\mu_2).$$
Here the bidegrees of each terms is $||X_{\hat H}(\phi_1)||=2|0,||X_{\hat H}(\mu_1)||=1|1,||X_{\hat H}(\psi)||=0|2$ and $||X_{\hat H}(\mu_2)||=-1|3$ respectively.
Similarly, the remaining terms are
$$X_{\hat H}^2(\Theta)=X_{\hat H}^2(\phi_1)+X_{\hat H}^2(\mu_1)+X_{\hat H}^2(\psi),$$
where $||X_{\hat H}^2(\phi_1)||=1|1,||X_{\hat H}^2(\mu_1)||=0|2,||X_{\hat H}^2(\psi)||=-1|3$;
$$X_{\hat H}^3(\Theta)=X_{\hat H}^3(\phi_1)+X_{\hat H}^2(\mu_1),$$
where $||X_{\hat H}^3(\phi_1)||=0|2$ and $||X_{\hat H}^2(\mu_1)||=-1|3$;
and
$$X_{\hat H}^4(\Theta)=X_{\hat H}^4(\hat \phi_1)$$ whose bidegree is $||X_{\hat H}^4(\hat \phi_1)||=-1|3$.
Thus by Eq. \eqref{twisting} and the bidegree convention, we obtain that the $3|-1$-component is $\hat\phi_1$, which gives $\hat\phi_1^H$; the sum of all $2|0$-components is $\hat\mu_1+X_{\hat H}(\hat\phi_1)$, which gives $\hat\mu_1^H$; the sum of all $1|1$-components is $\hat\psi+X_{\hat H}(\hat\mu_1)+\half X_{\hat H}^2(\hat\phi_1)$, which gives $\hat\psi^H$; the sum of all $0|2$-components is $\hat\mu_2+X_{\hat H}(\hat\psi)+\half X_{\hat H}(\hat\mu_1)+\frac{1}{6}X_{\hat H}^3(\hat\phi_1)$, which gives $\hat\mu_2^H$; and the sum of all $-1|3$-components is $\hat\phi_2+X_{\hat H}(\hat\mu_2)+\half X_{\hat H}(\hat\psi)+\frac{1}{6}X_{\hat H}^3(\hat\mu_1)+\frac{1}{24}X_{\hat H}^4(\hat\phi_1)$, which gives $\hat\phi_2^H$. This finishes the proof.
\end{proof}

Now we are ready to give the Maurer-Cartan characterizations of twisting of twilled Lie triple systems.
\begin{thm}\label{MC}
Let $((\huaG,\Theta),\g_1,\g_2)$ be a twilled Lie triple system, and $H\in C^0(\g_2,\g_1)$ a linear map. Then the twisting $((\huaG,\Theta^H),\g_1,\g_2)$ is also a twilled Lie triple system if and only if $H$ is a Maurer-Cartan element of the $L_\infty$-algebra $(C^*_{\mathsf{LTS}}(\g_2,\g_1),l_1,l_2,l_3)$ constructed in Theorem \ref{Linfty}, i.e., $H$ satisfies the following Maurer-Cartan equation
$$l_1(H)+\half l_2(H,H)+\frac{1}{3!}l_3(H,H,H)=0.$$
\end{thm}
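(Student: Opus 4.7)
The plan is to reduce the twilled condition on $\Theta^H$ to a bidegree-by-bidegree comparison using Theorem \ref{twilledtwisting}, and then recognize the surviving obstruction as the Maurer--Cartan equation for the $L_\infty$-algebra of Theorem \ref{Linfty}.

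First, since $((\huaG,\Theta),\g_1,\g_2)$ is twilled, Definition \ref{defi:twilled} gives $\hat\phi_1=0$ and $\hat\phi_2=0$, so $\Theta=\hat\mu_1+\hat\psi+\hat\mu_2$. By Definition \ref{defi:twilled} applied to $\Theta^H$, the twisted triple $((\huaG,\Theta^H),\g_1,\g_2)$ is twilled if and only if the two extreme-bidegree components $\hat\phi_1^H$ (bidegree $3|-1$) and $\hat\phi_2^H$ (bidegree $-1|3$) in the canonical decomposition of $\Theta^H$ both vanish. This is the right notion of the obstruction to extract, because the uniqueness of the bidegree decomposition (Lemma \ref{lemma1}) guarantees that each component can be isolated independently.

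Next, I would specialize the formulas of Theorem \ref{twilledtwisting} to our situation $\hat\phi_1=\hat\phi_2=0$. The first formula gives $\hat\phi_1^H=\hat\phi_1=0$ automatically. The remaining formulas collapse to
\begin{eqnarray*}
\hat\mu_1^H&=&\hat\mu_1,\\
\hat\psi^H&=&\hat\psi,\\
\hat\mu_2^H&=&\hat\mu_2,\\
\hat\phi_2^H&=&X_{\hat H}(\hat\mu_2)+\tfrac{1}{2}X_{\hat H}^2(\hat\psi)+\tfrac{1}{6}X_{\hat H}^3(\hat\mu_1),
\end{eqnarray*}
where the last line collects precisely the three derived-bracket expressions that appear in the $L_\infty$-structure of Theorem \ref{Linfty}.

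Finally, I would unfold the definition $X_{\hat H}(f)=[f,\hat H]_{\mathsf{LTS}}$ and match the terms:
\begin{eqnarray*}
X_{\hat H}(\hat\mu_2)&=&[\hat\mu_2,\hat H]_{\mathsf{LTS}}=l_1(H),\\
X_{\hat H}^2(\hat\psi)&=&[[\hat\psi,\hat H]_{\mathsf{LTS}},\hat H]_{\mathsf{LTS}}=l_2(H,H),\\
X_{\hat H}^3(\hat\mu_1)&=&[[[\hat\mu_1,\hat H]_{\mathsf{LTS}},\hat H]_{\mathsf{LTS}},\hat H]_{\mathsf{LTS}}=l_3(H,H,H).
\end{eqnarray*}
Therefore $\hat\phi_2^H=l_1(H)+\tfrac{1}{2}l_2(H,H)+\tfrac{1}{3!}l_3(H,H,H)$, and $\hat\phi_2^H=0$ is exactly the Maurer--Cartan equation stated in the theorem.

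The only subtle point, and the place I would be most careful, is matching the sign and ordering conventions between $X_{\hat H}$ and the higher derived brackets $l_k$: since $\hat H$ has degree $0$ and $\hat\mu_2,\hat\psi,\hat\mu_1$ have degree $1$, the graded commutator formula $[P,Q]_{\mathsf{LTS}}=P\circ Q-(-1)^{pq}Q\circ P$ makes $[\cdot,\hat H]_{\mathsf{LTS}}$ a derivation that agrees (up to the placements already used in the proof of Lemma \ref{twit}) with the iterated brackets defining $l_1,l_2,l_3$. Granting this identification, the equivalence is then immediate from the bidegree decomposition and Theorem \ref{twilledtwisting}.
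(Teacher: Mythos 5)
Your proposal follows essentially the same route as the paper: specialize Theorem \ref{twilledtwisting} to $\hat\phi_1=\hat\phi_2=0$, observe that the twilled condition on $\Theta^H$ reduces to $\hat\phi_2^H=0$, and identify $X_{\hat H}(\hat\mu_2)$, $X_{\hat H}^2(\hat\psi)$, $X_{\hat H}^3(\hat\mu_1)$ with $l_1(H)$, $l_2(H,H)$, $l_3(H,H,H)$; your explicit matching of these terms is in fact more detailed than the paper's one-line conclusion. However, two of your ``collapsed'' formulas are wrong: setting $\phi_1=\phi_2=0$ only removes the terms involving $\hat\phi_1$, so the correct specializations are $\hat\psi^H=\hat\psi+X_{\hat H}(\hat\mu_1)$ and $\hat\mu_2^H=\hat\mu_2+X_{\hat H}(\hat\psi)+\half X_{\hat H}^2(\hat\mu_1)$, not $\hat\psi^H=\hat\psi$ and $\hat\mu_2^H=\hat\mu_2$. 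This slip is not load-bearing---the equivalence only uses the (correctly stated) formula for $\hat\phi_2^H$---but as written it is internally inconsistent with your own expression for $\hat\phi_2^H$ (if all correction terms vanished, $\hat\phi_2^H$ would vanish too and the theorem would be vacuous), so the two middle lines should be corrected or simply omitted.
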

\begin{proof}
Write $\Theta=\hat\phi_1+\hat\mu_1+\hat\psi+\hat\mu_2+\hat\phi_2$, since $(\huaG,\Theta)$ is a twilled Lie triple system, then we have $\phi_1=\phi_2=0$. By Theorem \ref{twilledtwisting}, we have
\begin{eqnarray*}
\left\{\begin{array}{rcl}
\hat\mu_1^H&=&\hat\mu_1,\\
\hat\psi^H&=&\hat\psi+X_{\hat H}(\hat\mu_1),\\
\hat\mu_2^H&=&\hat\mu_2+X_{\hat H}(\hat\psi)+\half X_{\hat H}^2(\hat\mu_1),\\
\hat\phi_2^H&=&X_{\hat H}(\hat\mu_2)+\half X_{\hat H}^2(\hat\psi)+\frac{1}{6}X_{\hat H}^3(\hat\mu_1).
\end{array}\right.
\end{eqnarray*}
Thus the twisting $((\huaG,\Theta^H),\g_1,\g_2)$ is also a twilled Lie triple system if and only if $\hat\phi_2^H=0$, which implies that $H$ is a Maurer-Cartan element of the $L_\infty$-algebra $(C^*_{\mathsf{LTS}}(\g_2,\g_1),l_1,l_2,l_3)$.
\end{proof}

The twisting of twilled Lie triple systems gives rise to a Lie triple system structure on its decomposed subspace.
\begin{pro}
Let $((\huaG,\Theta),\g_1,\g_2)$ and its twisting $((\huaG,\Theta^H),\g_1,\g_2)$ both be twilled Lie triple systems. Then for all $u,v,w\in \g_2,$
\begin{eqnarray}
\Courant{u,v,w}_H&:=&\Courant{u,v,w}_2+\Courant{H(u),v,w}_2+\Courant{u,H(v),w}_2+\Courant{u,v,H(w)}_2\label{LTS}\\
~ &&\nonumber+\Courant{H(u),H(v),w}_2+\Courant{u,H(v),H(w)}_2+\Courant{H(u),v,H(w)}_2,
\end{eqnarray}
defines a Lie triple system structure on $\g_2$.
\end{pro}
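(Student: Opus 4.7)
The plan is to reduce the statement to the restriction of the twisted structure $\Theta^H$ to the subspace $\g_2$, and then to compute this restriction explicitly.

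First, since $((\huaG,\Theta^H),\g_1,\g_2)$ is assumed to be a twilled Lie triple system, Definition \ref{defi:twilled} tells us that $\g_2$ is a subalgebra of $(\huaG,\Theta^H)$. In particular, the restriction of $\Theta^H$ to $\g_2\otimes \g_2\otimes \g_2$ lands in $\g_2$, and since $\Theta^H$ is a Lie triple system structure on $\huaG$, this restriction automatically satisfies both \eqref{LY1} and the fundamental identity \eqref{fundamental}. So the entire content of the proposition reduces to verifying that this restriction coincides with the bracket $\Courant{\cdot,\cdot,\cdot}_H$ defined by \eqref{LTS}.

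For this verification I would invoke Lemma \ref{twit}, which gives
$$\Theta^H=\exp(-\hat H)\circ \Theta\circ\bigl(\exp(\hat H)\ot\exp(\hat H)\ot\exp(\hat H)\bigr).$$
For $u\in \g_2$, viewed as $(0,u)\in \huaG$, one has $\exp(\hat H)(0,u)=(H(u),u)$ since $\hat H\circ \hat H=0$. Substituting this into the formula above and expanding the trilinear map $\Theta$, I obtain $\Theta^H(u,v,w)$ as a sum of eight terms
$$\Theta\bigl((H(u),u),(H(v),v),(H(w),w)\bigr)$$
followed by an application of $(\Id-\hat H)$. The key step is then to take the $\g_2$-component. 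Since the original $(\huaG,\Theta)$ is a twilled Lie triple system, both $\g_1$ and $\g_2$ are subalgebras; equivalently $\phi_1=\phi_2=0$ in the decomposition \eqref{equi}. The vanishing of $\phi_1$ forces the $\g_2$-component of the all-$\g_1$ term $\Theta(H(u),H(v),H(w))$ to be zero, eliminating the only one of the eight terms not appearing in \eqref{LTS}. The correction $-\hat H$ only contributes to the $\g_1$-component, hence does not affect the $\g_2$-component. Comparing term by term yields exactly the right-hand side of \eqref{LTS}.

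The main obstacle, though really a bookkeeping issue rather than a conceptual one, is keeping straight the eight tensor terms in $\Theta(H(u){+}u,H(v){+}v,H(w){+}w)$ and checking which projections vanish under the assumptions $\phi_1=0$ (for the original) and $\phi_2^H=0$ (for the twist); the latter, which is precisely the Maurer-Cartan condition of Theorem \ref{MC}, is what ensures the $\g_1$-component of $\Theta^H(u,v,w)$ is zero so that the formula truly defines an operation on $\g_2$. Once these projections are carefully tabulated, the equality with \eqref{LTS} is immediate and the Lie triple system axioms on $\g_2$ follow for free from those of $\Theta^H$.
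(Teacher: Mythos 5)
Your proposal is correct, and its overall strategy agrees with the paper's: both arguments observe that the bracket in question is the restriction of the twisted structure to $\g_2$, so the Lie triple system axioms are inherited from $\Theta^H$ once the identification with \eqref{LTS} is made. The difference lies in how that identification is carried out. The paper works with the bidegree decomposition of Theorem \ref{MC}, writing $\hat\mu_2^H=\hat\mu_2+X_{\hat H}(\hat\psi)+\tfrac{1}{2}X_{\hat H}^2(\hat\mu_1)$ and evaluating each derived bracket on $(u,v,w)\in\ot^3\g_2$ via \eqref{control} and \eqref{equi}; the three summands produce respectively the first term, the next three terms, and the last three terms of \eqref{LTS}. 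You instead start from the conjugation formula of Lemma \ref{twit}, expand $\Theta\bigl((H(u),u),(H(v),v),(H(w),w)\bigr)$ by multilinearity into eight terms, and project onto $\g_2$. Your route makes two points more transparent: the eighth term $\Courant{H(u),H(v),H(w)}_2$ is absent precisely because $\phi_1=0$ for the original twilled structure, and the $-\hat H$ correction is invisible to the $\g_2$-component because $\hat H$ takes values in $\g_1$. You are also right to flag that the twilled hypothesis on $\Theta^H$ (equivalently $\hat\phi_2^H=0$) is genuinely needed: without it the $\g_1$-component of $\Theta^H(u,v,w)$ would not vanish and the fundamental identity would not descend to the projected bracket; the paper secures the same point by invoking Proposition \ref{pro:twilled} to conclude that $\hat\mu_2^H$ is itself a Lie triple system structure. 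Either computation is acceptable; yours trades the derived-bracket bookkeeping for the multilinear expansion, which is arguably more elementary.
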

\begin{proof}
Since $((\huaG,\Theta^H),\g_1,\g_2)$ is a twilled Lie triple system, by Proposition \ref{pro:twilled}, we deduce that $\hat\mu_2^H$ is a Lie triple system structure on $\huaG$. Moreover, for all $u,v,w\in \g_2$, by Eqs. \eqref{control} and \eqref{equi}, we compute that
\begin{eqnarray*}
\hat\mu_2(u,v,w)&=&\Courant{u,v,w}_2,\\
~X_{\hat H}(\hat\psi)(u,v,w)&=&[\hat\psi,\hat H]_{\mathsf{LTS}}(u,v,w)=\Big(\hat\psi\circ\hat H-\hat H\circ\hat\psi\Big)(u,v,w)\\
~ &=&\hat\psi(H(u),v,w)+\hat\psi(u,H(v),w)+\hat\psi(u,v,H(w))\\
~ &=&\Courant{H(u),v,w}_2+\Courant{u,H(v),w}_2+\Courant{u,v,H(w)}_2,\\
~\half X_{\hat H}^2(\hat\mu_1)(u,v,w)&=&\half[[\hat\mu_1,\hat H]_{\mathsf{LTS}},\hat H]_{\mathsf{LTS}}(u,v,w)\\
~ &=&\Courant{H(u),H(v),w}_2+\Courant{H(u),v,H(w)}_2+\Courant{u,H(v),H(w)}_2.
\end{eqnarray*}
Adding these terms yields Eq. \eqref{LTS}. This completes the proof.
\end{proof}

\section{Matched pairs and relative Rota-Baxter operators}
In this section, we examine the relationship between twilled Lie triple systems and matched pairs and relationship between twilled Lie triple systems and relative Rota-Baxter operators respectively in order to obtain the relationship between matched pairs of Lie triple systems and relative Rota-Baxter operators. This work help us achieve bialgebra theory for Lie triple systems.
\subsection{Matched pairs and twilled Lie triple systems}
The notion of matched pairs of Lie-Yamaguti algebras was introduced in \cite{ZQ}, and here we restrict the Lie-Yamaguti algebras to the context of Lie triple systems to give the definition of matched pairs of Lie triple systems.
\begin{defi}
A {\bf matched pair} of Lie triple systems consists of two Lie triple systems $(\g_1,\Courant{\cdot,\cdot,\cdot}_{\g_1})$ and $(\g_2,\Courant{\cdot,\cdot,\cdot}_{\g_2})$, and two linear maps $\rho_1:\ot^2\g_1\longrightarrow\gl(\g_2)$ and $\rho_2:\ot^2\g_2\longrightarrow\gl(\g_1)$, such that the following conditions are satisfied:
\begin{itemize}
\item[\rm(i)] $(\g_2;\rho_1)$ is a representation of $\g_1$;
\item[\rm (ii)] $(\g_1;\rho_2)$ is a representation of $\g_2$;
\item[\rm (iii)] For all $x,y,z\in \g_1$ and $u,v,w\in \g_2$, the following equalities hold:
\begin{eqnarray*}
~ \rho_2(u,v)\Courant{x,y,z}_{\g_1}&=&\Courant{x,y,\rho_2(u,v)z}_{\g_1}-\rho_2(D_1(x,y)u,v)z-\rho_2(u,D_1(x,y)v)z,\\
~\label{mtp8} \Courant{x,y,\rho_2(u,v)z}_{\g_1}&=&\rho_2(u,D_1(x,y)v)z+\rho_2(\rho_1(z,y)u,v)x-\rho_2(\rho_1(z,x)u,v)y,\\
~\label{mtp10} \Courant{\rho_2(u,v)x,y,z}_{\g_1}&=&\rho_2(u,\rho_1(y,z)v)x+D_2(v,\rho_1(x,y)u)z-\rho_2(v,\rho_1(x,z)u)y,\\
~ \rho_1(x,y)\Courant{u,v,w}_{\g_2}&=&\Courant{u,v,\rho_1(x,y)w}_{\g_2}-\rho_1(D_2(u,v)x,y)w-\rho_1(x,D_2(u,v)y)w\\
~\label{mtp16}  \Courant{u,v,\rho_1(x,y)w}_{\g_2}&=&\rho_1(x,D_2(u,v)y)w+\rho_1(\rho_2(w,v)x,y)u-\rho_1(\rho_2(w,u)x,y)v,\\
~\label{mtp18} \Courant{\rho_1(x,y)u,v,w}_{\g_2}&=&\rho_1(x,\rho_2(v,w)y)u+D_1(y,\rho_2(u,v)x)y-\rho_1(y,\rho_2(u,w)x)v,
\end{eqnarray*}
\end{itemize}
where $D_1(x,y)=D_{\rho_1}(x,y)=\rho_1(y,x)-\rho_1(x,y)$ and $D_2(u,v)=D_{\rho_2}(u,v)=\rho_2(v,u)-\rho_2(u,v)$. We denote a matched pair of Lie triple systems by a quadruple $(\g_1,\g_2;\rho_1,\rho_2)$.
\end{defi}

The following proposition demonstrates that a matched pair of Lie triple systems gives rise to a Lie triple systems structure on its direct sum.

\begin{pro}\label{pro:MT}
Let $(\g_1,\g_2;\rho_1,\rho_2)$ be a matched pair of Lie triple systems, where $\rho_1:\ot^2\g_1\longrightarrow\gl(\g_2)$ and $\rho_2:\ot^2\g_2\longrightarrow\gl(\g_1)$. Define an operation $\Courant{\cdot,\cdot,\cdot}_{\bowtie}$ on $\g_1\oplus\g_2$ to be
\begin{eqnarray}
\Courant{x+u,y+v,z+w}_{\bowtie}&=&\Courant{x,y,z}_{\g_1}+D_2(u,v)z+\rho_2(v,w)x-\rho_2(u,w)y\label{double}\\
~ \nonumber&&+\Courant{u,v,w}_{\g_2}+D_1(x,y)w+\rho_1(y,z)u-\rho_1(x,z)v.
\end{eqnarray}
Then $(\g_1\oplus\g_2,\Courant{\cdot,\cdot,\cdot}_{\bowtie})$ is a Lie triple system, which is called the {\bf double} of $\g_1$ and $\g_2$.
\end{pro}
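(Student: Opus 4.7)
The plan is to verify that the bracket $\Courant{\cdot,\cdot,\cdot}_{\bowtie}$ defined by formula \eqref{double} satisfies the two defining axioms of a Lie triple system, namely skew-symmetry together with the cyclic identity \eqref{LY1} and the fundamental identity \eqref{fundamental}. Rather than computing blindly, I would organize the verification via the bidegree decomposition of Section 3, which localizes each axiom to a manageable family of cases.

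First I would identify $\Courant{\cdot,\cdot,\cdot}_{\bowtie}$ with a homogeneous sum $\hat\mu_1+\hat\mu_2$, where $\hat\mu_1\in C^1_{\mathsf{LTS}}(\huaG,\huaG)$ has bidegree $2|0$ and encodes the $\g_1$-bracket together with the $\rho_1$-action of $\g_1$ on $\g_2$, while $\hat\mu_2$ has bidegree $0|2$ and encodes the $\g_2$-bracket with the $\rho_2$-action. Parsing \eqref{double} slot by slot using the template \eqref{equi}, one sees that the mixed term $\hat\psi$ vanishes for this bracket. A direct comparison with Proposition \ref{semidirect} then shows that condition (i) of the matched pair is equivalent to $\hat\mu_1$ being a Lie triple system structure on $\g_1\ltimes_{\rho_1}\g_2$, that is, to $[\hat\mu_1,\hat\mu_1]_{\mathsf{LTS}}=0$. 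Symmetrically, condition (ii) is $[\hat\mu_2,\hat\mu_2]_{\mathsf{LTS}}=0$.

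The remaining content to verify is $[\hat\mu_1,\hat\mu_2]_{\mathsf{LTS}}=0$, which by Lemma \ref{lem:bidegree} lives in bidegree $2|2$ and hence splits into only two nontrivial homogeneous components: one on inputs of type (3 copies from $\g_1$, 2 copies from $\g_2$) taking values in $\g_1$, and one on inputs of type $(2,3)$ taking values in $\g_2$. Expanding these two components using the Nambu formula \eqref{control} and evaluating on triples of each admissible ordering of inputs, I expect to recover exactly the six mixed identities in condition (iii): the three identities for $\Courant{\cdot,\cdot,\cdot}_{\g_1}$ interacting with $\rho_2$ arise from the $\g_1$-valued component, and the three identities for $\Courant{\cdot,\cdot,\cdot}_{\g_2}$ interacting with $\rho_1$ arise from the $\g_2$-valued component. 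With all three Maurer-Cartan equations of Proposition \ref{pro:twilled} then established (and $\hat\psi=0$ reducing the other two trivially), that proposition yields directly that $\hat\mu_1+\hat\mu_2=\Courant{\cdot,\cdot,\cdot}_{\bowtie}$ is a Lie triple system structure on $\g_1\oplus\g_2$.

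The main obstacle is the combinatorial bookkeeping required to expand $[\hat\mu_1,\hat\mu_2]_{\mathsf{LTS}}$ according to \eqref{control} and then match up, term by term, the resulting expression against the six identities of condition (iii). The bidegree decomposition reduces this to two tractable calculations, but in each one must keep careful track of signs coming from the graded bracket and the shuffle permutations, and use the skew-symmetry and cyclic identities inherited from the restriction to $C^*_{\mathsf{LTS}}$. Once this correspondence is confirmed, no further computation is necessary, and the proposition follows.
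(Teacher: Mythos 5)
Your proposal is correct and follows essentially the same route as the paper: both identify $\Courant{\cdot,\cdot,\cdot}_{\bowtie}$ with the degree-one element $\hat\mu_1+\hat\mu_2$ (the paper writes it as $\hat\pi_1+\hat\rho_1+\hat\pi_2+\hat\rho_2$) and show that the matched-pair axioms are exactly the bidegree components of the Maurer--Cartan equation $[\hat\mu_1+\hat\mu_2,\hat\mu_1+\hat\mu_2]_{\mathsf{LTS}}=0$. Your repackaging of conditions (i) and (ii) via Proposition \ref{semidirect} and of the conclusion via Proposition \ref{pro:twilled} with $\hat\psi=0$ is only a cosmetic reorganization of the same argument.
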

\begin{proof}
Let the quadruple $(\g_1,\g_2;\rho_1,\rho_2)$ be a matched pair of Lie triple systems, then we have the following equations:
\begin{eqnarray*}
\left\{\begin{array}{rcl}
[\hat\pi_1,\hat\rho_1]_{\mathsf{LTS}}+\half [\hat\rho_1,\hat\rho_1]_{\mathsf{LTS}}&=&0,\\
~[\hat\pi_2,\hat\rho_2]_{\mathsf{LTS}}+\half [\hat\rho_2,\hat\rho_2]_{\mathsf{LTS}}&=&0,\\
~[\hat\pi_2,\hat\rho_1]_{\mathsf{LTS}}+[\hat\rho_2,\hat\pi_1+\hat\rho_1]_{\mathsf{LTS}}&=&0.
\end{array}\right.
\end{eqnarray*}
Here, $\hat\pi_1,~\hat\rho_1,~\hat\pi_2,~\hat\rho_2\in C^1_{\mathsf{LTS}}(\huaG,\huaG)$ are given by
\begin{eqnarray*}
\hat\pi_1((x,u),(y,v),(z,w))&=&\Courant{x,y,z}_{\g_1},\\
\hat\rho_1((x,u),(y,v),(z,w))&=&D_1(x,y)w+\rho_1(y,z)u-\rho_1(x,z)v,\\
~\hat\pi_2((x,u),(y,v),(z,w))&=&\Courant{u,v,w}_{\g_2},\\
 \hat\rho_2((x,u),(y,v),(z,w))&=&D_2(u,v)z+\rho_2(v,w)x-\rho_2(u,w)y,\quad \forall x,y,z\in \g_1,~u,v,w\in \g_2,
\end{eqnarray*}
respectively.
It is straightforward to see that the equation
$$[\hat\pi_1,\hat\rho_1]_{\mathsf{LTS}}+\half [\hat\rho_1,\hat\rho_1]_{\mathsf{LTS}}=0 ~~(\text{resp.}~~[\hat\pi_2,\hat\rho_2]_{\mathsf{LTS}}+\half [\hat\rho_2,\hat\rho_2]_{\mathsf{LTS}}=0)$$
is equivalent to $(\g_2;\rho_1)$ (resp. $(\g_1;\rho_2)$) is a representation of $\g_1$ (resp. $\g_2$), and that the equation $$[\hat\pi_1,\hat\rho_2]_{\mathsf{LTS}}+[\hat\rho_1,\hat\pi_2+\hat\rho_2]_{\mathsf{LTS}}=0$$
is equivalent to Condition (iii).
Since $\g_1$ and $\g_2$ are Lie triple systems, i.e.,
$$[\pi_1,\pi_1]_{\mathsf{LTS}}=[\pi_2,\pi_2]_{\mathsf{LTS}}=0,$$
then we have
$$[\hat\pi_1+\hat\rho_1+\hat\pi_2+\hat\rho_2,\hat\pi_1+\hat\rho_1+\hat\pi_2+\hat\rho_2]_{\mathsf{LTS}}=0,$$
which implies that $(\g_1\oplus\g_2,\Courant{\cdot,\cdot,\cdot}_{\bowtie})$ is a Lie triple system. This finishes the proof.
\end{proof}

To demonstrate the relationship between matched pairs and twilled  Lie triple systems, we introduce the notion of strict twilled  Lie tripe systems as follows.
\begin{defi}
Let $(\huaG,\g_1,\g_2)$ be a twilled Lie triple system equipped with the structure $\Theta=\hat\mu_1+\hat\psi+\hat\mu_2$. Then $(\huaG,\g_1,\g_2)$ is called {\bf strict} if $\psi=0$, or equivalently, the following conditions are satisfied:
\begin{eqnarray*}
\left\{\begin{array}{rcl}
~\half[\hat\mu_1,\hat\mu_1]_{\mathsf{LTS}}&=&0,\\
~[\hat\mu_1,\hat\mu_2]_{\mathsf{LTS}}&=&0,\\
~\half[\hat\mu_2,\hat\mu_2]_{\mathsf{LTS}}&=&0.
\end{array}\right.
\end{eqnarray*}
\end{defi}

\begin{thm}\label{strict}
There is a one-to-one correspondence between matched pairs and strict twilled Lie triple systems.
\end{thm}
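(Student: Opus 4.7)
The plan is to read off the matched pair data directly from $\hat\mu_1$ and $\hat\mu_2$, using bidegrees to package the three structural equations of Definition 3.12 (after setting $\psi=0$) into the three ingredients of a matched pair: the two component Lie triple systems, the two representations, and the six mixed compatibility identities.

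First, starting from a strict twilled Lie triple system $(\huaG,\g_1,\g_2)$ with structure $\Theta=\hat\mu_1+\hat\mu_2$, I would define
\[
\Courant{x,y,z}_{\g_1}:=\hat\mu_1(x,y,z),\qquad \Courant{u,v,w}_{\g_2}:=\hat\mu_2(u,v,w),
\]
\[
\rho_1(x,y)u:=\hat\mu_1(u,x,y),\qquad \rho_2(u,v)x:=\hat\mu_2(x,u,v),
\]
for $x,y,z\in\g_1$ and $u,v,w\in\g_2$. Using the explicit decomposition \eqref{equi}, the remaining values of $\hat\mu_1$ and $\hat\mu_2$ on mixed inputs are determined by $D_{\rho_i}=\rho_i\circ\tau-\rho_i$ and by the semidirect-product formulas. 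This is precisely the observation already recorded in the remark after Proposition \ref{pro:twilled}.

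Next, I would expand the three bracket identities of strictness by bidegree. Since $||[\hat\mu_1,\hat\mu_1]_{\mathsf{LTS}}||=4|0$, the equation $\tfrac12[\hat\mu_1,\hat\mu_1]_{\mathsf{LTS}}=0$ decomposes by projection onto each $\g^{l,k}$-summand of $\ot^5(\g_1\oplus\g_2)$; the $\g^{5,0}$-component gives the fundamental identity for $\Courant{\cdot,\cdot,\cdot}_{\g_1}$, while the $\g^{4,1}$-components give exactly the representation axioms \eqref{RYT4}--\eqref{RLY5} for $(\g_2;\rho_1)$. Symmetrically, $\tfrac12[\hat\mu_2,\hat\mu_2]_{\mathsf{LTS}}=0$ encodes the Lie triple system axioms for $\g_2$ and the representation axioms for $(\g_1;\rho_2)$. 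Finally, $[\hat\mu_1,\hat\mu_2]_{\mathsf{LTS}}=0$ has bidegree $3|1$, and I would expand this, using the Nambu circle product \eqref{control}, on each $\g^{l,k}$ with $l+k=5$. The six nonzero components (three with values in $\g_1$ and three with values in $\g_2$) translate term-by-term into the six compatibility relations listed in (iii) of the definition of matched pair. Conversely, if $(\g_1,\g_2;\rho_1,\rho_2)$ is a matched pair, Proposition \ref{pro:MT} provides a Lie triple system structure on $\g_1\oplus\g_2$ whose bracket splits according to \eqref{double}; examining each summand and comparing with \eqref{equi}, the terms involving strictly mixed triples of type $(2,1)$ or $(1,2)$ land in the ``expected'' factor ($\g_2$ and $\g_1$ respectively), which forces $\hat\phi_1=\hat\phi_2=\hat\psi=0$ in the bidegree decomposition. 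Thus the double is a strict twilled Lie triple system.

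Finally, I would verify that these two assignments are mutually inverse: starting with a matched pair, forming the double, then reading off $\hat\mu_1|_{\g_1\otimes\g_1}$, $\hat\mu_2|_{\g_2\otimes\g_2}$ and the two representations returns the original data by construction of \eqref{double}; starting with a strict twilled Lie triple system and forming the double of the associated matched pair returns $\Theta$ because both agree on $\g^{3,0}$, $\g^{2,1}$, $\g^{1,2}$ and $\g^{0,3}$ (the only nonzero summands when $\phi_1=\phi_2=\psi=0$).

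The main obstacle I anticipate is the bookkeeping in the expansion of $[\hat\mu_1,\hat\mu_2]_{\mathsf{LTS}}=0$: the Nambu composition \eqref{control} produces a large number of shuffle terms, and one has to carefully sort them by bidegree and sign to recover each of the six matched-pair identities in (iii) in the exact form stated. Once that dictionary is written down, the two correspondences are clearly mutually inverse by inspection.
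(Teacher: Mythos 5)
Your proposal is correct and follows essentially the same route as the paper: both directions run through the Maurer--Cartan characterization in the controlling graded Lie algebra, the bidegree decomposition of $\tfrac12[\hat\mu_1,\hat\mu_1]_{\mathsf{LTS}}$, $[\hat\mu_1,\hat\mu_2]_{\mathsf{LTS}}$ and $\tfrac12[\hat\mu_2,\hat\mu_2]_{\mathsf{LTS}}$ into the representation and compatibility axioms, and Proposition \ref{pro:MT} for the passage from a matched pair to the double. You spell out the converse direction and the mutual-inverse check more explicitly than the paper does, but the underlying argument is the same.
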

\begin{proof}
Let the quadruple $(\g_1,\g_2;\rho_1,\rho_2)$ be a matched pair of Lie triple systems, then by Proposition \ref{pro:MT}, the double operation $\Courant{\cdot,\cdot,\cdot}_{\bowtie}$ is a Lie triple system structure on the direct sum of vector space $\g_1\oplus\g_2$. The letter is equivalent to that $\hat\mu_1+\hat\mu_2\in C^1_{\mathsf{LTS}}(\huaG,\huaG)$ is a Maurer-Cartan element of the graded Lie algebra $(C^*_{\mathsf{LTS}}(\huaG,\huaG),[\cdot,\cdot]_{\mathsf{LTS}})$, i.e.,
$$[\hat\mu_1+\hat\mu_2,\hat\mu_1+\hat\mu_2]_{\mathsf{LTS}}=0,$$
where $\hat\mu_1$ and $\hat\mu_2$ are given by
\begin{eqnarray*}
\hat\mu_1(x+u,y+v,z+w)&=&\Big(\Courant{x,y,z}_{\g_1},D_1(x,y)w+\rho_1(y,z)u-\rho_1(x,z)v\Big),\\
\hat\mu_2(x+u,y+v,z+w)&=&\Big(D_2(u,v)z+\rho_2(v,w)x-\rho_2(u,w)y,\Courant{u,v,w}_{\g_2}\Big),
\end{eqnarray*}
for all $x,y,z\in \g_1,~u,v,w\in \g_2$. By the convention in \eqref{equi}, define
$$\rho_1(x,y)u:=\Courant{u,x,y}_2, \quad \rho_2(u,v)x:=\Courant{x,u,v}_1.$$
Consequently we obtain
$$D_1(x,y)u=\rho_1(y,x)u-\rho_1(x,y)u=\Courant{x,y,u}_2,$$
and similarly
$$D_2(u,v)x=\rho_2(v,u)x-\rho_2(u,v)x=\Courant{u,v,x}_1.$$
Then the structure $\Theta=\hat\mu_1+\hat\mu_2$, which demonstrates that $\huaG=\g_1\oplus\g_2$ is a strict twilled Lie triple system. This finishes the proof.
\end{proof}

It is natural to ask what object a usual twilled Lie triple system corresponds to. To answer this question, we shall introduce the notion of generalized matched pairs of Lie triple systems. Before this, we give the definition of generalized representation of Lie triple systems first.

For a linear map $\tau:\g\longrightarrow\Hom(\ot^2V,V)$, there induces a linear map $\huaD_{\tau}:\g\longrightarrow\Hom(\wedge^2V,V)$ defined to be
\begin{eqnarray}
\huaD_\tau(x)(u,v)=\tau(x)(v,u)-\tau(x)(u,v),\quad \forall x\in \g,~u,v\in V.\label{huaD}
\end{eqnarray}
We denote $\huaD_\tau$ by $\huaD$ without ambiguities.
\begin{defi}
Let $(\g,\Courant{\cdot,\cdot,\cdot})$ be a Lie triple system and $V$ a vector space. A {\bf generalized representation} of $\g$ on $V$ consists of linear maps $\rho:\ot^2\g\longrightarrow\gl(V)$ and $\tau:\g\longrightarrow\Hom(\ot^2V,V)$, such that $\hat\pi+\hat\rho+\hat\tau\in C^1_{\mathsf{LTS}}(\g\oplus V,\g\oplus V)$ is a Maurer-Cartan element of $(C^*_{\mathsf{LTS}}(\g\oplus V,\g\oplus V),[\cdot,\cdot]_{\mathsf{LTS}})$, i.e.,
$$[\hat\pi+\hat\rho+\hat\tau,\hat\pi+\hat\rho+\hat\tau]_{\mathsf{LTS}}=0.$$
Here $\hat\pi,~\hat\rho,~\hat\tau$ are given by
\begin{eqnarray*}
\hat\pi(x+u,y+v,z+w)&=&\Courant{x,y,z},\\
 \hat\rho(x+u,y+v,z+w)&=&D(x,y)w+\rho(y,z)u-\rho(x,z)v, \\
\hat\tau(x+u,y+v,z+w)&=&\huaD(z)(u,v)+\tau(x)(v,w)-\tau(y)(u,w),\quad \forall x,y,z\in \g,~~u,v,w\in V,
\end{eqnarray*}
respectively, where $\huaD$ is defined by \eqref{huaD}. We denote a generalized representation of a Lie triple system $\g$ by a pair $(V;(\rho,\tau))$.
\end{defi}

\begin{ex}
Let $(\g,\Courant{\cdot,\cdot,\cdot})$ be a Lie triple system. Define
$\huaR:\g\longrightarrow\Hom(\ot^2\g,\g)$
to be
$$\huaR(x)(y,z):=\Courant{x,y,z}, \forall x,y,z\in \g.$$
Then $(\g;(\frkR,\huaR))$ is a generalized representation of $\g$ on itself, where $(\g;\frkR)$ is the adjoint representation of $\g$. Obviously, $\huaD_\huaR=\huaL:\g\longrightarrow\Hom(\wedge^2\g,\g)$ is given by
$$\huaL(x)(y,z)=\Courant{y,z,x},\quad \forall x,y,z\in \g.$$
\end{ex}

Generalized representation can be characterized by the notion of generalized semidirect product Lie triple systems.
\begin{pro}
Let $(\g,\Courant{\cdot,\cdot,\cdot})$ be a Lie triple system, $V$ a vector space, and $\rho:\ot^2\g\longrightarrow\gl(V)$ and $\tau:\g\longrightarrow\Hom(\ot^2V,V)$ be linear maps. Then $(V;(\rho,\tau))$ is a generalized representation of $\g$ if and only if there exists a Lie triple system structure $\Courant{\cdot,\cdot,\cdot}_{(\rho,\tau)}$ on the direct sum $\g\oplus V$ defined to be for all $x,y,z\in \g$ and $u,v,w\in V$,
\begin{eqnarray*}
\Courant{x+u,y+v,z+w}_{(\rho,\tau)}&=&\Courant{x,y,z}+D(x,y)w+\rho(y,z)u-\rho(x,z)v\\
~ &&+\huaD(z)(u,v)+\tau(x)(v,w)-\tau(y)(u,w).
\end{eqnarray*}
The Lie triple system $(\g,\Courant{\cdot,\cdot,\cdot}_{(\rho,\tau)})$ is called the {\bf generalized product Lie triple system}.
\end{pro}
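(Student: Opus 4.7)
The plan is to realize the claimed operation $\Courant{\cdot,\cdot,\cdot}_{(\rho,\tau)}$ on $\g\oplus V$ as the single element
\[
\Theta := \hat\pi+\hat\rho+\hat\tau \in C^1_{\mathsf{LTS}}(\g\oplus V,\g\oplus V)
\]
from the definition of a generalized representation, and then invoke the fundamental correspondence recalled after equation \eqref{control}, namely that Maurer--Cartan elements of $(C^*_{\mathsf{LTS}}(\g\oplus V,\g\oplus V),[\cdot,\cdot]_{\mathsf{LTS}})$ are precisely the Lie triple system structures on $\g\oplus V$. Once these two reductions are in place, the equivalence in the proposition becomes a direct comparison of definitions.

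First I would simply expand $\hat\pi(X,Y,Z)+\hat\rho(X,Y,Z)+\hat\tau(X,Y,Z)$ for $X=x+u,~Y=y+v,~Z=z+w$ using the formulas listed in the definition of generalized representation; this gives back exactly $\Courant{X,Y,Z}_{(\rho,\tau)}$, so the proposed trilinear bracket on $\g\oplus V$ agrees with the action of $\Theta$. Next I would verify that $\Theta$ actually belongs to the subspace $C^1_{\mathsf{LTS}}$: this requires skew-symmetry in the first two arguments and the cyclic identity \eqref{LY1}. Skew-symmetry is clear term by term using that $D(x,y)=\rho(y,x)-\rho(x,y)$ and $\huaD(z)(u,v)=\tau(z)(v,u)-\tau(z)(u,v)$ are themselves skew. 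The cyclic identity needs a short bookkeeping argument: the $\Courant{\cdot,\cdot,\cdot}$-piece contributes zero by \eqref{LY1} for $\g$, and the remaining pieces cancel when one collects coefficients of $u$, $v$, $w$ (for the $\hat\rho$-part) and of $\tau(x)$, $\tau(y)$, $\tau(z)$ (for the $\hat\tau$-part), using only the definitions of $D$ and $\huaD$. So $\Theta\in C^1_{\mathsf{LTS}}(\g\oplus V,\g\oplus V)$.

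At this point the proof closes without further calculation. The definition of a generalized representation is exactly
\[
[\hat\pi+\hat\rho+\hat\tau,\hat\pi+\hat\rho+\hat\tau]_{\mathsf{LTS}}=0,
\]
i.e.\ that $\Theta$ is a Maurer--Cartan element of $(C^*_{\mathsf{LTS}}(\g\oplus V,\g\oplus V),[\cdot,\cdot]_{\mathsf{LTS}})$. By the Nijenhuis--Richardson style characterization stated in the paper, this is equivalent to $\Theta$ being a Lie triple system structure on $\g\oplus V$, which is exactly the trilinear bracket $\Courant{\cdot,\cdot,\cdot}_{(\rho,\tau)}$. This gives both directions of the equivalence simultaneously.

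The only mild obstacle I anticipate is the cyclic-identity check in the second step, since it must be handled for the mixed terms involving both $\rho$ and $\tau$; but this is a finite and mechanical cancellation, not a conceptual issue, and no deeper computation (in particular no expansion of the full Nambu bracket $[\Theta,\Theta]_{\mathsf{LTS}}$) is required, because the Maurer--Cartan reformulation of Lie triple system structures has already been established earlier in the paper.
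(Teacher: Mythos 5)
Your proposal is correct and follows essentially the same route as the paper: the paper's proof likewise identifies $\Courant{\cdot,\cdot,\cdot}_{(\rho,\tau)}$ with $(\hat\pi+\hat\rho+\hat\tau)(\cdot,\cdot,\cdot)$ and reduces the equivalence to the Maurer--Cartan characterization of Lie triple system structures, leaving the rest as a direct computation. Your write-up just makes explicit the membership check $\hat\pi+\hat\rho+\hat\tau\in C^1_{\mathsf{LTS}}(\g\oplus V,\g\oplus V)$ that the paper leaves implicit.
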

\begin{proof}
It follows from
$$\Courant{x+u,y+v,z+w}_{(\rho,\tau)}=(\hat\pi+\hat\rho+\hat\tau)(x+u,y+v,z+w)$$
and involves a direct computation.
\end{proof}

\begin{rmk}
The notion of generalized representation of Lie triple systems given, it is natural to establish a new cohomology for Lie triple systems and then  deformations and extensions can be explored consequently. We will examine this problem in the future and we are also looking forward to new studies in this direction.
\end{rmk}

Next, we introduce the notion of generalized matched pairs of Lie triple systems in the sequel.

\begin{defi}
A {\bf generalized matched pair} of Lie triple systems consists of two Lie triple systems $(\g_1,\Courant{\cdot,\cdot,\cdot}_{\g_1})$ and $(\g_2,\Courant{\cdot,\cdot,\cdot}_{\g_2})$, and two pairs of linear maps $\rho_1:\ot^2\g_1\longrightarrow\gl(\g_2),~\tau_1:\g_1\longrightarrow\Hom(\ot^2\g_2,\g_2)$ and $\rho_2:\ot^2\g_2\longrightarrow\gl(\g_1),~\tau_2:\g_2\longrightarrow\Hom(\ot^2\g_1,\g_1)$, such that the following equalities hold:
\begin{eqnarray}
[\hat\pi_1,\hat\rho_1+\hat\tau_1]_{\mathsf{LTS}}+\half [\hat\rho_1+\hat\tau_1,\hat\rho_1+\hat\tau_1]_{\mathsf{LTS}}&=&0,\label{rep1}\\
~[\hat\pi_2,\hat\rho_2+\hat\tau_2]_{\mathsf{LTS}}+\half [\hat\rho_2+\hat\tau_2,\hat\rho_2+\hat\tau_2]_{\mathsf{LTS}}&=&0,\label{rep2}\\
~[\hat\pi_1,\hat\rho_2+\hat\tau_2]_{\mathsf{LTS}}+[\hat\rho_1+\hat\tau_1,\hat\pi_2]_{\mathsf{LTS}}+[\hat\rho_1+\hat\tau_1,\hat\rho_2+\hat\tau_2]_{\mathsf{LTS}}&=&0,\label{matp}
\end{eqnarray}
where
\begin{eqnarray*}
\hat\pi_1((x,u),(y,v),(z,w))&=&\Courant{x,y,z}_{\g_1},\quad  \hat\rho_1((x,u),(y,v),(z,w))=D_1(x,y)w+\rho_1(y,z)u-\rho_1(x,z)v,\\
\hat\tau_1((x,u),(y,v),(z,w))&=&\huaD_1(z)(u,v)+\tau_1(x)(v,w)-\tau_1(y)(u,w),\\
\hat\pi_2((x,u),(y,v),(z,w))&=&\Courant{u,v,w}_{\g_2},\quad \hat\rho_2((x,u),(y,v),(z,w))=D_2(u,v)z+\rho_2(v,w)x-\rho_2(u,w)y,\\
\hat\tau_2((x,u),(y,v),(z,w))&=&\huaD_2(w)(x,y)+\tau_2(u)(y,z)-\tau(v)(x,z),\quad \forall x,y,z\in \g_1,~u,v,w\in \g_2.
\end{eqnarray*}
We denote a generalized matched pair of Lie triple systems by a quadruple $\Big(\g_1,\g_2;(\rho_1,\tau_1),(\rho_2,\tau_2)\Big)$ .
\end{defi}

If the quadruple $\Big(\g_1,\g_2;(\rho_1,\tau_1),(\rho_2,\tau_2)\Big)$ is a generalized matched pair of Lie triple systems, then it is straightforward to see that Eq. \eqref{rep1} (resp. Eq. \eqref{rep2}) means that $(\g_2;(\rho_1,\tau_1))$ (resp. $(\g_1;(\rho_2,\tau_2))$) is a generalized representation of $\g_2$ (resp. $\g_1$), and Eq. \eqref{matp} means certain compatibility conditions.

\begin{pro}
Suppose that the quadruple $\Big(\g_1,\g_2;(\rho_1,\tau_1),(\rho_2,\tau_2)\Big)$ is a generalized matched pair of Lie triple systems. For all $x,y,z\in \g_1$ and $u,v,w\in \g_2$, define a new operation $\Courant{\cdot,\cdot,\cdot}_{\g_1\oplus\g_2}$ on the direct sum $\g_1\oplus\g_2$ to be
\begin{eqnarray}
~ \nonumber&&\Courant{x+u,y+v,z+w}_{\g_1\oplus\g_2}\\
~ \nonumber&=&\Courant{x,y,z}_{\g_1}+D_2(u,v)z+\rho_2(v,w)x-\rho_2(u,w)y+\huaD_2(w)(x,y)+\tau_2(u)(y,z)-\tau_2(v)(x,z)\\
~ &&+\Courant{u,v,w}_{\g_2}+D_1(x,y)w+\rho_1(y,z)u-\rho_1(x,z)v+\huaD_1(z)(u,v)+\tau_1(x)(v,w)-\tau_1(y)(u,w),\label{gdouble}
\end{eqnarray}
then $(\g_1\oplus\g_2,\Courant{\cdot,\cdot,\cdot}_{\g_1\oplus\g_2})$ is a Lie triple system.
\end{pro}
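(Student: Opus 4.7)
The plan is to imitate the proof of Proposition \ref{pro:MT}, but with the enriched decomposition that incorporates the additional linear maps $\tau_1$ and $\tau_2$. The new bracket \eqref{gdouble} is precisely the one corresponding to the $1$-cochain
$$\Theta := \hat\pi_1 + \hat\rho_1 + \hat\tau_1 + \hat\pi_2 + \hat\rho_2 + \hat\tau_2 \in C^1_{\mathsf{LTS}}(\huaG,\huaG),$$
so the statement is equivalent to the Maurer-Cartan equation $[\Theta,\Theta]_{\mathsf{LTS}} = 0$. The goal is therefore to expand this bracket and collect terms so that each grouping vanishes by one of our hypotheses.

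First I would group the six summands as $A := \hat\pi_1 + \hat\rho_1 + \hat\tau_1$ and $B := \hat\pi_2 + \hat\rho_2 + \hat\tau_2$, so that
$$[\Theta,\Theta]_{\mathsf{LTS}} = [A,A]_{\mathsf{LTS}} + 2[A,B]_{\mathsf{LTS}} + [B,B]_{\mathsf{LTS}}.$$
Expanding $[A,A]_{\mathsf{LTS}}$ yields $[\hat\pi_1,\hat\pi_1]_{\mathsf{LTS}} + 2[\hat\pi_1,\hat\rho_1+\hat\tau_1]_{\mathsf{LTS}} + [\hat\rho_1+\hat\tau_1,\hat\rho_1+\hat\tau_1]_{\mathsf{LTS}}$; the first term vanishes since $(\g_1,\Courant{\cdot,\cdot,\cdot}_{\g_1})$ is a Lie triple system, and the remaining two cancel by hypothesis \eqref{rep1}. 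An identical argument using that $(\g_2,\Courant{\cdot,\cdot,\cdot}_{\g_2})$ is a Lie triple system and hypothesis \eqref{rep2} shows $[B,B]_{\mathsf{LTS}} = 0$.

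For the cross term I would expand
$$[A,B]_{\mathsf{LTS}} = [\hat\pi_1,\hat\pi_2]_{\mathsf{LTS}} + [\hat\pi_1,\hat\rho_2+\hat\tau_2]_{\mathsf{LTS}} + [\hat\rho_1+\hat\tau_1,\hat\pi_2]_{\mathsf{LTS}} + [\hat\rho_1+\hat\tau_1,\hat\rho_2+\hat\tau_2]_{\mathsf{LTS}}.$$
The last three terms sum to zero by the compatibility hypothesis \eqref{matp}, so it only remains to verify $[\hat\pi_1,\hat\pi_2]_{\mathsf{LTS}} = 0$. This is the one step that is not handed to us directly, and although it is short, it is the main potential obstacle. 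It follows from a bidegree/support argument: $\hat\pi_1$ is zero unless all three inputs lie in $\g_1$, in which case it outputs in $\g_1$, whereas $\hat\pi_2$ outputs in $\g_2$ and is zero unless all inputs lie in $\g_2$. Hence $\hat\pi_1\circ \hat\pi_2 = 0 = \hat\pi_2\circ \hat\pi_1$ from the definition of $\circ$ in \eqref{control}, so the graded commutator vanishes.

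Combining these observations gives $[\Theta,\Theta]_{\mathsf{LTS}} = 0$, which is equivalent to the fundamental identity and the skew-symmetry axioms for $\Courant{\cdot,\cdot,\cdot}_{\g_1\oplus\g_2}$ (the skew-symmetry being automatic from the form of \eqref{gdouble} and the skew-symmetry of each summand in its first two slots, together with the definition of $\huaD_i$). Thus $(\g_1\oplus\g_2,\Courant{\cdot,\cdot,\cdot}_{\g_1\oplus\g_2})$ is a Lie triple system, as desired.
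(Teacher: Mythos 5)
Your proof is correct and follows essentially the same route as the paper: the paper's own (one-line) proof simply identifies $\Courant{\cdot,\cdot,\cdot}_{\g_1\oplus\g_2}$ with the cochain $\hat\pi_1+\hat\rho_1+\hat\tau_1+\hat\pi_2+\hat\rho_2+\hat\tau_2$ and invokes the Maurer--Cartan equation, which is exactly what you verify by expanding $[\Theta,\Theta]_{\mathsf{LTS}}$ against \eqref{rep1}, \eqref{rep2}, \eqref{matp}. Your explicit treatment of $[\hat\pi_1,\hat\pi_2]_{\mathsf{LTS}}=0$ via the support argument is a detail the paper leaves implicit, and it is handled correctly.
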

\begin{proof}
It follows from
$$\Courant{x+u,y+v,z+w}_{\g_1\oplus\g_2}=[\hat\pi_1+\hat\rho_1+\hat\tau_1+\hat\pi_2+\hat\rho_2+\hat\tau_2,\hat\pi_1+\hat\rho_1+\hat\tau_1+\hat\pi_2+\hat\rho_2+\hat\tau_2]_{\mathsf{LYS}}(x+u,y+v,z+w).$$
This completes the proof.
\end{proof}

\vspace{3mm}
Now, we are ready to give our another key conclusion in this section.
\begin{thm}\label{usual}
There is a one-to-one correspondence between generalized matched pairs and twilled Lie triple systems.
\end{thm}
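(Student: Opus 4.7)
The plan is to refine the bidegree decomposition of a twilled Lie triple system structure so as to extract the data of a generalized matched pair, and then to translate the Maurer-Cartan system of Proposition \ref{pro:twilled} into the defining equations \eqref{rep1}, \eqref{rep2}, \eqref{matp}. Given a twilled Lie triple system $((\huaG,\Theta),\g_1,\g_2)$ with $\Theta=\hat\mu_1+\hat\psi+\hat\mu_2$, I first split each summand by input/output type. The bidegree-$2|0$ piece $\hat\mu_1$ decomposes uniquely as $\hat\pi_1+\hat\rho_1$, where $\hat\pi_1$ is supported on $\g^{3,0}$ and so determines a trilinear bracket $\pi_1$ on $\g_1$, while $\hat\rho_1$ is supported on $\g^{2,1}$ and encodes a linear map $\rho_1\colon\ot^2\g_1\to\gl(\g_2)$ through the formulas \eqref{equi}. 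Symmetrically $\hat\mu_2=\hat\pi_2+\hat\rho_2$. The bidegree-$1|1$ piece $\hat\psi$ splits as $\hat\tau_1+\hat\tau_2$, where $\hat\tau_1$ (supported on $\g^{1,2}$, $\g_2$-valued) encodes $\tau_1\colon\g_1\to\Hom(\ot^2\g_2,\g_2)$, and $\hat\tau_2$ (supported on $\g^{2,1}$, $\g_1$-valued) encodes $\tau_2\colon\g_2\to\Hom(\ot^2\g_1,\g_1)$. This produces the candidate datum $\bigl(\g_1,\g_2;(\rho_1,\tau_1),(\rho_2,\tau_2)\bigr)$.

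Next I would match the five equations T1--T5 of Proposition \ref{pro:twilled} with the system \eqref{rep1}, \eqref{rep2}, \eqref{matp}. By Lemmas \ref{lem:bidegree} and \ref{lemma1}, the T-equations are precisely the vanishing of the bidegree components of $[\Theta,\Theta]_{\mathsf{LTS}}$, and within each bidegree one may further refine by whether the output lies in $\g_1$ or $\g_2$. The bidegree-$4|0$ equation T1 then splits into $[\hat\pi_1,\hat\pi_1]_{\mathsf{LTS}}=0$ (so $\g_1$ is a Lie triple system) together with the $4|0$-component $[\hat\pi_1,\hat\rho_1]_{\mathsf{LTS}}+\tfrac12[\hat\rho_1,\hat\rho_1]_{\mathsf{LTS}}=0$ of \eqref{rep1}; dually T5 yields the Lie triple axioms on $\g_2$ and the $0|4$-component of \eqref{rep2}. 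T2 splits into the $3|1$-components of \eqref{rep1} and \eqref{matp}; T4 into the $1|3$-components of \eqref{rep2} and \eqref{matp}; and T3 finally delivers the $2|2$-components of all three, namely $\tfrac12[\hat\tau_1,\hat\tau_1]_{\mathsf{LTS}}=0$, $\tfrac12[\hat\tau_2,\hat\tau_2]_{\mathsf{LTS}}=0$, and the residual mixed identity of \eqref{matp}.

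The converse direction is then immediate: starting from a generalized matched pair set $\Theta:=\hat\pi_1+\hat\rho_1+\hat\tau_1+\hat\tau_2+\hat\rho_2+\hat\pi_2$; because $\hat\pi_1$ and $\hat\pi_2$ vanish away from $\g^{3,0}$ and $\g^{0,3}$ respectively, the proto-twilled components $\phi_1,\phi_2$ of $\Theta$ are zero, so $\g_1$ and $\g_2$ are subalgebras of $\huaG$, and reassembling \eqref{rep1}, \eqref{rep2}, \eqref{matp} together with the Lie triple axioms on $\g_i$ recovers T1--T5, whence $(\huaG,\g_1,\g_2)$ is a twilled Lie triple system. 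The main obstacle is the bookkeeping: one must verify that within each fixed bidegree the contributions to $[\Theta,\Theta]_{\mathsf{LTS}}$ separate cleanly into pieces indexed by output type, with no spurious cross-terms, a finite but intricate check relying on precisely which brackets $[\hat f,\hat g]_{\mathsf{LTS}}$ are nontrivial on which $\g^{l,k}$; once this is settled, Lemma \ref{lemma1} yields the tight equivalence between the two equation systems.
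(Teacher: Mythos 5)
Your overall route is the same as the paper's: decompose the structure $\Theta=\hat\mu_1+\hat\psi+\hat\mu_2$ further into $\hat\pi_1+\hat\rho_1$, $\hat\tau_1+\hat\tau_2$, $\hat\pi_2+\hat\rho_2$ according to \eqref{equi}, and identify the Maurer--Cartan equation $[\Theta,\Theta]_{\mathsf{LTS}}=0$ with the generalized matched pair axioms. The direction ``generalized matched pair $\Rightarrow$ twilled'' is unproblematic in both treatments: summing \eqref{rep1}, \eqref{rep2}, \eqref{matp} together with $[\hat\pi_1,\hat\pi_1]_{\mathsf{LTS}}=[\hat\pi_2,\hat\pi_2]_{\mathsf{LTS}}=0$ and the identity $[\hat\pi_1,\hat\pi_2]_{\mathsf{LTS}}=0$ gives $\tfrac12[\Theta,\Theta]_{\mathsf{LTS}}=0$ with $\phi_1=\phi_2=0$.

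The weak point is in your converse. You claim that each equation of Proposition \ref{pro:twilled} splits, by output type, into the corresponding bidegree components of \eqref{rep1}, \eqref{rep2} and \eqref{matp} ``with no spurious cross-terms''; you correctly identify this as the crux, but the separation you propose does not hold. For instance, at bidegree $3|1$ the equation $[\hat\psi,\hat\mu_1]_{\mathsf{LTS}}=0$ contains both $[\hat\pi_1,\hat\tau_1]_{\mathsf{LTS}}+[\hat\rho_1,\hat\tau_1]_{\mathsf{LTS}}$ (the $3|1$-part of \eqref{rep1}, supported on $\g^{3,2}$ with values in $\g_2$) and $[\hat\pi_1,\hat\tau_2]_{\mathsf{LTS}}+[\hat\rho_1,\hat\tau_2]_{\mathsf{LTS}}$ (the $3|1$-part of \eqref{matp}); the composite $\hat\rho_1\circ\hat\tau_2$ occurring in the latter is also supported on $\g^{3,2}$ with values in $\g_2$, so sorting by support and output type only yields the vanishing of the sum of the two pieces, not of each separately. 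The same overlap occurs at bidegree $2|2$, where $\tfrac12[\hat\tau_1,\hat\tau_1]_{\mathsf{LTS}}$, $\tfrac12[\hat\tau_2,\hat\tau_2]_{\mathsf{LTS}}$ and the $2|2$-part of \eqref{matp} mix inside the sectors $\g^{3,2}\to\g_1$ and $\g^{2,3}\to\g_2$. Thus from the twilled condition alone your argument recovers the three generalized matched pair equations only up to their sum within each bidegree, and an additional argument (or a reformulation of the axioms) is needed to conclude that \eqref{rep1}, \eqref{rep2}, \eqref{matp} each hold individually. To be fair, the paper's own proof is silent on exactly this point: it carries out the forward direction and, for the converse, simply defines $\rho_i,\tau_i$ by the projections in \eqref{equi} without verifying the three equations separately. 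So your proposal matches the paper's argument in substance and is in places more explicit; the one step you defer as ``a finite but intricate check'' is precisely where both arguments are incomplete, and the mechanism you propose for that check (separation by output type) is not sufficient as stated.
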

\begin{proof}
The quadruple $(\g_1,\g_2;(\rho_1,\tau_1),(\rho_2,\tau_2))$ is a generalized matched pair of Lie triple systems, then the operation $\Courant{\cdot,\cdot,\cdot}_{\g_1\oplus\g_2}$ is a Lie triple system on the direct sum of vector space $\g_1\oplus\g_2$. The letter is equivalent to that $\hat\mu_1+\hat\psi+\hat\mu_2\in C^1_{\mathsf{LTS}}(\huaG,\huaG)$ is a Maurer-Cartan element of the graded Lie algebra $(C^*_{\mathsf{LTS}}(\huaG,\huaG),[\cdot,\cdot]_{\mathsf{LTS}})$, i.e.,
$$[\hat\mu_1+\hat\psi+\hat\mu_2,\hat\mu_1+\hat\psi+\hat\mu_2]_{\mathsf{LTS}}=0,$$
where $\hat\mu_1,\hat\psi,\hat\mu_2$ are given by
\begin{eqnarray*}
\hat\mu_1(x+u,y+v,z+w)&=&\Big(\Courant{x,y,z}_{\g_1},D_1(x,y)w+\rho_1(y,z)u-\rho_1(x,z)v\Big),\\
\hat\psi(x+u,y+v,z+w)&=&\Big(\huaD_2(w)(x,y)+\tau_2(u)(x,y)-\tau_2(v)(x,z),\huaD_1(z)(u,v)+\tau_1(x)(v,w)-\tau_1(y)(u,w)\Big),\\
\hat\mu_2(x+u,y+v,z+w)&=&\Big(D_2(u,v)z+\rho_2(v,w)x-\rho_2(u,w)y,\Courant{u,v,w}_{\g_2}\Big),
\end{eqnarray*}
for all $x,y,z\in \g_1,~u,v,w\in \g_2$. By the convention in \eqref{equi}, define
\begin{eqnarray*}
\rho_1(x,y)u:=\Courant{u,x,y}_2, \quad \rho_2(u,v)x:=\Courant{x,u,v}_1,\\
 \tau_1(x)(u,v):=\Courant{x,u,v}_2,\quad \tau_2(u)(x,y):=\Courant{u,x,y}_1.
 \end{eqnarray*}
Consequently we obtain
$$\huaD_1(x)(u,v)=\Courant{u,v,x}_2,\quad \huaD_2(u)(x,y)=\Courant{x,y,u}_1.$$
Then the structure $\Theta=\hat\mu_1+\hat\psi+\hat\mu_2$, which demonstrates that $\huaG=\g_1\oplus\g_2$ is a twilled Lie triple system. This finishes the proof.
\end{proof}

\begin{rmk}
If we constructed the dual representation of a generalized representation, then we could establish the generalized bialgebra of Lie triple systems and even, the generalized Lie-Yamaguti bialgebra theory. We will examine this projection in the future.
\end{rmk}

\subsection{Relative Rota-Baxter operators and twilled Lie triple systems}
In this subsection, we construct a twilled Lie triple system via a relative Rota-Baxter operator and give some examples to end up with this section.
\begin{defi}
Let $(\g,\Courant{\cdot,\cdot,\cdot})$ be a Lie triple system and $(V;\rho)$ a representation of $\g$. Then a linear map $T:V\longrightarrow\g$ is called a {\bf relative Rota-Baxter operator} on $\g$ with respect to $(V;\rho)$ if $T$ satisfies
\begin{eqnarray*}
\Courant{Tu,Tv,Tw}=T\Big(D(Tu,Tv)w+\rho(Tv,Tw)u-\rho(Tu,Tw)v\Big),\quad \forall u,v,w\in V.
\end{eqnarray*}
Besides, a relative Rota-Baxter operator $T:\g\longrightarrow\g$ with respect to the adjoint representation $(\g;\frkR)$ is called a {\bf Rota-Baxter operator} on $\g$, i.e., $T$ satisfies
$$\Courant{Tx,Ty,Tz}=T\Big(\Courant{Tx,Ty,z}+\Courant{x,Ty,Tz}+\Courant{Tx,y,Tz}\Big),\quad \forall x,y,z\in \g.$$
\end{defi}

In \cite{Makhlouf}, relative Rota-Baxter operators are realized as  Maurer-Cartan elements in a suitable $L_\infty$-algebra $(C^*(V,\g),l_3)$ (This $L_\infty$-algebra was called the Lie 3-algebra in the literature). When the (strict) twilled Lie triple system is just the semidirect product $((\g\ltimes V,\Theta=\hat\mu_1),\g,V)$, the $L_\infty$-algebra constructed in Theorem \ref{Linfty} is just the controlling algebra of relative Rota-Baxter operators. Thus based on Theorem \ref{MC}, we have the following result to relate relative Rota-Baxter operators with twilled Lie triple systems.
\begin{pro}\label{Otwisting}
Let $(\g,\Courant{\cdot,\cdot,\cdot})$ be a Lie triple system, $(V;\rho)$ a representation of $\g$, and $T:V\longrightarrow\g$ a linear map. Then $((\g\oplus V,\Theta^T),\g,V)$ is a twilled Lie triple system if and only if $T$ is a relative Rota-Baxter operator.
\end{pro}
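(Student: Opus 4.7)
The plan is to specialise the general twisting/Maurer--Cartan machinery established in Theorems~\ref{Linfty} and~\ref{MC} to the semidirect product situation and then unpack the resulting Maurer--Cartan equation by hand. Start with the observation that the semidirect product Lie triple system $\g\ltimes V$ of Proposition~\ref{semidirect} is a twilled Lie triple system with decomposition $\huaG=\g\oplus V$, and its structure $\Theta$ has only the $(2|0)$-component $\hat\mu_1$ nontrivial, while $\hat\psi=0$ and $\hat\mu_2=0$ (indeed, $V$ is an abelian ideal). Thus the hypotheses of Theorem~\ref{MC} are met.

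Next, feed this $\Theta=\hat\mu_1$ into the $L_\infty$-structure built in Theorem~\ref{Linfty}. Because $\hat\mu_2=0$ and $\hat\psi=0$, one immediately has $l_1=0$ and $l_2=0$, whereas the only surviving bracket is
\[
l_3(f,g,h)=[[[\hat\mu_1,\hat f]_{\mathsf{LTS}},\hat g]_{\mathsf{LTS}},\hat h]_{\mathsf{LTS}}.
\]
Applied to $T\in C^0_{\mathsf{LTS}}(V,\g)$, Theorem~\ref{MC} then says that $((\g\oplus V,\Theta^T),\g,V)$ is a twilled Lie triple system if and only if the single condition $\tfrac{1}{3!}l_3(T,T,T)=0$ holds, i.e.\ $l_3(T,T,T)=0$.

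The remaining and main step is a direct calculation of $l_3(T,T,T)(u,v,w)$ using formula~\eqref{control} for the Nambu composition in the controlling algebra, together with the definitions of $\hat\mu_1$ and of the lift $\hat T$ (namely $\hat T(x,u)=(T(u),0)$). Since $\hat T\circ\hat T=0$, most iterated compositions collapse; the surviving contributions assemble, after collecting the combinatorial factors produced by the three successive brackets, into a constant multiple of
\[
\Courant{Tu,Tv,Tw}-T\bigl(D(Tu,Tv)w+\rho(Tv,Tw)u-\rho(Tu,Tw)v\bigr).
\]
Thus $l_3(T,T,T)=0$ is equivalent to the relative Rota--Baxter identity, completing the proof.

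The main obstacle I anticipate is purely bookkeeping: tracking the signs, positions, and shuffle coefficients in~\eqref{control} for a triple nested bracket, and making sure the three insertions of $\hat T$ into the three slots of $\hat\mu_1$ produce exactly the three terms $D(Tu,Tv)w$, $\rho(Tv,Tw)u$, $-\rho(Tu,Tw)v$ on the inside and the single term $\Courant{Tu,Tv,Tw}$ on the outside, with matching normalisation. Once that combinatorial accounting is carried out, the equivalence is immediate.
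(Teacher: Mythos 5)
Your proposal is correct and follows essentially the same route as the paper: both specialise the twisting machinery to the semidirect product $\g\ltimes_\rho V$ (where $\hat\psi=\hat\mu_2=\hat\phi_1=\hat\phi_2=0$), reduce the twilled condition to the single equation $\tfrac{1}{6}X_{\hat T}^3(\hat\mu_1)=l_3(T,T,T)/3!=0$ (the paper via Theorem~\ref{twilledtwisting}, you via the equivalent Theorem~\ref{MC}), and identify this with the relative Rota--Baxter identity. The final unpacking of $l_3(T,T,T)$ that you defer to a "direct calculation" is likewise not carried out in the paper, which instead cites the known realisation of relative Rota--Baxter operators as Maurer--Cartan elements of $(C^*(V,\g),l_3)$.
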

\begin{proof}
Since the semidirect product $((\g\ltimes V,\Theta),\g,V)$ is a strict Lie triple system, by Theorem \ref{twilledtwisting}, the twisting $\Theta^T$ has the structures as follows:
\begin{eqnarray*}
\left\{\begin{array}{rcl}
\hat\mu_1^T&=&\hat\mu_1,\\
\hat\psi^T&=&X_{\hat T}(\hat\mu_1),\\
\hat\mu_2^T&=&\half X_{\hat T}^2(\hat\mu_1),\\
\hat\phi_2^T&=&\frac{1}{6}X_{\hat T}^3(\hat\mu_1).
\end{array}\right.
\end{eqnarray*}
Thus $((\g\oplus V,\Theta^T),\g,V)$ is a twilled Lie triple system if and only if $\hat\phi_2^T=0$, which is equivalent to that $T$ is a relative Rota-Baxter operator.
\end{proof}

\begin{cor}
Let $T:V\longrightarrow\g$ be a relative Rota-Baxter operator on a Lie triple system $(\g,\Courant{\cdot,\cdot,\cdot})$ with respect to a representation $(V;\rho)$. Then $\hat\mu_2^T$ defines a Lie triple system structure $\Courant{\cdot,\cdot,\cdot}_{T}$ on $V$ and a representation $\varrho:\ot^2V\longrightarrow\gl(\g)$ as follows:
\begin{eqnarray*}
\Courant{u,v,w}_T&=&D(Tu,Tv)w+\rho(Tv,Tw)u-\rho(Tu,Tw)v,\\
\varrho(u,v)x&=&\Courant{x,Tu,Tv}-T\Big(D(x,Tu)v-\rho(x,Tv)u\Big), \quad \forall x\in \g,~u,v,w\in V.
\end{eqnarray*}
\end{cor}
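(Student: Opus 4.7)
My strategy is to avoid re-verifying the Lie triple system axioms and the representation axioms from scratch; instead, I will package the relative Rota-Baxter condition into a twilled Lie triple system and then read the two required structures directly off the general machinery developed earlier in the paper.

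\textbf{Step 1 (pass to a twilled structure).}  Since $T:V\to\g$ is a relative Rota-Baxter operator, Proposition \ref{Otwisting} gives that $((\g\oplus V,\Theta^T),\g,V)$ is a twilled Lie triple system, where $\Theta=\hat\mu_1$ is the semidirect product structure $\g\ltimes V$.  Because all other components of the initial $\Theta$ vanish ($\hat\phi_1=\hat\psi=\hat\mu_2=\hat\phi_2=0$), Theorem \ref{twilledtwisting} collapses to
\begin{eqnarray*}
\hat\mu_1^T=\hat\mu_1,\qquad \hat\psi^T=X_{\hat T}(\hat\mu_1),\qquad \hat\mu_2^T=\tfrac12 X_{\hat T}^2(\hat\mu_1),\qquad \hat\phi_2^T=\tfrac16 X_{\hat T}^3(\hat\mu_1),
\end{eqnarray*}
and the twilled condition $\hat\phi_2^T=0$ is precisely the relative Rota-Baxter identity.

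\textbf{Step 2 (extract the abstract structures).}  Apply the remark following Proposition \ref{pro:twilled} to the twilled system $((\g\oplus V,\Theta^T),\g,V)$ with the roles $\g_1=\g$ and $\g_2=V$.  This remark tells us that the restriction of $\hat\mu_2^T$ to inputs from $V$ automatically defines a Lie triple system structure on $V$, and simultaneously that the formula $\varrho(u,v)x:=\hat\mu_2^T(x,u,v)$ gives a representation $(\g;\varrho)$ of this new Lie triple system $V$.  Thus \emph{both} claims in the corollary (that $\Courant{\cdot,\cdot,\cdot}_T$ is a Lie triple system on $V$ and that $\varrho$ is a representation) are free consequences once we identify $\hat\mu_2^T$; no bracket identities need to be re-checked by hand.

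\textbf{Step 3 (unfold the explicit formulas).}  It remains to confirm that the $\hat\mu_2^T$ furnished above matches the two formulas in the statement.  Using the expression
\begin{eqnarray*}
\tfrac12 X_{\hat T}^2(\hat\mu_1)
&=& \hat\mu_1\!\circ\!(\hat T\ot\hat T\ot\Id)+\hat\mu_1\!\circ\!(\hat T\ot\Id\ot\hat T)+\hat\mu_1\!\circ\!(\Id\ot\hat T\ot\hat T)\\
&&-\hat T\!\circ\!\hat\mu_1\!\circ\!(\hat T\ot\Id\ot\Id)-\hat T\!\circ\!\hat\mu_1\!\circ\!(\Id\ot\hat T\ot\Id)-\hat T\!\circ\!\hat\mu_1\!\circ\!(\Id\ot\Id\ot\hat T),
\end{eqnarray*}
derived inside the proof of Lemma \ref{twit}, I will evaluate on two specific configurations.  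On $((0,u),(0,v),(0,w))$ the three $\hat T\!\circ\!\hat\mu_1$-terms vanish (every inner argument $\hat\mu_1(\cdots)$ lies in $V$ but already has a zero $\g$-component in the needed slot), and the first three terms reproduce $D(Tu,Tv)w+\rho(Tv,Tw)u-\rho(Tu,Tw)v$, giving $\Courant{u,v,w}_T$.  On $((x,0),(0,u),(0,v))$ only the term with $\hat T$ applied to the last two slots survives among the first three (yielding $\Courant{x,Tu,Tv}$), while the two $\hat T\!\circ\!\hat\mu_1$ terms that act nontrivially contribute $-T(D(x,Tu)v)$ and $T(\rho(x,Tv)u)$, producing exactly $\varrho(u,v)x$.

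\textbf{Main obstacle.}  There is no conceptual obstruction: the abstract Lie-triple-system and representation axioms come for free from Step 2.  The one point that requires care is the bookkeeping in Step 3, since $\hat T$ is a lift (so $\hat T(x,u)=(Tu,0)$) and only inputs that actually sit in the $V$-slot produce nonzero contributions after application of $\hat T$; keeping track of which of the six terms in $\tfrac12 X_{\hat T}^2(\hat\mu_1)$ survive on each configuration is essentially the whole computational content of the proof.
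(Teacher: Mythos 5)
Your proposal is correct and follows essentially the same route as the paper: invoke Proposition \ref{Otwisting} to recognize $((\g\oplus V,\Theta^T),\g,V)$ as a twilled Lie triple system, then read the Lie triple system structure on $V$ and the representation on $\g$ off the component $\hat\mu_2^T=\tfrac12 X_{\hat T}^2(\hat\mu_1)$ via Proposition \ref{pro:twilled} and the remark following it. Your Step 3, unfolding the six terms of $\tfrac12 X_{\hat T}^2(\hat\mu_1)$ on the two input configurations, supplies the explicit matching with the stated formulas that the paper leaves implicit, and your computations there are accurate.
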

\begin{proof}
By Proposition \ref{Otwisting}, $((\g\oplus V,\Theta^T),\g,V)$ is a twilled Lie triple system, which implies that $\hat\mu_2^T$ defines a Lie triple system structure on $V$. Moreover, for all $x\in \g$ and $u,v,w\in V$, define
$$\Courant{u,v,w}_T:=\hat\mu_2^T(u,v,w), \quad \varrho(u,v)x:=\hat\mu_2^T(u,v,x),$$
which give a Lie triple system structure on $ V$ and a representation of $(V,\Courant{\cdot,\cdot,\cdot}_{T})$ on $\g$.
\end{proof}

At the end of this paper, we need a proposition to give some examples. Before this, we introduce some notations. Let $(\g,\Courant{\cdot,\cdot,\cdot})$ be a Lie triple system, $(V;\rho)$ a representation and $T:V\longrightarrow\g$ a relative Rota-Baxter operator with respect to $(V;\rho)$. Let $\Theta$ denote the operation of twilled Lie triple system $(\g\ltimes V,\g,V)$. For all $x,y\in \g$ and $u,v\in V$, define $\tau:\g\longrightarrow\Hom(\ot^2V,V)$ and $\sigma:V\longrightarrow\Hom(\ot^2\g,\g)$ to be
\begin{eqnarray*}
\tau(x)(u,v)&=&\rho(Tv,x)u+\rho(x,Tu)v,\\
\sigma(u)(x,y)&=&\Courant{Tu,x,y}-T\Big(\rho(x,y)u\Big)
\end{eqnarray*}
respectively. Then we have the explicit formula of linear maps $\huaD_\tau:\g\longrightarrow\Hom(\wedge^2V,V)$ and $\huaD_\sigma:V\longrightarrow\Hom(\wedge^2\g,\g)$ as follows:
\begin{eqnarray*}
\huaD_\tau(x)(u,v)&=&D(x,Tu)v+D(Tv,x)u,\\
\huaD_\sigma(u)(x,y)&=&\Courant{x,y,Tu}-T\Big(D(x,y)u\Big),\quad \forall x,y\in \g,~u,v\in V.
\end{eqnarray*}
\begin{pro}
With the above notations, the twisting of $\Theta$ is given by
\begin{eqnarray}
~ &&\Theta^T\Big((x,u),(y,v),(z,w)\Big)\label{Ttwisting}\\
~ \nonumber&=&\Courant{x,y,z}+D_\varrho(u,v)z+\varrho(v,w)x-\varrho(u,w)y+\huaD_\sigma(w)(x,y)+\sigma(u)(y,z)-\sigma(v)(x,z)\\
~\nonumber &&+\Courant{u,v,w}_T+D_\rho(x,y)w+\rho(y,z)u-\rho(x,z)v+\huaD_\tau(z)(u,v)+\tau(x)(v,w)-\tau(y)(u,w).
\end{eqnarray}
\end{pro}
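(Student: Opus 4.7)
The plan is to invoke Theorem \ref{twilledtwisting} applied to the strict twilled Lie triple system $(\g \ltimes V, \g, V)$ whose structure is $\Theta = \hat\mu_1$ (so $\hat\phi_1 = \hat\psi = \hat\mu_2 = \hat\phi_2 = 0$), and then to identify each $0|2$, $1|1$, $2|0$, $3|-1$ component of $\Theta^T = \hat\mu_1 + X_{\hat T}(\hat\mu_1) + \tfrac{1}{2} X_{\hat T}^2(\hat\mu_1) + \tfrac{1}{6} X_{\hat T}^3(\hat\mu_1)$ with the terms appearing on the right-hand side of \eqref{Ttwisting}. Since $T$ is a relative Rota-Baxter operator, Proposition \ref{Otwisting} guarantees $\hat\phi_2^T = \tfrac{1}{6} X_{\hat T}^3(\hat\mu_1) = 0$, so only the first three terms of the series contribute.

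First, I would rewrite the specialization of Theorem \ref{twilledtwisting} in the strict case, obtaining
\[
\hat\mu_1^T = \hat\mu_1,\quad \hat\psi^T = X_{\hat T}(\hat\mu_1),\quad \hat\mu_2^T = \tfrac{1}{2} X_{\hat T}^2(\hat\mu_1),\quad \hat\phi_2^T = 0,
\]
so that $\Theta^T\big((x,u),(y,v),(z,w)\big)$ decomposes into four bidegree-homogeneous blocks. The bidegree $2|0$ block, coming from $\hat\mu_1^T = \hat\mu_1$, reproduces $\llbracket x,y,z\rrbracket + D_\rho(x,y)w + \rho(y,z)u - \rho(x,z)v$, which is exactly the semidirect-product expression already present in the statement.

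Next, I would expand $X_{\hat T}(\hat\mu_1) = [\hat\mu_1, \hat T]_{\mathsf{LTS}}$ using the formula \eqref{1order}, namely
\[
X_{\hat T}(\hat\mu_1) = \hat\mu_1 \circ (\hat T \otimes \mathrm{Id} \otimes \mathrm{Id}) + \hat\mu_1 \circ (\mathrm{Id} \otimes \hat T \otimes \mathrm{Id}) + \hat\mu_1 \circ (\mathrm{Id} \otimes \mathrm{Id} \otimes \hat T) - \hat T \circ \hat\mu_1.
\]
Evaluating on triples $((x,u),(y,v),(z,w))$ and projecting to $\g$ and $V$ respectively gives two pieces. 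The $\g$-component involves $\llbracket Tu, y, z \rrbracket - T(\rho(y,z)u)$ and its cyclic analogues, which by the definition of $\sigma$ assemble into $\huaD_\sigma(w)(x,y) + \sigma(u)(y,z) - \sigma(v)(x,z)$; the $V$-component similarly matches $\huaD_\tau(z)(u,v) + \tau(x)(v,w) - \tau(y)(u,w)$ after recognizing $\tau(x)(u,v) = \rho(Tv,x)u + \rho(x,Tu)v$ and using $\huaD_\tau(x)(u,v) = \tau(x)(v,u) - \tau(x)(u,v)$.

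Then I would expand $\tfrac{1}{2} X_{\hat T}^2(\hat\mu_1)$ via the analogue of \eqref{1order} at the next order; because $\hat T \circ \hat T = 0$, only terms with $\hat T$ acting on two of the three inputs (and possibly composed on the outside) survive. The $V$-component of this block reproduces $\llbracket u,v,w\rrbracket_T = D_\rho(Tu,Tv)w + \rho(Tv,Tw)u - \rho(Tu,Tw)v$, exactly the bracket defined earlier on $V$. The $\g$-component, after collecting the three terms $\llbracket x, Tv, Tw\rrbracket - T(D_\rho(x,Tv)w - \rho(x,Tw)v)$ and permutations, matches $D_\varrho(u,v)z + \varrho(v,w)x - \varrho(u,w)y$ via the definition $\varrho(u,v)x = \llbracket x, Tu, Tv\rrbracket - T(D_\rho(x,Tu)v - \rho(x,Tv)u)$. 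The main obstacle is purely bookkeeping: one must track signs, the graded-Lie-bracket conventions encoded in $[\cdot,\cdot]_{\mathsf{N}}$ via \eqref{control}, and the skew/cyclic symmetries built into $C^1_{\mathsf{LTS}}$, and then verify the identifications $\huaD_\sigma$, $\huaD_\tau$, $\varrho$ are consistent with $D_\tau$ on $V$-pairs via \eqref{huaD}. Once these identifications are made, adding the four blocks gives precisely \eqref{Ttwisting}.
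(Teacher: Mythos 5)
The paper states this proposition without proof, so your outline has to be measured against what a direct verification actually yields. Your strategy --- specialize Theorem \ref{twilledtwisting} to the strict twilled system $(\g\ltimes V,\g,V)$ with $\Theta=\hat\mu_1$, kill $\hat\phi_2^T=\tfrac16 X_{\hat T}^3(\hat\mu_1)$ by Proposition \ref{Otwisting}, and read off the bidegree blocks of $\hat\mu_1+X_{\hat T}(\hat\mu_1)+\tfrac12 X_{\hat T}^2(\hat\mu_1)$ --- is the right one, and three of your four identifications do check out: the $2|0$ block is the semidirect product expression, the $\g$-valued part of the $1|1$ block assembles into $\huaD_\sigma(w)(x,y)+\sigma(u)(y,z)-\sigma(v)(x,z)$ (using $\Courant{x,Tv,z}=-\Courant{Tv,x,z}$ and the cyclic identity for $\huaD_\sigma$), and the $0|2$ block gives exactly $\Courant{u,v,w}_T$ and the $\varrho$-terms.

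The step that fails is the assertion that the $V$-valued part of $X_{\hat T}(\hat\mu_1)$ ``matches $\huaD_\tau(z)(u,v)+\tau(x)(v,w)-\tau(y)(u,w)$ after recognizing $\tau(x)(u,v)=\rho(Tv,x)u+\rho(x,Tu)v$.'' Evaluating \eqref{1order} on $\big((x,0),(0,v),(0,w)\big)$, the terms $\hat\mu_1\circ(\hat T\ot\Id\ot\Id)$ and $\hat T\circ\hat\mu_1$ vanish and one is left with
\[
X_{\hat T}(\hat\mu_1)\big((x,0),(0,v),(0,w)\big)=\hat\mu_1\big((x,0),(Tv,0),(0,w)\big)+\hat\mu_1\big((x,0),(0,v),(Tw,0)\big)=\big(0,\;D(x,Tv)w-\rho(x,Tw)v\big),
\]
so the coefficient that actually appears is $\tau(x)(u,v)=D(x,Tu)v-\rho(x,Tv)u$, and correspondingly $\huaD_\tau(z)(u,v)=\rho(Tv,z)u-\rho(Tu,z)v$. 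This is not the displayed $\rho(Tv,x)u+\rho(x,Tu)v$, and the two trilinear expressions are not equal for a general representation. So either the paper's definition of $\tau$ (and hence of $\huaD_\tau$) is a typo --- which is strongly suggested by the fact that $D(x,Tu)v-\rho(x,Tv)u$ is precisely the expression inside $T(\cdot)$ in the definition of $\varrho$, mirroring how $\sigma(u)(x,y)=\Courant{Tu,x,y}-T(\rho(x,y)u)$ pairs with the $\g$-component --- or the proposition as literally stated is false. Either way, your proposal asserts an identification that does not survive the computation you defer to ``bookkeeping''; this block must be computed explicitly and $\tau$ corrected (or the discrepancy flagged) for the proof to close.
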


Note that although $((\g\oplus V,\Theta^T),\g,V)$ is a twilled Lie triple system, however nether is $(V;(\rho,\tau))$ a generalized representation of $\g$, nor $(\g;(\varrho,\sigma))$ is a generalized representation of $(V,\Courant{\cdot,\cdot,\cdot}_T)$ in general. Consequently, the quadruple $(\g,V;(\rho,\tau),(\varrho,\sigma))$ does not form a generalized matched pair any more.

\begin{ex}
Let $(\g,\Courant{\cdot,\cdot,\cdot})$ be a 2-dimension Lie triple system with a basis $\{e_1,e_2\}$. Define a nonzero operation with respect to the basis $\{e_1,e_2\}$ to be
$$\Courant{e_1,e_2,e_2}=e_1.$$
Then
\begin{eqnarray*}
T=
\begin{pmatrix}
0 & a\\
0 & b
\end{pmatrix}
\end{eqnarray*}
is a Rota-Baxter operator on $\g$. Then $((\g\oplus\g_T,\Theta^T),\g,\g_T)$ is a 4-dimensional Lie triple system with a basis $\{\bm{e}_1,\bm{e}_2,\bm{e}_3,\bm{e}_4\}$, where $\bm{e}_1=(e_1,0),~\bm{e}_2=(e_2,0),\bm{e}_3=(0,e_1),~\bm{e}_4=(0,e_2)$. By Eq. \eqref{Ttwisting}, the twisting $\Theta^T$ is given by
\begin{eqnarray*}
\Theta^T({\bm e}_1,{\bm e}_3,{\bm e}_2)=-{\bm e}_3,\quad \Theta^T(\bm{e}_2,\bm{e}_3,\bm{e}_4)=b\bm{e}_3,\quad \Theta^T(\bm{e}_2,\bm{e}_4,\bm{e}_3)=b\bm{e}_3,\\
\Theta^T(\bm{e}_1,\bm{e}_2,\bm{e}_4)=b\bm{e}_1+\bm{e}_3,\qquad \Theta^T(\bm{e}_1,\bm{e}_4,\bm{e}_2)=b\bm{e}_1+\bm{e}_3.
\end{eqnarray*}
\end{ex}

\begin{ex}
Let $(\g,\Courant{\cdot,\cdot,\cdot})$ be a 4-dimension Lie triple system with a basis $\{e_1,e_2,e_3,e_4\}$. Define a nonzero operation with respect to the basis $\{e_1,e_2,e_3,e_4\}$ to be
$$\Courant{e_1,e_2,e_1}=e_4.$$
Take
\begin{eqnarray*}
T=
\begin{pmatrix}
0 & 1 & 0 & 0\\
0 & 0 & 0 & 0\\
0 & 0 & 1 & 0\\
0 & 0 & 0 & 1
\end{pmatrix},
\end{eqnarray*}
then $T:\g\longrightarrow\g$ is a Rota-Baxter operator. Consequently, the twisting of Lie triple system $(\g\oplus\g_T,\g,\g_T)$ has a basis $\{\bm{ e}_1,\bm{e}_2\cdots,\bm{e}_8\}$, where $\bm{e}_i=(e_i,0), 1\leqslant i\leqslant 4$ and $\bm{e}_j=(0,e_{j-4}), 5\leqslant j\leqslant 8$. Then the twisting $\Theta^T$ is given by
\begin{eqnarray*}
\Theta^T(\bm{e}_1,\bm{e}_2,\bm{e}_1)&=&\bm{e}_4, \quad \Theta^T(\bm{e}_1,\bm{e}_2,\bm{e}_6)=\bm{e}_4, \quad \Theta^T(\bm{e}_2,\bm{e}_6,\bm{e}_1)=\bm{e}_4,\quad \Theta^T(\bm{e}_1,\bm{e}_2,\bm{e}_5)=\bm{e}_6,\\
\Theta^T(\bm{e}_2,\bm{e}_5,\bm{e}_1)&=&-\bm{e}_6, \quad \Theta^T(\bm{e}_2,\bm{e}_6,\bm{e}_5)=\bm{e}_6,\quad \Theta^T(\bm{e}_5,\bm{e}_6,\bm{e}_2)=\bm{e}_6,\quad \Theta^T(\bm{e}_1,\bm{e}_6,\bm{e}_1)=\bm{e}_4+\bm{e}_6.
\end{eqnarray*}
\end{ex}
\emptycomment{
\vspace{3mm}
\noindent{\bf Conclusion.}
In \cite{ZQ}, where Lie-Yamaguti bialgebra theory was established, it was found that a classical Lie-Yamaguti $r$-matrix does {\em not} give rise to a double Lie-Yamaguti bialgebra structure. Also, the same phenomenon also happens when Lie-Yamaguti  algebra restricted to the context of Lie triple systems. By Theorem \ref{strict}, Theorem \ref{usual} and Proposition \ref{Otwisting}, in contrast to binary cases such as Lie algebras or Leibniz algebras, we see that a relative Rota-Baxter operator does {\em not} correspondence to a usual matched pairs of Lie triple systems. These results explains why the classical $r$-matrix (treated as a relative Rota-Baxter operators with respect to the coadjoint representation) of Lie triple systems does not give rise to a double bialgebra of Lie triple system. Anyway, we use the following diagrams to indicate the relations among relative Rota-Baxter operators, matched pairs and twilled Lie triple systems.}

\end{document}